\documentclass[12pt, english, a4paper]{article}
\usepackage[latin9]{inputenc}
\usepackage[T1]{fontenc}
\usepackage{graphicx}
\usepackage{amsthm}
\usepackage{amsfonts,dsfont}
\usepackage{circledsteps}
\usepackage{mathtools}
\allowdisplaybreaks
\usepackage{amssymb}
\usepackage{makecell}
\usepackage{comment}
\usepackage{booktabs}
\usepackage{authblk}
\usepackage{cases}
\usepackage{hyperref}
\usepackage{mathrsfs}
\usepackage{fullpage}
\usepackage{times}
\usepackage{soul}
\usepackage{booktabs}
\usepackage[dvipsnames]{xcolor}
\usepackage{hyperref}
\usepackage{tikz,tikz-cd}
\usetikzlibrary{decorations.text}

\usetikzlibrary{arrows.meta,positioning}
\usepackage{extarrows}
\usepackage{float}
\usepackage{ytableau}
\usepackage{xcolor}
\usepackage{circledsteps}
\usepackage{multirow}
\usepackage{bm}

\usepackage{float}
\usepackage{tikz}
\usepackage{dsfont}
\usetikzlibrary{decorations.text}

\newcommand{\red}[1]{\textcolor{red}{#1}}

\theoremstyle{plain}
\newtheorem{theorem}{Theorem}
\newtheorem{note}{Note}
\newtheorem{corollary}[theorem]{Corollary}
\newtheorem{lemma}[theorem]{Lemma}

\theoremstyle{definition}
\newtheorem{definition}[theorem]{Definition}
\newtheorem{example}{Example}

\newtheorem{question}{Question}

\theoremstyle{remark}

\usepackage{authblk}

\title{\textbf{{Asymmetric  Catalan and Shi Hyperplane Arrangements}}}

\author[1]{{\small Sucharita Biswas}\thanks{\tt{biswas.sucharita56@gmail.com}}}
\affil[1]{{\small Department of Mathematics, Indian Institute of Technology Bombay, Mumbai 400076, India}}

\author[2]{{\small Shubhanshu Prasad}\thanks{\tt{shubhanshup@iisc.ac.in}}}
\affil[2]{{\small Department of Mathematics, Indian Institute of Science, Bengaluru 560012, India}}

\author[3]{{\small Shushma Rani}\thanks{\tt{shushmarani@iitrpr.ac.in}}}
\affil[3]{{\small Department of Mathematics, Indian Institute of Technology Ropar, Punjab 140001, India}}

\date{}
\begin{document}

\maketitle

% \begin{abstract}
%     Motivated by a problem proposed by Theo Douvropoulos during the 2022 Oberwolfach problem session on Enumerative Combinatorics, we introduce and study generalized asymmetric Catalan and Shi arrangements. We define two natural families, namely the first and second kinds, and compute their characteristic polynomials, thereby obtaining explicit formulas for the number of regions. To provide combinatorial interpretations of these region counts, we introduce two new classes of combinatorial objects. For the asymmetric Catalan arrangements, we define $\mathbf{c}$-labelled Dyck paths, which generalize classical Dyck paths, and establish explicit bijections between these paths and the regions of the arrangements. For the asymmetric Shi arrangements, we introduce $\mathbf{c}$-building functions, a natural generalization of parking functions motivated by a dynamic skyscraper construction model, and prove that they are in bijection with the regions of the corresponding arrangements. As special cases, our constructions recover the classical $k$-Catalan and $k$-Shi arrangements. In particular, our results extend Stanley's celebrated bijection between $m$-parking functions and the $m$-Shi arrangement through new combinatorial models and entirely different proof techniques.
% \end{abstract}

\begin{abstract}
In this paper, we study the regions of asymmetric extensions of the Catalan and Shi arrangements. We first determine the characteristic polynomials using the finite field method, thereby obtaining explicit formulas for the number of regions. We then introduce two new classes of combinatorial objects: $\mathbf{c}$-labelled Dyck paths and $\mathbf{c}$-building functions, which generalize classical Dyck paths and parking functions, respectively. We prove that the regions of the asymmetric $\mathbf{c}$-Catalan arrangement are in natural bijection with $\mathbf{c}$-labelled Dyck paths, while those of the asymmetric $\mathbf{c}$-Shi arrangement are in natural bijection with $\mathbf{c}$-building functions. This resolves a problem proposed by Theo Douvropoulos during the 2022 Oberwolfach Workshop on Enumerative Combinatorics. In particular, our results extend Stanley's celebrated correspondence between $k$-parking functions and the regions of the $k$-Shi arrangement by introducing new combinatorial models and developing entirely different proof techniques.
\end{abstract}

\textbf{\small{Keyword:}}{\small{ labelled Dyck path, ordered word, building function, parking function, bijection, enumeration } }{\let\thefootnote\relax\footnotetext{2020 \textit{Mathematics Subject Classification}. Primary: 05A19, 52C35.}}

\section{Introduction}

Hyperplane arrangements form an important area of study connecting combinatorics, geometry, topology, algebra, and representation theory. An affine hyperplane arrangement $\mathcal{A}$ is a finite collection of affine hyperplanes in $\mathbb{R}^n$. The combinatorial structure of an arrangement is described by its \emph{intersection poset} $L(\mathcal{A})$, consisting of all nonempty intersections of hyperplanes ordered by reverse inclusion.
%When every hyperplane passes through the origin, the arrangement is called \emph{central}, and its intersection poset is a geometric lattice. 
An arrangement in $\mathbb{R}^n$ is called
\emph{essential} if its rank is $n$.
A region of $\mathcal{A}$ is a connected component of $\displaystyle \mathbb{R}^n\setminus\bigcup_{H \in \mathcal{A}}H$. A region of a hyperplane arrangement is called \emph{bounded} if it is a bounded subset of the ambient space. More generally, for a non-essential arrangement, a region is bounded if its intersection with the affine subspace spanned by the normal vectors of the hyperplanes is bounded, otherwise we call it unbounded.

One of the main goals in the theory of hyperplane arrangements is to study the regions, that is, the connected components of
$\displaystyle \mathbb{R}^n\setminus\bigcup_{H \in \mathcal{A}}H.$
A fundamental invariant of a hyperplane arrangement is its characteristic polynomial,
$$\chi_{\mathcal{A}}(t)=\sum_{x\in L(\mathcal{A})}\mu(\hat{0},x)t^{\dim(x)},$$
where $\mu$ is the M\"obius function of the intersection poset $L(\mathcal{A})$, and $\hat{0}$ denotes its unique minimal element, namely the ambient space $\mathbb{R}^n$, corresponding to the intersection of the empty family of hyperplanes (Definition 2.52, \cite{orlik1992arrangements}). The characteristic polynomial encodes important combinatorial information about the arrangement.

A celebrated theorem of Zaslavsky relates the characteristic polynomial to the number of regions.

\begin{theorem}[Zaslavsky \cite{zaslavsky1975facing}]\label{Zaslavsky}
For an arrangement $\mathcal{A}$ in $\mathbb{R}^n$,
$$r(\mathcal{A})=(-1)^n\chi_{\mathcal{A}}(-1),$$
where $r(\mathcal{A})$ denotes the number of regions. Moreover,
$$b(\mathcal{A})=(-1)^{\operatorname{rank}(\mathcal{A})}\chi_{\mathcal{A}}(1),$$
where $b(\mathcal{A})$ is the number of bounded regions.
\end{theorem}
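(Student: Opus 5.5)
We prove Zaslavsky's theorem by deletion--restriction, with induction on the number of hyperplanes. Fix $H\in\mathcal{A}$. Write $\mathcal{A}'=\mathcal{A}\setminus\{H\}$ for the deletion, an arrangement in $\mathbb{R}^n$. Write $\mathcal{A}''=\{H\cap K : K\in\mathcal{A}',\ H\cap K\neq\emptyset\}$ for the restriction, an arrangement in the affine space $H\cong\mathbb{R}^{n-1}$, with repeated intersections identified. The argument has two parallel recursions, one geometric and one algebraic, which we then match.

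\textbf{Geometric recursion.} Take a region $R$ of $\mathcal{A}'$. Either $H$ misses $R$, and then $R$ is a single region of $\mathcal{A}$, or $H$ meets $R$, and then $H$ cuts $R$ into exactly two regions, since $R$ is convex and open. In the second case $R\cap H$ is a region of $\mathcal{A}''$. Conversely, every region of $\mathcal{A}''$ arises in this way from exactly one region of $\mathcal{A}'$. Hence
\[
r(\mathcal{A})=r(\mathcal{A}')+r(\mathcal{A}'').
\]

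\textbf{Algebraic recursion.} The matching identity is
\[
\chi_{\mathcal{A}}(t)=\chi_{\mathcal{A}'}(t)-\chi_{\mathcal{A}''}(t).
\]
I would derive it from Whitney's formula
\[
\chi_{\mathcal{A}}(t)=\sum_{\mathcal{B}\subseteq\mathcal{A},\ \cap\mathcal{B}\neq\emptyset}(-1)^{|\mathcal{B}|}t^{\dim\cap\mathcal{B}},
\]
which follows from the crosscut theorem, or equivalently from M\"obius inversion over $L(\mathcal{A})$ with the nonempty-intersection condition. Split the sum according to whether $H\in\mathcal{B}$. The subsets avoiding $H$ give exactly $\chi_{\mathcal{A}'}$. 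The subsets containing $H$ give $-\chi_{\mathcal{A}''}(t)$. This last step needs care, because several hyperplanes $K$ can have the same trace $H\cap K$. I would handle it by applying Whitney's formula in $H$ and grouping the $K$'s according to their traces. Within each group, an alternating sum over nonempty subsets of the group collapses to the contribution of a single hyperplane, so the count matches.

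\textbf{Region count.} Substitute $t=-1$ and multiply by $(-1)^n$. The recursion for $\chi$ becomes $(-1)^n\chi_{\mathcal{A}}(-1)=(-1)^n\chi_{\mathcal{A}'}(-1)+(-1)^{n-1}\chi_{\mathcal{A}''}(-1)$, which is the same recursion as for $r$. The base case is the empty arrangement: $\chi(t)=t^n$ and there is one region. The first formula then follows by induction on $|\mathcal{A}|$.

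\textbf{Bounded regions.} This part is the main obstacle. First reduce to the essential case. Project onto the span of the normal vectors. This changes $\chi$ by a factor $t^{n-\operatorname{rank}}$, which equals $1$ at $t=1$, and it does not change the bounded regions.

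For an essential arrangement, the natural move is again deletion--restriction: show that $b(\mathcal{A})=b(\mathcal{A}')+b(\mathcal{A}'')$ when $\operatorname{rank}\mathcal{A}'=\operatorname{rank}\mathcal{A}$. The geometric claim is that bounded regions of $\mathcal{A}$ correspond to bounded regions of $\mathcal{A}'$ together with bounded regions of $\mathcal{A}''$. Cutting an unbounded region of $\mathcal{A}'$ by $H$ can create one bounded piece, and that piece corresponds to a bounded face $R\cap H$ of $\mathcal{A}''$. Making this count rigorous is delicate, so I would instead use the Euler characteristic route.

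\emph{Step 1.} Any nonempty face $F\in L(\mathcal{A})$ has $\chi_{\mathcal{A}_F}(-1)$-type face counts. Summing the Euler characteristics of all faces of the complex gives
\[
\sum_{\text{faces } F}(-1)^{\dim F}=\chi_c(\mathbb{R}^n)=(-1)^n,
\]
using compactly supported Euler characteristic. Restricting to bounded faces instead gives the Euler characteristic of the bounded complex, which is contractible and so equals $1$.

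\emph{Step 2.} Express both face counts through $\mu$. The number of faces of dimension $d$ equals $\sum_{x:\dim x=d}r(\mathcal{A}^x)$. The number of bounded ones equals the analogous sum with $b$ in place of $r$.

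\emph{Step 3.} Apply M\"obius inversion to the identity for bounded faces. This inverts to $b(\mathcal{A})=(-1)^{\operatorname{rank}}\chi_{\mathcal{A}}(1)$.

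Alternatively, one could prove $b$ directly by the deletion--restriction recursion, handling separately the case where $H$ is a coloop. In that case $\operatorname{rank}\mathcal{A}'<\operatorname{rank}\mathcal{A}$, and $b(\mathcal{A}')=0$ is consistent with the signs in the formula.
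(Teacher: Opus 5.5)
The paper does not prove this statement. It is quoted from Zaslavsky, and only the first formula is used, to turn the finite-field computations of $\chi$ into region counts at $q=-1$. So there is nothing in the paper to compare with, and your proposal must stand on its own. The half concerning $r(\mathcal{A})$ does. The recursion $r(\mathcal{A})=r(\mathcal{A}')+r(\mathcal{A}'')$, Whitney's formula, and your grouping of the hyperplanes of $\mathcal{A}'$ by their trace on $H$ (each group contributing $\sum_{\emptyset\neq S\subseteq G}(-1)^{|S|}=-1$) together form the standard complete proof.

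The half concerning $b(\mathcal{A})$ is not yet a proof.
\begin{itemize}
\item Your main route rests on $\sum_{F\text{ bounded}}(-1)^{\dim F}=1$, i.e.\ on the contractibility of the bounded complex. That is a theorem from oriented matroid theory, considerably deeper than the statement being proved. If you use it, cite it explicitly as an external result.
\item The first sentence of Step~1 confuses faces with flats.
\item Step~3 does not follow from a single identity. You need, for every $y\in L(\mathcal{A})$, the identity $\sum_{x\in L(\mathcal{A}),\,x\subseteq y}(-1)^{\dim x}\,b(\mathcal{A}^x)=1$. This in turn needs the easy but unstated fact that each restriction $\mathcal{A}^y$ of an essential arrangement is essential in $y$. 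Only then does upward M\"obius inversion give $(-1)^n b(\mathcal{A})=\sum_{x}\mu(\hat 0,x)=\chi_{\mathcal{A}}(1)$.
\end{itemize}

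The deletion--restriction route you set aside as delicate is in fact short and elementary, and is the better choice. Let $\mathcal{A}$ be essential with $\operatorname{rank}\mathcal{A}'=n$.
\begin{itemize}
\item Then $\mathcal{A}'$ is essential, so no region $R$ of $\mathcal{A}'$ contains a line, and its recession cone is pointed.
\item If $R\cap H$ is nonempty and bounded, this cone meets the direction space $H_0$ of $H$ only in $0$. So, apart from the origin, it lies in one open side of $H_0$. The piece of $R$ on the other side of $H$ is then bounded, and both pieces are bounded if $R$ is.
\item If $R\cap H$ is unbounded, both pieces are unbounded.
\end{itemize}
Hence passing from $\mathcal{A}'$ to $\mathcal{A}$ raises the bounded count by one for each $R$ such that $R\cap H$ is a bounded region of $\mathcal{A}''$, and by zero otherwise. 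This gives $b(\mathcal{A})=b(\mathcal{A}')+b(\mathcal{A}'')$ with $\mathcal{A}''$ essential in $H$, which matches $(-1)^n\chi_{\mathcal{A}}(1)=(-1)^n\chi_{\mathcal{A}'}(1)+(-1)^{n-1}\chi_{\mathcal{A}''}(1)$.

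Your coloop remark needs correcting. When $\operatorname{rank}\mathcal{A}'=n-1$, what holds is:
\begin{itemize}
\item $b(\mathcal{A})=0$, because every region contains a ray in the direction orthogonal to all normals of $\mathcal{A}'$;
\item $\chi_{\mathcal{A}'}(t)=t\,\chi_{\mathcal{A}''}(t)$, so $\chi_{\mathcal{A}}(t)=(t-1)\chi_{\mathcal{A}''}(t)$ vanishes at $t=1$.
\end{itemize}
The claim $b(\mathcal{A}')=0$ is false under the paper's convention for non-essential arrangements. For $\{x=0,\,x=1,\,y=0\}$ with $H=\{y=0\}$, the strip $0<x<1$ is a bounded region of $\mathcal{A}'$, yet $\mathcal{A}$ has no bounded region.
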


While the finite field method \cite{crapo1970foundations, athanasiadis1996characteristic} provides an effective alternate way to compute characteristic polynomials, it does not always make the underlying combinatorial structure apparent. This motivates a more combinatorial approach, that is, to construct explicit bijections between the regions of the arrangement and easily enumerable combinatorial objects, revealing the deep interplay between geometry and combinatorics.

Such connections arise naturally in many important classes of arrangements from combinatorics and Lie theory. For example, the regions of a graphic arrangement \cite{stanley1973acyclic} correspond to the acyclic orientations of a graph, while those of a reflection arrangement \cite{humphreys1990reflection} correspond to the Weyl chambers of the associated Coxeter group. Among these examples, the braid arrangement
$$
\mathcal{A}_{\mathrm{braid}}=\{x_i-x_j=0\mid 1\le i<j\le n\}
$$
is perhaps the most familiar. Its regions are naturally indexed by the permutations of $[n]$, and hence the arrangement has exactly $n!$ regions. An active area of research concerns affine deformations of reflection arrangements obtained by shifting the hyperplanes by integers. There have been many explicit bijections in the literature between the regions of hyperplane arrangements and well-known combinatorial objects such as parking functions, Dyck paths, noncrossing partitions,trees etc.

Some of the most important examples are summarized in Table~\ref{arrangements}.

\begin{table}[H]
\centering
\small
\begin{tabular}{@{}llll@{}}
\toprule
\textbf{Arrangement} & \textbf{Hyperplanes} & \textbf{Number of regions} & \textbf{Combinatorial model} \\
\midrule
Braid \cite{zaslavsky1975facing} & $x_i-x_j=0$ & $n!$ & Permutations \\
Shi \cite{shi1986kazhdan} & $x_i-x_j\in\{0,1\}$ & $(n+1)^{n-1}$ & Parking functions, labelled trees \\

Catalan \cite{postnikov2000deformations} & $x_i-x_j\in\{-1,0,1\}$ & $\displaystyle\frac{n!}{n+1}\binom{2n}{n}$ & Catalan objects \\
Linial \cite{postnikov2000deformations} & $x_i-x_j=1$ &
$\displaystyle\frac1{2^n}\sum_{k=0}^n\binom{n}{k}(k+1)^{n-1}$ &
Alternating trees \\
\bottomrule
\end{tabular}
\caption{Some classical hyperplane arrangements and their combinatorial interpretations.}
\label{arrangements}
\end{table}

Among these, the Shi arrangement, introduced by Shi, has $(1+n)^{n-1}$ regions. Pak and Stanley showed \cite{stanley1996hyperplane} that these regions are in bijection with parking functions and labelled trees. In fact, they proved it for a much more general framework. For a positive integer $k$, they define the generalized Shi arrangement \cite{stanley1998hyperplane} as follow:
\begin{equation}\label{k-shi}
    \mathcal{A}_{\mathrm{Shi}}^k=\{x_i-x_j=s\mid s\in [-k+1, k]\text{ and }1\le i<j\le n  \}.
\end{equation}
They proved that, 
the regions of $\mathcal{A}_{\mathrm{Shi}}^k$ are in bijection with the $k$-Parking function. Consequently, the number of regions is the number of $k$-parking function, $(1+kn)^{n-1}.$

For the Catalan arrangement, each Weyl chamber contributes the same number of regions, given by the Catalan number $C_n$. Thus, the total number of regions is $n!\,C_n=\frac{n!}{n+1}\binom{2n}{n}$. More generally, for a positive integer $k$, the generalized Catalan arrangement is defined by
\begin{equation}\label{k-catalan}
    \mathcal{A}_{\mathrm{Cat}}^k
    =
    \{x_i-x_j=s\mid s\in[-k,k]\text{ and }1\le i<j\le n\}.
\end{equation}
The regions of these arrangements are known to correspond to labelled $(k+1)$-ary trees with $n$ nodes \cite[Section~8.1]{bernardi2018deformations}. These bijective interpretations have since been extended in several directions. For example, Fu, Wang, and Zhu~\cite{fu2021bijections} introduced a cubic matrix associated with each region and used it to construct bijections between regions of the $k$-Shi and $k$-Catalan arrangements and $O$-rooted labelled $k$-trees and pairs of permutations and $k$-Dyck paths, respectively. Duarte and Guedes de Oliveira~\cite{duarte2021pak}, using the Pak-Stanley labeling, characterized the labels of the regions of the $k$-Catalan arrangement and established bijections with $k$-Catalan functions. More broadly, various generalizations and deformations of Weyl arrangements have led to bijections with non-nesting partitions, directed multigraph parking functions, and functional graphs \cite{deshpande2025sketches, bernardi2026bijectivity, bernardi2025deformations}.

Researchers have extensively studied generalizations of these hyperplane arrangements, often finding beautiful bijections between their geometric regions and various combinatorial objects. In recent remarkable work, Bernardi \cite{bernardi2018deformations}  explored deformations of the braid arrangement. For a finite set of integers $S$, he looked at real hyperplane arrangements made up of a finite number of hyperplanes defined as:
\begin{equation*}
    H_{i,j,s} = \{(x_1, \dots, x_n) \in \mathbb{R}^n \mid x_i - x_j = s\},
\end{equation*}
where $1 \le i < j \le n$ and $s \in S$. We call this the $S$-braid arrangement, denoted as $\mathcal{A}_S(n)$. This framework neatly captures many classical examples. For instance, the braid, Catalan, Shi, semiorder, and Linial arrangements correspond to setting $S$ equal to $\{0\}$, $\{-1, 0, 1\}$, $\{0, 1\}$, $\{-1, 1\}$, and $\{1\}$, respectively.

% Building on this, Deshpande and Menon \cite{deshpande2025sketches}  focus on the problem of counting the regions of reflection arrangements and their deformations. Inspired by Bernardi's approach, they show that we can use the concepts of \emph{moves} and \emph{sketches} to create a clear, uniform bijection. Specifically, this bijection directly maps the regions of the Catalan deformation of a reflection arrangement to certain non-nesting partitions.

% Recently, Bernardi and Goregaokar \cite{bernardi2026bijectivity} investigated the generalized Pak-Stanley labeling \cite{stanley1998hyperplane}  for deformations of the braid arrangement. They introduced a specific family of transitive deformations called $(\mathbf{m}, \epsilon)$-arrangements, which generalize the classical $m$-Shi and $m$-Catalan arrangements, and proved that the generalized Pak-Stanley labeling acts as a strict bijection from these regions onto a set of directed multigraph parking functions. 

% Bernardi and Douvropoulos \cite{bernardi2025deformations} studied free Shi-like deformations of restrictions of Weyl arrangements by defining multiplicities through root-theoretic data. In type A, they enumerated the regions of a particular deformation family via a bijection with rooted trees, building on a far-reaching generalization of Joyal's correspondence between labelled trees and functional graphs.  

%the following para is replacement of previous three paras

While previous works have elegantly addressed uniform deformations using algebraic matrices and center-based algorithms, our work expands this landscape by studying asymmetric Catalan and Shi arrangements. More precisely, we consider asymmetric extensions in which the defining hyperplanes are governed by a prescribed tuple $\mathbf{c}$ of positive integers. We first determine the characteristic polynomials of broad classes of these arrangements using the finite field method, obtaining explicit formulas for their numbers of regions. We then develop new combinatorial models for these regions by introducing $\mathbf{c}$-labelled Dyck paths and $\mathbf{c}$-building functions, which generalize classical Dyck paths and parking functions, respectively. Our first main result establishes a bijection between the regions of the asymmetric $\mathbf{c}$-Catalan arrangement and $\mathbf{c}$-labelled Dyck paths.

\begin{theorem}\label{main-bijection-cat1}
There is a bijection between the regions of the asymmetric $\mathbf{c}$-Catalan arrangement and the set of $\mathbf{c}$-labelled Dyck paths.
\end{theorem}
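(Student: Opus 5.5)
The plan is to reduce the count to the Weyl chambers of the braid arrangement and then encode the finer position of a point inside a chamber by a word, which will be read off as a lattice path. I will write the hyperplanes of the asymmetric $\mathbf{c}$-Catalan arrangement as $x_i-x_j=s$, where $s$ runs over an integer interval whose endpoints are determined by the entries of $\mathbf{c}$ attached to $i$ and $j$. In the symmetric case $c_i\equiv k$ this should be the interval $[-k,k]$ of \eqref{k-catalan}; I have not fixed the exact convention and will adapt it to the paper's definition.

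Since $x_i-x_j=0$ belongs to the arrangement, every region $R$ lies in a unique chamber $x_{w_1}<x_{w_2}<\dots<x_{w_n}$ of $\mathcal{A}_{\mathrm{braid}}$. The permutation $w$ will supply the labels of the Dyck path. Within the chamber, a point $x\in R$ is determined up to region by the sign of $x_{w_b}-x_{w_a}-s$ for every $a<b$ and every admissible $s>0$. To record these signs uniformly, I attach to each coordinate $x_i$ its translates $x_i,\,x_i+1,\dots,x_i+c_i$, with $c_i$ replaced by whatever the paper's convention requires. I then read off the \emph{ordered word} obtained by listing these numbers in increasing order and recording which coordinate, and which translate, each one comes from.

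The main steps are as follows.
\begin{enumerate}
\item \textbf{Well-definedness.} For generic $x$ no two of these numbers coincide. Two of them coincide exactly when $x_i-x_j=s$ for some admissible $s$, so the word is constant on each region.
\item \textbf{Injectivity.} The word determines every comparison $x_j+l$ versus $x_i+m$, and hence the side of each hyperplane $x_i-x_j=m-l$. Thus distinct regions give distinct words.
\item \textbf{Path translation.} Scanning the word, I record an up-step, or a block of up-steps weighted by $c_{w_a}$, when a new coordinate $x_{w_a}$ first appears. I record a down-step when a translate of an already opened coordinate appears. The chamber order $w$ labels the up-steps. The constraints that translates of a single coordinate appear in increasing order and never before the coordinate itself give the Dyck (nonnegativity) condition, together with the labelling rules defining a $\mathbf{c}$-labelled Dyck path.
\end{enumerate}

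The hardest step is \textbf{surjectivity}: every $\mathbf{c}$-labelled Dyck path must be realized by a nonempty region. Given the path, I reconstruct the word and then build real numbers $x_{w_1}<\dots<x_{w_n}$ so that the prescribed interleaving of translates holds. I will try a greedy construction. Put $x_{w_1}=0$, and place each successive $x_{w_a}$ inside the open unit-length window forced by the positions of its first translate relative to earlier translates. Because all translates of a coordinate are spaced exactly $1$ apart, I must check that the chosen window is consistent for every translate simultaneously. In other words, the interleaving pattern of $x_{w_a}+t$ among the earlier translates must be the same for all $t$, up to the shift in the word dictated by the path. This is exactly where the precise form of the labelling and Dyck conditions must be used, and I expect to need a small lemma. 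The lemma should state that, for a word coming from a valid path, the positions of consecutive translates of the same letter are compatible with a single real parameter. I would prove it by induction on $n$, removing the last-opened coordinate $x_{w_n}$ and its translates, which gives a valid path for $n-1$.

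Finally, as a sanity check, I will specialize to $c_i\equiv k$ and verify agreement with the known count of $\mathcal{A}^k_{\mathrm{Cat}}$ regions. I will also compare with the region count obtained from the characteristic polynomial via Theorem~\ref{Zaslavsky}, which confirms bijectivity independently.
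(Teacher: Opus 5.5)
You have a genuine gap in the passage from words to paths. The first half of your plan (regions to ordered words, well-definedness, injectivity, and realizing a valid word by a point via induction on the last-opened coordinate) is sound in outline and corresponds to the paper's bijection $\varphi_{\mathbf c}$.

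The problem is that the down-steps of a $\mathbf c$-labelled Dyck path carry no labels. Your step 3 therefore forgets, for every down-step, which coordinate's translate produced it. You prove that distinct regions give distinct words, but never that distinct words give distinct paths. Your surjectivity step also opens with ``given the path, I reconstruct the word'' without saying how. That reconstruction, not the real-number realization you single out, is the combinatorial heart of the theorem.

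The missing ingredient is shift-invariance: $x_i+t<x_j+s$ if and only if $x_i+t-1<x_j+s-1$. Hence the non-initial translates appear in the word in exactly the order of their predecessors. This is the second axiom of a Catalan $\mathbf c$-ordered word; your step 3 invokes only the first axiom, which gives nothing beyond nonnegativity. The paper uses shift-invariance to show that if two valid words first differ at position $\ell$, one of the two letters there is an initial letter $a_i^{(-m_i)}$. So the paths already differ at step $\ell$. Surjectivity onto $\mathcal D(\mathbf c)$ then comes from counting: the cycle lemma gives $|\mathcal D(\mathbf c)|=(C+n)!/(C+1)!$, which equals $r(\mathcal A^{\mathbf c}_{\mathrm{Cat}})=(C+2)\cdots(C+n)$ from the finite field computation. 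For the same reason, your closing claim that Zaslavsky's count ``confirms bijectivity independently'' does not hold as stated. A count settles bijectivity only together with an injection and an independent count of the paths, and you have neither.

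To keep your count-free route, make the reconstruction an explicit first-in-first-out rule. An up-step $U_i$ emits $a_i^{(-m_i)}$. A down-step emits the successor of the earliest emitted non-final letter whose successor has not yet appeared. The height after any prefix equals the number of not-yet-emitted letters of already opened coordinates. Hence the Dyck condition is exactly what guarantees that such a letter exists at every down-step. Since this rule is shift-invariance, the output is a valid word, and it is the only valid word with that path. Combined with your realization lemma, this gives the bijection directly.

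Also, your convention issue is not cosmetic. The paper's arrangement is $x_i-x_j\in[-c_i,c_j]+(m_i-m_j)$, so the translates must be $x_i-m_i+t$ with $0\le t\le c_i$. The up-steps are then labelled by the order of the numbers $x_i-m_i$, not by the braid chamber of $x$. For $\mathbf c=(4,2,1)$, the region marked $A$ in Figure~\ref{fig:catalan_regions} lies in the chamber $x_3<x_1<x_2$, yet its path is $U_1U_3DU_2D^6$. Your description is consistent only after the substitution $y_i=x_i-m_i$.
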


\noindent Similarly, we obtain a bijection between the regions of the asymmetric $\mathbf{c}$-Shi arrangement and $\mathbf{c}$-building functions.

\begin{theorem}\label{main_shi0}
There is a bijection between the regions of the asymmetric $\mathbf{c}$-Shi arrangement and the set of $\mathbf{c}$-building functions.
\end{theorem}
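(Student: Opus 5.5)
The plan is to go through an intermediate encoding of regions by \emph{ordered words}, in the spirit of Bernardi's sketches. I assume the asymmetric $\mathbf{c}$-Shi arrangement consists of hyperplanes $x_i-x_j=s$ with $i<j$ and $s$ ranging over an interval determined by the entries of $\mathbf{c}$ attached to the indices. For a point $x$ in a region $R$, consider the multiset of real numbers
\[\{\,x_i+s \;:\; 1\le i\le n,\ s\in\{0,1,\dots,c_i\}\,\}.\]
Recording the relative order of these numbers, with ties broken by the index convention that makes the Shi inequalities (rather than Catalan ones) visible, gives a word $w(R)$ in the letters $i^{(s)}$. Since $R$ is determined exactly by the signs of $x_i-x_j-s$ for the defining pairs $(i,j,s)$, the first step is to check two things: that $w(R)$ is constant on $R$, and that two regions with the same word coincide. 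This shows that regions inject into words. I then characterize which words occur. The necessary conditions are that the copies of each $i$ appear in increasing order of $s$, plus compatibility conditions coming from translation invariance. Sufficiency is proved by explicitly realizing any such word with a point obtained by a small perturbation of integer coordinates. For the Shi case, the only comparisons that matter are those of $x_i$ against the shifted copies $x_j+s$ with $i<j$. I therefore expect to pass to a canonical representative word: slide letters whose relative order is not constrained by any hyperplane into a normal form, for instance pushing higher-index letters as far left as allowed.

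The second step is to define $\Phi(R)$ as a $\mathbf{c}$-building function by reading the canonical word from left to right, which simulates the skyscraper construction. Each letter $i^{(0)}$ marks the moment building $i$ is placed, and the value $\Phi(R)(i)$ is the number of ``floor'' letters $j^{(s)}$ with $s\ge 1$ preceding it. Equivalently, $\Phi(R)(i)$ is the number of hyperplanes $x_i-x_j=s$ separating $R$ from the base region, in the manner of the Pak--Stanley labelling. I will verify that the resulting function satisfies the defining inequalities of a $\mathbf{c}$-building function, which should be the translation of the word-validity conditions.

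For bijectivity, I would first attempt a direct inverse. Given a building function, process the buildings in increasing order of value, breaking ties by index. At each step insert $i^{(0)}$ at the unique position where exactly $\Phi(i)$ floor letters lie to its left, and then append its floors $i^{(1)},\dots,i^{(c_i)}$ in the forced positions. It then remains to check that the word produced is canonical and valid. If the direct inverse becomes unwieldy, the fallback is to prove only injectivity of $\Phi$ and compare cardinalities. The number of regions is $(-1)^n\chi(-1)$ by Theorem~\ref{Zaslavsky}, with $\chi$ obtained from the finite field computation. The number of $\mathbf{c}$-building functions would be counted by a Pollak-type cyclic argument, placing them on a circle of length $1+\sum_i c_i$ and arguing that exactly one rotation of each orbit is valid.

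I expect the main obstacle to be the normal form in the first step. Unlike the Catalan case, where all relative orders are recorded, the Shi arrangement leaves some comparisons free, so a region corresponds to an equivalence class of words. Choosing a representative that is both canonical and compatible with the left-to-right building reading, uniformly for asymmetric $\mathbf{c}$, is the delicate point. I would first handle $\mathbf{c}=(k,\dots,k)$ and check that the construction recovers Stanley's $k$-parking function bijection. After that I would verify that the sliding rule depends only on the pair $(c_i,c_j)$ for each comparison, so that the argument goes through pair by pair.
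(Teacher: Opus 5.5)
\textbf{The gap is in the definition of $\Phi$, not in the normal form.} Your $\Phi$ counts, before $i^{(0)}$, only the floor letters $j^{(s)}$ with $s\ge 1$, and it does so for \emph{every} other index $j$. It therefore ignores all first letters and is blind to the relative order of $i^{(0)}$ and $j^{(0)}$ whenever no floor letter lies between them. No choice of representative can repair this.

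This already fails in the classical case $\mathbf c=(1,1,1)$, which you planned to check first. The regions $x_3<x_2<x_1<x_3+1$ and $x_3<x_1<x_2<x_3+1$ are distinct Shi regions. Each is a single Catalan region, so there is nothing to slide. Their words are $3^{(0)}2^{(0)}1^{(0)}3^{(1)}2^{(1)}1^{(1)}$ and $3^{(0)}1^{(0)}2^{(0)}3^{(1)}1^{(1)}2^{(1)}$. In both, every first letter precedes every floor letter, so $\Phi=(0,0,0)$ twice.

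The same defect makes your inverse ill-defined. The position of $i^{(0)}$ with exactly $\Phi(i)$ floor letters to its left is not unique, because uncounted letters can sit on either side of it. Your ``equivalently, Pak--Stanley'' description is not equivalent to the word count. Making it precise requires exactly the missing ingredient: a rule saying to which endpoint each separating hyperplane is charged.

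\textbf{What the paper does instead.} It uses an asymmetric set of counted letters. In your normalization (letters $x_i+s$, $0\le s\le c_i$), the paper's arrangement is, up to the translation $x_i\mapsto x_i-m_i$, given by $x_p-x_q\in[-c_p+1,c_q]$ for $p<q$. A Shi move swaps adjacent $p^{(c_p)}$ and $q^{(0)}$ with $p<q$. For index $i$, the paper counts, among the letters preceding $i^{(0)}$:
\begin{itemize}
\item the letters $j^{(0)},\dots,j^{(c_j-1)}$ of each $j<i$ (all but the last);
\item the letters $k^{(1)},\dots,k^{(c_k)}$ of each $k>i$ (all but the first).
\end{itemize}
Equivalently, take each Shi hyperplane $x_p-x_q=s$ ($p<q$) that separates the region from the base region $x_1>\dots>x_n>x_1-1$. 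It is charged to $q$ if $s\le 0$ and to $p$ if $s\ge 1$.

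The two letters swapped by a Shi move are never counted for each other. Hence this statistic is constant on Shi classes, and no normal form is needed even to define the map. Your representative (higher-index first letters pushed left) is also the paper's, but the paper uses it only to prove injectivity. Bijectivity then follows by comparing cardinalities.

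\textbf{Your fallback count is sound} and takes a different route from the paper. The building condition is equivalent to $\sum_{a_i\le h}c_i\ge h+1$ for $0\le h<C$. The cycle lemma with total $-1$ on $\mathbb{Z}/(C+1)$ then gives $(1+C)^{n-1}$ directly. This is more elementary than the paper's matrix-tree and $G$-parking argument.
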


\noindent Together, these results extend the classical correspondence between regions of the Shi arrangement and parking functions to a broader asymmetric setting, while providing new combinatorial objects and bijective constructions.

Our work also resolves a problem posed by Theo Douvropoulos in the \emph{Oberwolfach Problem Session: Enumerative Combinatorics 2022} \cite{williams2022oberwolfach}, Problem 5.2. Given $n$ positive integers $\mathbf{m}=(m_1,\ldots,m_n)$, Douvropoulos considered the arrangements
\begin{equation}\label{theo_cat}
\mathcal{A}^{\mathbf{m}}_{\mathrm{Cat}}=\{x_i-x_j\in\{-m_i-m_j,\ldots,m_i+m_j\}\}
\end{equation}
and
\begin{equation}\label{theo_shi}
\mathcal{A}^{\mathbf{m}}_{\mathrm{Shi}}=\{x_i-x_j\in\{-m_i-m_j+1,\dots,m_i+m_j\}\}.
\end{equation}
He asked for a combinatorial interpretation of the regions of these arrangements. This question motivates our broader framework of asymmetric $\mathbf{c}$-Catalan and $\mathbf{c}$-Shi arrangements, introduced in Sections \ref{sec:ACA} and \ref{sec:ASA}. Our results provide combinatorial interpretations for these arrangements and, in particular, resolve the problem proposed by Douvropoulos; see Notes \ref{theo_cat_sol} and \ref{theo_shi_sol}.

The paper is organized as follows. In Section \ref{sec: generalized cat-shi}, we introduce the asymmetric $\mathbf{c}$-Catalan and $\mathbf{c}$-Shi arrangements and compute their characteristic polynomials. These arrangements generalize the $k$-Catalan and $k$-Shi arrangements, respectively. The main goal of this paper is to identify combinatorial objects that enumerate the regions of these arrangements and to establish explicit bijections between the regions and these objects.

In Section \ref{sec: c dyck}, we introduce $\mathbf{c}$-labelled Dyck paths, a natural generalization of classical Dyck paths in which the up-steps have variable heights $c_i$, with their sequence recorded by a permutation. Alongside this geometric model, we define the generalized Fuss--Catalan number (see Definition \ref{generalized Fuss-Catalan}), which extends the classical Fuss--Catalan sequence to heterogeneous parameters. Using the cycle lemma, we determine the exact number of $\mathbf{c}$-labelled Dyck paths (see Theorem \ref{m dyck}). We then relate these paths to the regions of the asymmetric $\mathbf{c}$-Catalan arrangement through an intermediate class of Catalan $\mathbf{c}$-ordered words. More precisely, we first construct a bijection between the regions of the asymmetric $\mathbf{c}$-Catalan arrangement and Catalan $\mathbf{c}$-ordered words. We next establish a bijection between Catalan $\mathbf{c}$-ordered words and $\mathbf{c}$-labelled Dyck paths. The composition of these two bijections yields the first main theorem.

In Section \ref{sec: c-building}, we introduce a skyscraper construction that leads to the notion of $\mathbf{c}$-building functions, a natural generalization of classical parking functions (see Note \ref{gen of parking}). This construction is motivated by a dynamic skyscraper model in which firms add $c_i$ floors once prescribed height thresholds are reached (see Section \ref{c-building}). Using the matrix-tree theorem, we determine the number of $\mathbf{c}$-building functions (see Theorem \ref{count_building-func}). We then study the regions of the asymmetric $\mathbf{c}$-Shi arrangement in Section \ref{sec:ASA} and construct the corresponding bijection with $\mathbf{c}$-building functions. As in the Catalan case, the construction proceeds through an intermediate class of Shi $\mathbf{c}$-ordered words: we first establish a bijection between the regions of the asymmetric $\mathbf{c}$-Shi arrangement and Shi $\mathbf{c}$-ordered words, and then between Shi $\mathbf{c}$-ordered words and $\mathbf{c}$-building functions. This gives the second main theorem. In particular, our results extend Stanley's bijection \cite{stanley1998hyperplane} between $k$-parking functions and the $k$-Shi arrangement, although our construction and proof are entirely different.

\noindent
\textbf{Notation:}
Throughout, we use $S_n$ to denote the symmetric group of all bijections on
$[n]=\{1,2,\ldots,n\}$. We write permutations in one-line notation. For
integers $m<n$, we use
\[
[m,n]=\{m,m+1,\ldots,n\}
\]
to denote the set of integers from $m$ to $n$. For a set $I$, let
${\mathds{1}}_{I}$ denote its indicator function, defined by
\[
{\mathds{1}}_{I}(i)=
\begin{cases}
1, & \text{if } i\in I,\\
0, & \text{if } i\notin I.
\end{cases}
\]

\section{Characteristic Polynomials}\label{sec: generalized cat-shi}
In this section, we determine the characteristic polynomials for the asymmetric $\mathbf{c}$-Catalan and $\mathbf{c}$-Shi arrangements. Because this polynomial encodes the core topology of a hyperplane arrangement, calculating it allows us to apply Zaslavsky's theorem and directly count the exact number of total and bounded regions. By evaluating these polynomials explicitly, we derive closed product formulas for the region counts across all families of asymmetric arrangements. Beyond simply giving us the enumeration, these formulas lay the groundwork for our investigation and actively motivate the bijective combinatorial models $\mathbf{c}$-labelled Dyck paths and $\mathbf{c}$-building functions that we introduce later in the paper.

\subsection{Characteristic Polynomial of Asymmetric $\mathbf{c}$-Catalan Arrangement}

The classical Catalan arrangement and its $k$-extended counterparts are central to algebraic combinatorics, widely studied for their connections to root systems, Coxeter groups, and Dyck paths. In these classical settings, the bounds on the hyperplane equations $x_i - x_j = k$ are uniform across all pairs. In this section, we move beyond this uniform setting by assigning each coordinate an individual integer parameter from a vector $\mathbf{c} = (c_1, c_2, \dots, c_n) \in \mathbb{Z}_{>0}^n$. By incorporating the parity of each entry, we introduce the \emph{asymmetric $\mathbf{c}$-Catalan arrangement}, which captures asymmetric boundary conditions while generalizing symmetric Catalan deformations. We then compute its characteristic polynomial to obtain an explicit product formula for its total number of regions.

\begin{definition}\label{catalan-kind}
Let $\mathbf{c}=(c_1,c_2,\ldots,c_n)\in\mathbb{Z}_{>0}^n$ be a tuple
of positive integers. For each $i\in[n]$, define  $\mathbf{m}=(m_1,m_2,\ldots , m_n)$ such that
$$
m_i=\begin{cases}
\dfrac{c_i+1}{2}, & \text{if $c_i$ is odd},\\[6pt]
\dfrac{c_i}{2}, & \text{if $c_i$ is even},
\end{cases}
$$
and let $I=\{i\in[n]\mid c_i\text{ is odd}\}$. The \emph{asymmetric $\mathbf{c}$-Catalan arrangement} in $\mathbb{R}^n$ is the hyperplane arrangement
$$
\mathcal{A}_{\mathrm{Cat}}^{\mathbf{c}} = \left\{x_i-x_j=k \;\middle|\; k\in \left[ -m_i-m_j+\mathds{1}_{I}(i),\, m_i+m_j-\mathds{1}_{I}(j)\right]\right\},
$$
where $\mathds{1}_{I}$ denotes the indicator function of $I$.
Equivalently,
$$
\mathcal{A}_{\mathrm{Cat}}^{\mathbf{c}} =\left\{x_i-x_j=k\;\middle|\;
\begin{array}{ll}
i\in I,\ j\notin I, & k\in[-m_i-m_j+1,\,m_i+m_j],\\[4pt]

% \red{i\notin I,\ j\in I,} & k\in[-m_i-m_j,\,m_i+m_j+1],\\[4pt]

i,j\in I, & k\in[-m_i-m_j+1,\,m_i+m_j-1],\\[4pt]
i,j\notin I, & k\in[-m_i-m_j,\,m_i+m_j].
\end{array}
\right\}.
$$
\end{definition}
\noindent Example of the asymmetric $\mathbf{c}$-Catalan arrangement for $\mathbf{c} =(4,2,1)$ is shown in Figure \ref{fig:catalan_regions}.

\begin{theorem}\label{char-catalan}
Let $\mathbf{c}=(c_1,c_2,\dots,c_n)$ be a tuple of positive integers, and let $\displaystyle C=\sum_{i=1}^n c_i$.
Then the characteristic polynomial of the asymmetric $\mathbf{c}$-Catalan arrangement is given by,
$$
\chi(\mathcal{A}^\mathbf{c}_{\mathrm{Cat}},q)=
q\displaystyle\prod_{i=1}^{n-1}(q-C-i)$$
Moreover, the number of regions is
$$
r(\mathcal{A}_{\mathrm{Cat}}^{\mathbf{c}})= \displaystyle\frac{n!}{C+1}\binom{C+n}{n}$$
\end{theorem}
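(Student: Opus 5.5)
We apply the finite field method of Crapo--Rota and Athanasiadis. For a large prime $q$, $\chi(\mathcal{A}^{\mathbf{c}}_{\mathrm{Cat}},q)$ equals the number of points $(x_1,\dots,x_n)\in\mathbb{F}_q^n$ lying on no hyperplane. We encode each coordinate $i$ as an \emph{interval} on the cycle $\mathbb{Z}/q\mathbb{Z}$ and reduce the count to placing disjoint arcs on a circle.

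The encoding uses the split $c_i=m_i+(m_i-\mathds{1}_I(i))$, where both summands are nonnegative. Attach to $x_i$ the block of $c_i+1$ consecutive residues
\[
B_i=\{x_i-m_i+\mathds{1}_I(i),\,\dots,\,x_i+m_i\}.
\]
We claim that $x$ avoids all hyperplanes if and only if the blocks $B_1,\dots,B_n$ are pairwise disjoint in $\mathbb{Z}/q\mathbb{Z}$.

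To check the claim, take $i<j$. The blocks $B_i$ and $B_j$ intersect exactly when $x_i-x_j$ lies in the range
\[
\bigl[-(m_i+m_j)+\mathds{1}_I(i),\;(m_i+m_j)-\mathds{1}_I(j)\bigr],
\]
which is precisely the forbidden set in Definition~\ref{catalan-kind}. This range is symmetric in the roles of $i$ and $j$ up to orientation, so the claim holds for every ordered pair. This verification is the step I expect to need the most care: the asymmetric endpoints $\mathds{1}_I(i)$ and $\mathds{1}_I(j)$ must be matched against the three cases listed in the definition. The case $i\notin I,\ j\in I$ is not listed there but is covered by the general formula, and it gives the range $[-m_i-m_j,\,m_i+m_j-1]$. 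One also needs $q$ large relative to $C$, so that wrap-around does not create spurious intersections.

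It remains to count placements of $n$ labelled, pairwise disjoint arcs of lengths $c_i+1$ on a cycle of length $q$. The arcs occupy $C+n$ points in total, leaving $q-C-n$ free points.
\begin{enumerate}
\item Choose the position of the arc $B_1$: there are $q$ ways.
\item Cut the cycle just after $B_1$, which leaves a line segment of $q-c_1-1$ points.
\item On that segment, arrange the remaining $n-1$ labelled arcs in any order among the $q-C-n$ free points. This gives $(n-1)!\binom{q-C-1}{n-1}=\prod_{i=1}^{n-1}(q-C-i)$ ways, since we are choosing the order of $n-1$ arcs among $q-C-n$ gaps, i.e. $\binom{q-C-n+n-1}{n-1}$ gap patterns times $(n-1)!$ orders.
\end{enumerate}
Hence
\[
\chi(\mathcal{A}^{\mathbf{c}}_{\mathrm{Cat}},q)=q\prod_{i=1}^{n-1}(q-C-i)
\]
for all large primes $q$, and therefore as polynomials.

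Finally, Theorem~\ref{Zaslavsky} gives
\[
r=(-1)^n\chi(-1)=\prod_{i=1}^{n-1}(C+1+i)=\frac{(C+n)!}{(C+1)!}=\frac{n!}{C+1}\binom{C+n}{n}.
\]
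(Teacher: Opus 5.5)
Your route is essentially the paper's: the finite field method on $\mathbb{Z}/q\mathbb{Z}$, with $q$ choices for one coordinate and weak compositions of the $q-C-n$ free points, giving $q\,(n-1)!\binom{q-C-1}{n-1}$. Encoding each coordinate as an arc of $c_i+1$ points is a cleaner version of the paper's minimal gaps $m_{\sigma(r-1)}+m_{\sigma(r)}-\mathds{1}_I(\sigma(r-1))$, and it makes clear that only cyclically consecutive pairs matter.

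However, the step you flagged is false as written. With $B_i=\{x_i-m_i+\mathds{1}_I(i),\dots,x_i+m_i\}$, the arcs $B_i$ and $B_j$ meet iff $x_i-m_i+\mathds{1}_I(i)\le x_j+m_j$ and $x_j-m_j+\mathds{1}_I(j)\le x_i+m_i$, that is,
\[
x_i-x_j\in\bigl[-(m_i+m_j)+\mathds{1}_I(j),\,(m_i+m_j)-\mathds{1}_I(i)\bigr].
\]
The indicators sit on the opposite ends from the forbidden set in Definition~\ref{catalan-kind}, so the two sets differ whenever exactly one of $i,j$ lies in $I$.

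Concretely, take $\mathbf{c}=(1,2)$, so $m_1=m_2=1$ and $I=\{1\}$. The arrangement is $x_1-x_2\in\{-1,0,1,2\}$. But $B_1=\{x_1,x_1+1\}$ and $B_2=\{x_2-1,x_2,x_2+1\}$ meet iff $x_1-x_2\in\{-2,-1,0,1\}$. So the equivalence ``$x$ avoids every hyperplane iff the arcs are disjoint'' fails; your arcs actually encode the reflected arrangement $-\mathcal{A}^{\mathbf{c}}_{\mathrm{Cat}}$.

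The repair is one line. Put the offset on the other side, $B_i=\{x_i-m_i,\dots,x_i+m_i-\mathds{1}_I(i)\}$, which still has $c_i+1$ points. Then $B_i\cap B_j\neq\emptyset$ iff $-m_i-m_j+\mathds{1}_I(i)\le x_i-x_j\le m_i+m_j-\mathds{1}_I(j)$, exactly as required. This is also what the paper's minimal gap after $\alpha_j$ and before $\alpha_i$ encodes: $(m_j-\mathds{1}_I(j))+m_i$, the right arm of $B_j$ plus the left arm of $B_i$.

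Alternatively, keep your arcs and note that $x\mapsto -x$ is a linear isomorphism between $\mathcal{A}^{\mathbf{c}}_{\mathrm{Cat}}$ and its negative, so they share a characteristic polynomial. Your count depends only on the arc lengths $c_i+1$, so the rest goes through unchanged: the cutting argument, the product $\prod_{i=1}^{n-1}(q-C-i)$, and the Zaslavsky evaluation.

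Working with primes $q$ rather than prime powers is the correct choice. The circle picture needs $\mathbb{F}_q=\mathbb{Z}/q\mathbb{Z}$, which the paper's ``prime power'' phrasing glosses over.
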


\begin{proof}
We use the finite field method to compute the characteristic polynomial. Therefore, by selecting a prime power $q$ that is sufficiently large, we have:
$$\begin{aligned}
\chi(\mathcal{A}_{\mathrm{Cat}}^{\mathbf{c}}, q) &= \left| \mathbb{F}_q^n \setminus \mathcal{A}_{\mathrm{Cat}}^{\mathbf{c}} \right| \\
=| \{ (\alpha_1, \alpha_2,& \dots, \alpha_n) \in \mathbb{F}_q^n \mid \alpha_i - \alpha_j \notin [-m_{i}-m_{j}+\mathds{1}_{I}(i),m_i+m_j-\mathds{1}_{I}(j)] \pmod q \}| \\
\text{i.e., the } &\text{cyclic distance between } \alpha_i \text{ and } \alpha_j \text{ is strictly greater than } m_i+m_j-\mathds{1}_{I}(j).
\end{aligned}$$
Equivalently, for $i,j\in [n]$,
$$\alpha_i-\alpha_j>m_{i}+m_{j}-\mathds{1}_I(j),$$

Let us represent the elements of the finite field $\mathbb{F}_q=\{\bar{j}: 0 \leq j<q\}$ as $q$ equally spaced points on a circle. Choose a permutation $\sigma= (\sigma(1), \sigma(2), \dots, \sigma(n)) \in S_n$, which specifies the cyclic order in which we place $(\alpha_1, \alpha_2, \dots, \alpha_n)$ on the circle:
$\alpha_{\sigma(1)}, \alpha_{\sigma(2)}, \dots, \alpha_{\sigma(n)}$. Since the circle is invariant under rotation, we may assume without loss of generality that $\alpha_1$ is placed first, i.e., $\sigma(1)=1$. 
We then recursively place $\alpha_{\sigma(1)},\alpha_{\sigma(2)},\ldots,\alpha_{\sigma(n)}$ on the circle according to the following rules.

\begin{itemize}
    \item Place $\alpha_{\sigma(1)}=\alpha_1$ at any of the $q$ possible positions on the circle.

    \item Suppose that $\alpha_{\sigma(1)},\ldots,\alpha_{\sigma(r-1)}$ have already been placed. 
    To ensure that $\alpha_i-\alpha_j>m_{i}+m_{j}-\mathds{1}_I(j)$ for all $i,j\in [n]$, consecutive elements must be separated by a prescribed minimum number of blank positions. 
    Thus, after $\alpha_{\sigma(r-1)}$, leave
    $$m_{\sigma(r-1)}+m_{\sigma(r)}-\mathds{1}_I(\sigma{(r-1)})+a_{r-1}$$
    blank positions before placing $\alpha_{\sigma(r)}$, where $a_{r-1}\geq 0$ records any additional blank positions beyond the required minimum.

    \item Continue this procedure for $r\in [2,n]$, placing each $\alpha_{\sigma(r)}$ after the required number of blank positions following $\alpha_{\sigma(r-1)}$.

    \item Finally, after $\alpha_{\sigma(n)}$, there must be
    $$m_{\sigma(n)}+m_{\sigma(1)}-\mathds{1}_I(\sigma(n))+a_n$$
    blank positions before returning to $\alpha_{\sigma(1)}=\alpha_1$, where $a_n\geq 0$ accounts for any additional blank positions in the final gap.
\end{itemize}

Thus, once the cyclic order $\sigma$ is fixed, the placement is determined by the initial position of $\alpha_1$ together with the nonnegative integers $a_1,\ldots,a_n$, which record the additional blank positions between consecutive elements.
\medskip

\begin{figure}[H]
    \centering
\begin{tikzpicture}[scale=1.5]

% Radius parameters
\def\r{3}
\def\rr{3.3}

% Points on the circle
\coordinate (xn)  at (150:\r);
\coordinate (x1)  at (90:\r);
\coordinate (x2)  at (20:\r);
\coordinate (x3)  at (-35:\r);
\coordinate (xi)  at (-95:\r);
\coordinate (xip) at (-160:\r);

% Main circle arcs
\draw[thick] (xn)  arc (150:90:\r);
\draw[thick] (x1)  arc (90:20:\r);
\draw[thick] (x2)  arc (20:-35:\r);
\draw[densely dotted,thick] (x3) arc (-35:-95:\r);
\draw[thick] (xi)  arc (-95:-160:\r);
\draw[densely dotted,thick] (xip) arc (-160:-210:\r);

% Vertices
\foreach \p in {xn,x1,x2,x3,xi,xip}
    \fill (\p) circle (1.8pt);

% Vertex Labels
\node[above left] at (xn) {$\alpha_{\sigma(n)}$};
\node[above] at (x1) {$\alpha_{\sigma(1)}$};
\node[right] at (x2) {$\alpha_{\sigma(2)}$};
\node[right] at (x3) {$\alpha_{\sigma(3)}$};
\node[below] at (xi) {$\alpha_{\sigma(i-1)}$};
\node[left] at (xip) {$\alpha_{\sigma(i)}$};

% Red dashed arcs with curved expressions
% Notice how every math component is now wrapped in {}

% Between x_n and x_1 
\draw[red,dashed,thick,postaction={decorate,decoration={text along path,text align=center,raise=1.5ex,
    text={|\tiny| {$m_{\sigma(n)}$} {$+$} {$m_{\sigma(1)}$} {$-$} {$\mathds{1}_{I}(\sigma(n))$} {$+$} {$a_n$}}}}] 
    (145:\rr) arc (145:97:\rr);

% Between x_1 and x_2 
\draw[red,dashed,thick,postaction={decorate,decoration={text along path,text align=center,raise=1.5ex,
    text={|\tiny| {$m_{\sigma(1)}$} {$+$} {$m_{\sigma(2)}$} {$-$} {$\mathds{1}_{I}(\sigma(1))$} {$+$} {$a_1$}}}}] 
    (80:\rr) arc (80:25:\rr);

% Between x_2 and x_3 (Counter-Clockwise)
\draw[red,dashed,thick,postaction={decorate,decoration={text along path,text align=center,raise=-2ex,
    text={|\tiny| {$m_{\sigma(2)}$} {$+$} {$m_{\sigma(3)}$} {$-$} {$\mathds{1}_{I}(\sigma(2))$} {$+$} {$a_2$}}}}] 
    (-26:\rr) arc (-26:14:\rr);

% Between x_i and x_{i+1} (Counter-Clockwise)
\draw[red,dashed,thick,postaction={decorate,decoration={text along path,text align=center,raise=-2ex,
    text={|\tiny| {$m_{\sigma(i-1)}$} {$+$} {$m_{\sigma(i)}$} {$-$} {$\mathds{1}_{I}(\sigma(i-1))$} {$+$} {$a_{i-1}$}}}}] 
    (-157:\rr) arc (-157:-103:\rr);

\end{tikzpicture}
\medskip
\caption{Cyclic placement of points}
\label{fig:cyclic_points}
\end{figure}

%\vspace{0.3cm}
\noindent
The total number of available blank spaces when placing $n$ elements on a circle of $q$ positions is exactly $q - n$. Summing the gaps between all consecutive elements gives:
$$
(m_{\sigma(1)}+m_{\sigma(n)})+\sum_{j=1}^{n-1} \left(m_{\sigma(j)}+m_{\sigma(j+1)}\right) + \sum_{i=1}^n(a_{i}-\mathds{1}_I(\sigma(i)))  = q - n
$$
This simplifies to,
$$
\left(2\sum_{i=1}^nm_i-|I|\right) + \sum_{j=1}^n a_j=C+ \sum_{j=1}^n a_j= q - n 
$$
Therefore , 
$$  
\sum_{j=1}^n a_j = q-n-C.
$$
Hence, finding a valid spacing configuration is equivalent to finding the number of non-negative integer solutions $(a_1, \dots, a_n)$ to the equation above. By the standard formula for weak compositions, the number of such solutions is:
$$ 
\binom{(q - C - n) + n - 1}{n - 1} = \binom{q - C - 1}{n - 1}. 
$$

Thus, for a specific cyclic ordering (permutation $\sigma$), the total number of valid ways to place $(\alpha_1,  \dots, \alpha_n)$ on the circle is:
$$ 
q \binom{q - C - 1}{n - 1}. 
$$
To find the total number of valid choices for the coordinates $(\alpha_1, \dots, \alpha_n)$, we must sum over all valid permutations where $\sigma(1)=1$. There are $(n - 1)!$ such cyclic permutations. 
% $$ 
% \sum\limits_{\{\sigma \in S_n\mid \sigma(1)=1\}} q \binom{q - C - 1}{n - 1} 
% $$
Therefore,
\begin{align*}
\chi(\mathcal{A}_{\mathrm{Cat}}^{\mathbf{c}}, q) &=  \sum\limits_{\{\sigma \in S_n\mid \sigma(1)=1\}} q \binom{q - C - 1}{n - 1}  \\
&=  \sum\limits_{\{\sigma \in S_n\mid \sigma(1)=1\}} q \frac{(q - C - 1)(q - C - 2) \dots (q - C - (n-1))}{(n-1)!}  \\
&=  \frac{q}{(n-1)!}\sum\limits_{\{\sigma \in S_n\mid \sigma(1)=1\}}  \left(\prod_{i=1}^{n-1} (q - C- i)\right)\\
&=  q \prod_{i=1}^{n-1} (q - C- i).
\end{align*}
By Theorem \ref{Zaslavsky}, the number of regions in asymmetric $\mathbf{c}$-Catalan arrangement is 
\begin{align*}
r(\mathcal{A}_{\mathrm{Cat}}^{\mathbf{c}}) &=(-1)^n\chi(\mathcal{A}_{\mathrm{Cat}}^{\mathbf{c}},-1)\\
&=(-1)^n(-1)\prod_{i=1}^{n-1}(-1-C-i)\\
&=\prod_{i=1}^{n-1}(C+i+1)\\
&=(C+2)(C+3)\cdots(C+n)\\
&=\frac{n!}{C+1}\binom{C+n}{n}.
\end{align*}
This completes the proof.
    
\end{proof}

\subsection{ Characteristic Polynomial of Asymmetric $\mathbf{c}$-Shi Arrangement}

Much like our approach to the Catalan case, we now extend the Shi arrangement to an asymmetric setting. While the standard $k$-Shi arrangement shifts lower bounds by $+1$ uniformly, our setup varies these shifts based on the parity vector $\mathbf{c} \in \mathbb{Z}_{>0}^n$. Below, we define the \emph{asymmetric $\mathbf{c}$-Shi arrangement} and derive its characteristic polynomial, leading directly to a closed product formula for its region count.

\begin{definition}\label{shi-kind}
Let $\mathbf{c}=(c_1,c_2,\ldots,c_n)\in\mathbb{Z}_{>0}^n$
be a tuple of positive integers. For each $i\in[n]$, define $\mathbf{m}=(m_1,m_2,\ldots , m_n)$ such that
$$
m_i=
\begin{cases}
\dfrac{c_i+1}{2}, & \text{if } c_i \text{ is odd},\\[6pt]
\dfrac{c_i}{2}, & \text{if } c_i \text{ is even}.
\end{cases}
$$
and let $I=\{i\in[n]\mid c_i\text{ is odd}\}$. The \emph{asymmetric $\mathbf{c}$-Shi arrangement} in
$\mathbb{R}^n$ is the hyperplane arrangement
$$ 
\mathcal{A}_{\mathrm{Shi}}^{\mathbf{c}}=\{ x_i-x_j \in [-m_{i}-m_{j}+1+\mathds{1}_{I}(i),m_i+m_j-\mathds{1}_{I}(j)]\}
$$ 
where $i<j$ and $\mathds{1}_{I}$ denotes the indicator function of $I$. Equivalently,
$$
\mathcal{A}_{\mathrm{Shi}}^{\mathbf{c}}=\left\{
x_i-x_j=k
\;\middle|\;
\begin{array}{ll}
i\in I,\ j\notin I,
&
k\in[-m_i-m_j+2,\,m_i+m_j],\\[4pt]
i\notin I,j\in I,
&
k\in[-m_i-m_j+1,\,m_i+m_j-1],\\[4pt]
i,j\notin I,
&
k\in[-m_i-m_j+1,\,m_i+m_j]\\[4pt]
i,j\in I,
&
k\in[-m_i-m_j+2,\,m_i+m_j-1].
\end{array}
\right\}
$$
where $i<j$.
\end{definition}

\noindent Example of the asymmetric $\mathbf{c}$-Shi arrangement for $\mathbf{c} =(4,2,1)$ is shown in Figure \ref{fig:shi_regions}.

\begin{theorem}
The characteristic polynomial of the asymmetric $\mathbf{c}$-Shi arrangement is given by 
$$\chi(\mathcal{A}^\mathbf{c}_{\mathrm{Shi}}, q) = q(q-C)^{n-1}.$$
Moreover, the number of regions is
% Moreover the number of region is 
$$r(\mathcal{A}_{\mathrm{Shi}}^{\mathbf{c}})=(1+C)^{n-1}.$$
\end{theorem}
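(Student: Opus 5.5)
We compute the characteristic polynomial with the finite field method, adapting the circular-placement argument from the proof of Theorem~\ref{char-catalan}. Take $q$ a large prime. Then $\chi(\mathcal{A}^{\mathbf{c}}_{\mathrm{Shi}},q)$ is the number of $(\alpha_1,\dots,\alpha_n)\in\mathbb{F}_q^n$ such that, for every $i<j$, $\alpha_i-\alpha_j\notin[-m_i-m_j+1+\mathds{1}_I(i),\,m_i+m_j-\mathds{1}_I(j)]$ modulo $q$.

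The key difference from the Catalan case is that the allowed gaps depend on whether the pair is in increasing or decreasing order around the circle. To handle this, we place the points on the circle of $q$ positions and read them clockwise in some cyclic order. Suppose $\alpha_j$ immediately follows $\alpha_i$. Then $\alpha_j-\alpha_i$, taken as a representative in $[1,q-1]$, must avoid a forbidden window. If $i<j$, the condition becomes $\alpha_j-\alpha_i\ge m_i+m_j-\mathds{1}_I(i)$, so the number of blanks between them is at least $m_i+m_j-\mathds{1}_I(i)-1$. If $i>j$, the condition becomes $\alpha_j-\alpha_i\ge m_i+m_j-\mathds{1}_I(i)+1$, i.e. one more blank is required.

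Since all intervals $[-A,B]$ are contiguous, the usual argument shows that only consecutive points matter. This is precisely where the interval shape matters, and I would check that the forbidden set for non-adjacent pairs is automatically avoided, since those pairs are separated by at least the sum of the two intermediate gaps.

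Next, fix a cyclic order $\sigma$ with $\sigma(1)=1$. Summing the minimal required gaps around the circle gives
\[
\sum_{r}\bigl(m_{\sigma(r)}+m_{\sigma(r+1)}-\mathds{1}_I(\sigma(r))-1\bigr)+\operatorname{des}(\sigma),
\]
which equals $C-n+d(\sigma)$. Here $d(\sigma)$ is the number of cyclic descents, i.e. the number of consecutive pairs, including the wrap-around, with $\sigma(r)>\sigma(r+1)$. Any cyclic arrangement of distinct labels has at least one cyclic descent.

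Let $a_r\ge0$ denote the extra blanks in each gap. They satisfy $\sum a_r=q-n-(C-n+d)=q-C-d$. The count for each $\sigma$ is therefore $q\binom{q-C-d+n-1}{n-1}$. Summing over cyclic orders gives
\[
\chi=q\sum_{d}A(n,d)\binom{q-C-d+n-1}{n-1},
\]
where $A(n,d)$ counts cyclic permutations of $[n]$ (up to rotation) with $d$ cyclic descents. These are equinumerous with permutations of $[n-1]$ having $d-1$ descents, i.e. Eulerian numbers $A(n-1,d-1)$.

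Worpitzky's identity $x^{n-1}=\sum_k A(n-1,k)\binom{x+k}{n-1}$ (in the appropriate indexing) then collapses the sum to $q(q-C)^{n-1}$. I expect this identification to be the main obstacle: matching the cyclic-descent statistic to Eulerian numbers and aligning the Worpitzky indexing. If that indexing becomes messy, a fallback is to compute $\chi$ by the Athanasiadis-style recursion, which exploits the Shi property that each pair's window has length exactly $C$-independent structure, i.e. $(m_i+m_j-\mathds{1}_I(i)-\mathds{1}_I(j))\cdot 2$-type counts.

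Finally, Theorem~\ref{Zaslavsky} gives
\[
r=(-1)^n\chi(-1)=(-1)^n(-1)(-1-C)^{n-1}=(1+C)^{n-1}.
\]
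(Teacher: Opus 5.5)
Your proposal is correct and follows essentially the same route as the paper: finite field method, circular placement with gap sizes depending on ascents versus descents, weak compositions per cyclic order with $\sigma(1)=1$, grouping by Eulerian numbers, Worpitzky's identity, and then Zaslavsky's theorem. The only difference is orientation. The paper reads the circle so that each descent saves a blank, giving $q\sum_{\sigma(1)=1}\binom{q-C+p_\sigma}{n-1}$, which matches Worpitzky's identity directly; your reading gives $\binom{q-C+n-2-p_\sigma}{n-1}$, and the indexing issue you flagged is settled by the Eulerian symmetry $A(n-1,k)=A(n-1,n-2-k)$, so no fallback is needed.
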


\begin{proof}
Let $q$ be a sufficiently large prime power.
%and let $\mathbb{F}_q$ denote the finite field with $q$ elements.
By finite field method, we get  the characteristic polynomial 
$$\chi(\mathcal{A}_{\mathrm{Shi}}^{\mathbf c},q)
=
\left|
\mathbb{F}_q^n\setminus
\mathcal{A}_{\mathrm{Shi}}^{\mathbf c}
\right|.$$
Hence,
\[
\chi(\mathcal{A}_{\mathrm{Shi}}^{\mathbf c},q)
=
\left|\left\{
(\alpha_1,\ldots,\alpha_n)\in\mathbb{F}_q^n \;\middle|\;
\begin{gathered}{\alpha_i-\alpha_j
\notin
[-m_{i}-m_{j}+1+\mathds{1}_I(i),\,
m_{i}+m_{j}-\mathds{1}_I(j)]}\\{
\text{ for all }1\le i<j\le n} \end{gathered}
\right\}\right|.
\]
% Equivalently,
% $$
% \begin{cases}
% \alpha_i-\alpha_j>m_{i}+m_{j}-\mathds{1}_I(j),& i<j,\\[1mm]
% \alpha_j-\alpha_i>m_{i}+m_{j}-\mathds{1}_I(i)-1,& i>j.
% \end{cases}
% $$
Equivalently, when $i<j$, either
$$\alpha_i-\alpha_j>m_{i}+m_{j}-\mathds{1}_I(j),$$
or
$$
\alpha_j-\alpha_i>m_{i}+m_{j}-\mathds{1}_I(i)-1.
$$
Let us represent the elements of the finite field $\mathbb{F}_q=\{\bar{j}: 0 \leq j<q\}$ as points distributed evenly around a circle. 
Choose a permutation $\sigma=(\sigma(1),\sigma(2),\ldots,\sigma(n))\in S_n,$ to specify the cyclic order in which the elements
$\alpha_1,\ldots,\alpha_n$ are placed on the circle. Since rotating the circle does not change the configuration, we may assume without loss of generality that $\sigma(1)=1$. There are $q$ choices for the position of $\alpha_{\sigma(1)}$.
For each consecutive pair
$\alpha_{\sigma(i)},\alpha_{\sigma(i+1)}$, the minimum required number of blank positions depends on whether
$\sigma(i)<\sigma(i+1)$ or
$\sigma(i)>\sigma(i+1)$.
More precisely for $i\geq 2$, between
$\alpha_{\sigma(i)}$ and $\alpha_{\sigma(i+1)}$
there are
% $$
% \begin{cases}
% m_{\sigma(i)}+m_{\sigma(i+1)}-\mathds{1}_I(\sigma(i+1))+a_i,
% &
% \sigma(i)<\sigma(i+1),\\[2mm]
% m_{\sigma(i)}+m_{\sigma(i+1)}-\mathds{1}_I(\sigma(i))+a_i-1,
% &
% \sigma(i)>\sigma(i+1),
% \end{cases}
% $$
$$\begin{cases}
\left(m_{\sigma(i)}+m_{\sigma(i+1)}-\mathds{1}_I(\sigma(i+1))\right)+a_i,
&
\text{if }\sigma(i)<\sigma(i+1),\\[2mm]
\left(m_{\sigma(i)}+m_{\sigma(i+1)}-\mathds{1}_I(\sigma(i+1))-1\right)+a_i,
&
\text{if }\sigma(i)>\sigma(i+1)
\end{cases}$$ blank positions,
where $a_i\ge0$.

% Let $I=\{\sigma(i_1), \sigma(i_2), \ldots, \sigma(i_k)\}$.

% For $1 \leq r \leq i_1-2$, between
% $\alpha_{\sigma(r)}$
% and
% $\alpha_{\sigma(r+1)}$
% there are 
% $$\left\{ \begin{array}{cl}
%     m_{\sigma(r)}+m_{\sigma(r+1)}+a_{r},  & \text{if }\sigma(r) <\sigma(r+1) \\
%      m_{\sigma(r)}+m_{\sigma(r+1)}+a_{r}-1, & \text{if }\sigma(r) >\sigma(r+1)
% \end{array}\right. $$ blank positions.
% For $r=i_1-1$, between
% $\alpha_{\sigma(r)}$
% and
% $\alpha_{\sigma(r+1)}$
% there are $m_{\sigma(r)}+m_{\sigma(r+1)}-1+a_{r}$ blank positions; for $r=i_1$, between
% $\alpha_{\sigma(r)}$
% and
% $\alpha_{\sigma(r+1)}$
% there are  
% $$\left\{ \begin{array}{cl}
%     m_{\sigma(r)}+m_{\sigma(r+1)}+a_{r},  & \text{if }\sigma(r) <\sigma(r+1) \\
%      m_{\sigma(r)}+m_{\sigma(r+1)}+a_{r}-2, & \text{if }\sigma(r) >\sigma(r+1)
% \end{array}\right. $$ blank positions.

% As  the permutation $\sigma$ has $p_{\sigma}= p_\sigma^I+ p_\sigma^{I^c}$ descents,  each descent reduces
% the required number of blank positions by $$\left\lbrace \begin{array}{cl}
%     0, &  \text{if descent occurs at any point in } \{i_1-1, i_2-1, \dots, i_k-1\}\\
%     2, &  \text{if descent occurs at any point in } \{i_1, i_2, \dots, i_k\}\\
%     1, &  \text{otherwise}
% \end{array}\right..$$  

% Therefore, the total number of blank positions equals
% \textcolor{blue}{$$\sum_{j=1}^{n-1} m_{\sigma(j)}+m_{\sigma(j+1)} + m_{\sigma(1)}+m_{\sigma(n)}
% +\sum_{i=1}^n a_i-k-2p_\sigma^I
% -1-p_{\sigma}.$$}
Again, the total number of available blank spaces when placing $n$ elements on a circle of $q$ positions is exactly $(q - n)$. Summing the gaps between all consecutive elements gives:
$$
(m_{\sigma(1)}+m_{\sigma(n)})+\sum_{j=1}^{n-1} \left(m_{\sigma(j)}+m_{\sigma(j+1)}\right) +\sum_{i=1}^n(a_i-\mathds{1}_I(i))-(p_{\sigma}+1)=q-n.
$$
This simplifies to,
$$
\left(2\sum_{i=1}^nm_i-|I|\right) + \sum_{j=1}^n a_j-(p_{\sigma}+1)=C+ \sum_{j=1}^n a_j-(p_{\sigma}+1)= q - n 
$$
Therefore,
$$
\sum_{i=1}^n a_i=q-n+(1+p_{\sigma})-C.
$$
Hence, finding a valid spacing configuration is equivalent to finding the number of non-negative integer solutions $(a_1, \dots, a_n)$ to the equation above. By the standard formula for weak compositions, the number of such solutions is: 
$$
\displaystyle\binom{q-C+p_{\sigma}}{n-1}.
$$ 
Thus, for a specific cyclic ordering (permutation $\sigma$), the total number of valid ways to place $(\alpha_1,  \dots, \alpha_n)$ on the circle is:
$$ 
q \binom{q - C - p_{\sigma}}{n - 1}. 
$$
Summing over all permutations satisfying $\sigma(1)=1$, we obtain
\begin{equation}\label{eq:shi1}
{
\chi(\mathcal{A}_{\mathrm{Shi}}^{\mathbf c},q)
=q\sum_{\{\sigma \in S_n\mid \sigma(1)=1\}}
\binom{q-C+p_{\sigma}}{n-1},
}
\end{equation}
where $p_{\sigma}$ denotes the number of descents of $\sigma$. Since $\sigma(1)=1$, the first position does not contribute to the descent set of $\sigma$, and hence $\sigma$ can have at most $n-2$ descents. Therefore, grouping the permutations in $S_n$ that fix $1$ according to their number of descents, we obtain
$$
\chi(\mathcal{A}_{\mathrm{Shi}}^{\mathbf c},q)=q\sum_{p=0}^{n-2}
\left\langle
\begin{matrix}
n-1\\
p
\end{matrix}
\right\rangle
\binom{q-C+p}{n-1},
$$
where 
$\left\langle
\begin{matrix}
n-1\\
p
\end{matrix}
\right\rangle$
denotes the Eulerian number, counting permutations of $[n-1]$ with exactly $p$ descents. Recall the Worpitzky's identity,
\begin{equation*}
x^m=\sum_{k=0}^{m-1}
\left\langle
\begin{matrix}
m\\
k
\end{matrix}
\right\rangle
\binom{x+k}{m}.
\end{equation*}
Putting $x=q-C$ and $m=n-1$, we obtain
\begin{equation}
(q-C)^{n-1}=\sum_{p=0}^{n-2} \left\langle \begin{matrix}
n-1\\
p
\end{matrix}
\right\rangle
\binom{q-C+p}{n-1}.
\label{eq:shi2}
\end{equation}
Combining \eqref{eq:shi1} and \eqref{eq:shi2} yields
$$
\chi(\mathcal{A}_{\mathrm{Shi}}^{\mathbf c},q)=q(q-C)^{n-1}.
$$
By Theorem \ref{Zaslavsky}, the number of regions in asymmetric $\mathbf{c}$-Shi arrangement is 
\begin{equation*}
r(\mathcal{A}_{\mathrm{Shi}}^{\mathbf c}) =(-1)^n\chi(\mathcal{A}_{\mathrm{Shi}}^{\mathbf c},-1) =(1+C)^{n-1}.
\end{equation*}
This proves the theorem.
\end{proof}

\section{$\mathbf{c}$-labelled Dyck Path}\label{sec: c dyck}

In this section, we introduce a family of $\mathbf{c}$-labelled Dyck paths that provides a combinatorial model for the regions of the asymmetric $\mathbf{c}$-Catalan arrangements. These paths generalize the classical Dyck paths by allowing up steps of varying heights determined by a vector $\mathbf{c}=(c_1,c_2,\ldots,c_n)$, while the order of the up steps is recorded by a permutation. In Section \ref{sec:ACA}, we will establish a bijection between the regions of the asymmetric $\mathbf{c}$-Catalan arrangements and  $\mathbf{c}$-labelled Dyck paths.
Let $\mathbf{c}=(c_1,c_2,\ldots ,c_n)$ be a tuple of $n$ positive integers, and set $\displaystyle 
C=\sum_{i=1}^n c_i .$

\begin{definition}\label{c-dyckpath}
    A \textit{$\mathbf{c}$-labelled Dyck path} is a lattice path of length $(C+n)$ from $(0,0)$ to $(C+n,0)$ consisting of down steps $D=(1,-1)$ and up steps defined as follows: the $i$-th up step is $U_{\pi(i)}=(1,c_{\pi(i)})$,  for some $\pi \in S_n$, for $1\leq i\leq n$.
    
    This path is required to stay weakly above the $x$-axis. Hence, each $\mathbf{c}$-labelled Dyck path contains exactly $n$ up steps and $C$  down steps, giving a total of $(C+n)$ steps. 

    We denote the set of all $\mathbf{c}$-labelled Dyck paths of length $(C+n)$ by  $\mathcal{D}({\mathbf{c}})$.
\end{definition}

\begin{example} Let us consider an example where $n=2$, $\mathbf{c}=(3,2)$, and $\pi=21 \in S_2$. Thus, the first up step is $U_{\pi(1)} = U_2 = (1,c_2) = (1,3)$, and the second up step is $U_{\pi(2)} = U_1 = (1,c_1) = (1,2)$. Consider the $\mathbf{c}$-labelled Dyck path $U_2DU_1DDDD$, shown in the following figure:
%----------%----
 \vspace{3mm}
\begin{center}
\begin{tikzpicture}[scale=0.7, thick]
    % Optional: Add a faint grid to make the discrete steps easier to read
    \draw[thin, gray!20, step=1] (0,0) grid (7.5, 4.5);

    % Axes
    \draw[->, >=stealth, thin] (-0.5,0) -- (8,0) node[right] {$x$};
    \draw[->, >=stealth, thin] (0,-0.5) -- (0,5) node[above] {$y$};

    % Dyck Path
    \draw[blue!80!black, line width=1.2pt] 
        (0,0) -- (1,3) -- (2,2) -- (3,4) -- (4,3) -- (5,2) -- (6,1) -- (7,0);

    % Vertices
    \foreach \p in {(0,0), (1,3), (2,2), (3,4), (4,3), (5,2), (6,1), (7,0)}
        \filldraw[black] \p circle (2pt);

    % Step Labels
    \node[above left] at (0.92, 1.9) {$U_2$};
    \node[above right] at (1.6, 2.8) {$U_1$};

    % Coordinate Labels
    \node[below left] at (0,0) {$(0,0)$};
    \node[below] at (7,0) {$(7,0)$};
\end{tikzpicture}
\end{center}
\end{example}

\begin{note}
If $c_1=c_2=\cdots =c_n=m$, then a $\mathbf{c}$-labelled Dyck path reduces to the classical $m$-Dyck path. Thus, $\mathbf{c}$-labelled Dyck paths form a natural generalization of $k$-Dyck paths.
\end{note}
The following quantity naturally appears in the enumeration of $\mathbf{c}$-labelled Dyck paths.
\begin{definition}[Generalized Fuss-Catalan number]\label{generalized Fuss-Catalan}
Let $\mathbf{c}=(c_1,c_2,\ldots ,c_n)$ be a tuple of $n$ positive integers.
 Suppose that the entries of $\mathbf{c}$ take $l$ distinct values, occurring with multiplicities $k_1,k_2,\ldots ,k_l$, respectively. Hence $k_1+k_2+\cdots +k_l=n.$
The \it{generalized Fuss--Catalan number} associated with $\mathbf{c}$ is defined by
$$
C_n^{\mathbf{c}}
=\frac{n!}{k_1!\,k_2!\cdots k_l!}\,
\frac{1}{(C+1)}\binom{C+n}{n}.
$$
\end{definition}
\begin{note}
    If $c_1=c_2=\cdots =c_n=m$ then $C=mn$ and $l=1$, $k_1=n$, hence 
    $$
    C_n^{\mathbf{c}}=\frac{1}{mn+1}\binom{(m+1)n}{n},
    $$
    which is the $m$-Fuss-Catalan number $C^m_n$. 
\end{note}

\begin{lemma}[Cycle lemma]
Let $a_1,a_2,\ldots, a_n$ be integers with total sum $a_1+a_2+\ldots +a_n=k>0.$ Consider all cycle rotations of the sequence, i.e., 
$$a_i,a_{i+1}, \ldots , a_n,a_1,a_2,\ldots, a_{i-1}.$$
Then exactly $k$ of these cyclic rotations have all partial sums strictly positive.  
\end{lemma}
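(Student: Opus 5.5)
The plan is the standard proof of the cycle lemma (Dvoretzky--Motzkin / Raney), run on the bi-infinite periodic extension of the sequence. Extend $a_1,\dots,a_n$ periodically by setting $a_{i+n}=a_i$ for all integers $i$. Define partial sums $S_0=0$ and $S_j=a_1+\cdots+a_j$, extended to all $j\in\mathbb{Z}$ so that $S_j-S_{j-1}=a_j$. Periodicity then gives
\[
S_{j+n}=S_j+k \quad \text{for all } j .
\]
The rotation starting at $a_{i+1}$ (indices mod $n$) has partial sums $S_{i+t}-S_i$ for $t=1,\dots,n$. So this rotation has all partial sums strictly positive if and only if
\[
S_{i+t}>S_i \quad\text{for } 1\le t\le n .
\]
Because $S_{j+n}=S_j+k>S_j$, this is equivalent to $S_j>S_i$ for all $j>i$. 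Call such an index $i$ a \emph{record from the right}: $S_i$ is strictly smaller than every later partial sum. The statement thus reduces to showing that exactly $k$ residues $i \bmod n$ are records. The record property is invariant under $i\mapsto i+n$, so it is well defined on residues.

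To count records, I use the integrality of the $S_j$ and a level-set argument. For each integer value $v$, let
\[
r(v)=\max\{\,j : S_j\le v\,\}.
\]
This is finite because $S_j\to\infty$ as $j\to\infty$ (since $k>0$), and $\{j: S_j\le v\}$ is nonempty because $S_j\to-\infty$ as $j\to-\infty$.

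\begin{itemize}
\item First I show that every record $i$ equals $r(S_i)$. Indeed $S_j>S_i$ for $j>i$, so $i$ is the largest index with $S_j\le S_i$.
\item Conversely, $r(v)$ is a record. Any $j>r(v)$ has $S_j>v\ge S_{r(v)}$.
\item Hence records correspond to values $\{S_{r(v)} : v\in\mathbb{Z}\}$. I claim $S_{r(v)}=v$ exactly, so that distinct integers $v$ give distinct records and $v\mapsto r(v)$ is a bijection from $\mathbb{Z}$ to the set of records.
\end{itemize}

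The last claim is where the main obstacle lies. It needs the step sizes to behave: the argument works if the sequence only increases by steps of $+1$, as in the Dyck-path application, where the path goes down by $1$ and up by $c_i$, so reading it backwards, or negating, gives upward steps of size $1$. For arbitrary integers the statement as written is false. A counterexample is $(2,-1)$ with $k=1$: both rotations, $(2,-1)$ with partial sums $2,1$ and $(-1,2)$ with partial sums $-1,1$, must be checked, and only one is positive, so the count holds here. But $(3,-1)$ with $k=2$ has rotations $(3,-1)$, with sums $3,2$, and $(-1,3)$, which fails. That gives $1\neq 2$.

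So I would first restrict to the intended hypothesis, $a_i\le 1$ for all $i$. This is the case used later, where each $a_i\in\{1,-c\}$ comes from encoding down steps as $+1$ and up steps as $-c_{\pi(i)}$ in reversed order. Under $a_i\le1$, the sums $S_j$ increase by at most $1$ per step. Since $S_j\to\infty$ and the sequence passes every integer $v$ going upward, we get $S_{r(v)}=v$: otherwise $S_{r(v)}<v$, and then some later index attains $v$ exactly, contradicting maximality.

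Finally, using $S_{j+n}=S_j+k$, we have $r(v+k)=r(v)+n$. So the records modulo $n$ correspond to the values $v$ in a window of $k$ consecutive integers, and these are pairwise distinct mod $n$. This gives exactly $k$ good rotations. I would note in the proof that the stated hypothesis should include $a_i\le 1$ (equivalently, in the Dyck-path application, that all positive entries equal $1$), since the result is false without it.
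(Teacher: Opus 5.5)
Your argument is correct, and so is your diagnosis of the statement. As written, for arbitrary integers, the lemma is false. Your example $(3,-1)$ shows this, and even $n=1$, $a_1=2$ fails, since there is only one rotation but $k=2$; nothing in the stated hypotheses forces $k\le n$.

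The paper gives no proof of this lemma at all. It quotes it as the classical cycle lemma, without the hypothesis $a_i\le 1$, so there is no competing route to compare against. Your periodic-extension/record argument is the standard Dvoretzky--Motzkin/Raney proof. Its steps are:
\begin{itemize}
\item $S_{j+n}=S_j+k$;
\item records are the indices $i$ with $S_j>S_i$ for all $j>i$;
\item $r(v)=\max\{j: S_j\le v\}$;
\item $S_{r(v)}=v$, because $S_{r(v)}\le v<S_{r(v)+1}\le S_{r(v)}+1$;
\item $r(v+k)=r(v)+n$.
\end{itemize}
Every step checks out under $a_i\le 1$.

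Two remarks. First, the sentence presenting $(2,-1)$ as ``a counterexample'' should be rephrased, since you then correctly find that the count holds there.

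Second, the missing hypothesis does not damage the paper's use of the lemma. In the proof of Theorem~\ref{m dyck} the weights are $1$ and $-c_i$, so $a_i\le 1$ holds, and the reversal in Corollary~\ref{cor:inv_cyc_lemma} preserves this. In fact the total weight there is $(C+1)-C=1$, and for $k=1$ no hypothesis on the entries is needed at all. Your own framework shows why:
\begin{itemize}
\item without $a_i\le 1$, records are still determined by their values;
\item the set $V$ of record values is nonempty and invariant under $v\mapsto v+k$;
\item residue classes of records modulo $n$ correspond to classes of $V$ modulo $k$, so there are between $1$ and $k$ good rotations;
\item for $k=1$ this forces exactly one.
\end{itemize}
So the paper's statement should either add the hypothesis $a_i\le 1$, as you propose, or be restricted to $k=1$ (Raney's lemma), which is all that is actually used.
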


\begin{corollary}\label{cor:inv_cyc_lemma}
Let $a_1,a_2,\ldots, a_n$ be integers with total sum $a_1+a_2+\ldots +a_n=k>0.$ Consider all inverse cyclic rotations of the sequence. Then exactly $k$ of these cyclic rotations have all inverse partial sums are strictly negative, i.e., $a_n,a_{n-1},\dots,a_{i}$ strictly negative for all $i\in [n]$.  
\end{corollary}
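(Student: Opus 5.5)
The plan is to derive the corollary directly from the Cycle lemma by reversing the sequence. Define $b_i=a_{n+1-i}$ for $i\in[n]$. Then $b_1+\cdots+b_n=k>0$, so the Cycle lemma applies to $(b_1,\ldots,b_n)$: exactly $k$ of its $n$ cyclic rotations have all partial sums strictly positive.

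The first step is to set up the correspondence between rotations. The inverse cyclic rotations of $(a_1,\ldots,a_n)$ are the sequences read backwards starting from some $a_i$, namely
\[
a_i,a_{i-1},\ldots,a_1,a_n,\ldots,a_{i+1}.
\]
Each of these is exactly a cyclic rotation of $(b_1,\ldots,b_n)$, namely the one starting at $b_{n+1-i}$. As $i$ runs over $[n]$, this gives a bijection between inverse cyclic rotations of $a$ and cyclic rotations of $b$.

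The second step is to match the conditions. The partial sums of a rotation of $b$ are precisely the inverse partial sums of the corresponding inverse rotation of $a$, that is, the sums $a_n+a_{n-1}+\cdots+a_i$ taken in the rotated indexing. So the count $k$ transfers through the bijection. Here I would fix the precise indexing convention and check that the bijection is indeed $i\mapsto n+1-i$.

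The main obstacle is the sign. The statement says ``strictly negative'', while the Cycle lemma gives ``strictly positive''. With total sum $k>0$, the full inverse partial sum equals $k$, so literally requiring all of them to be negative is impossible. I would therefore read the intended condition in one of two ways.

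\emph{Reading 1.} The condition is that the inverse partial sums are strictly positive. Then the reversal argument above finishes the proof directly.

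\emph{Reading 2.} The condition concerns the negated sequence, with total $-k$. In this case I would apply the Cycle lemma to $(-b_1,\ldots,-b_n)$ after flipping signs, so that ``all partial sums negative'' becomes ``all partial sums of the negation positive''.

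I would adopt the reading that matches the later use with Dyck paths. There the up steps $c_i$ and down steps $-1$ are read from the end, and the total $C+n$ versus $0$ is adjusted by appending one extra step.
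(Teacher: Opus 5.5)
Your reversal argument is essentially the paper's proof: the paper's entire justification is the remark that the corollary follows from the Cycle lemma by reversing the sequence, and your matching of the backward rotation starting at $a_i$ with the rotation of $b$ starting at $b_{n+1-i}$ correctly supplies the details. You are also right that the printed statement is self-contradictory, since the full inverse partial sum equals $k>0$; the version actually used in the proof of Theorem~\ref{m dyck} (weight $+1$ on each $D$, weight $-c_i$ on each $U_i$, total $1$, exactly one rotation with all inverse partial sums strictly positive) is your Reading~1, not the sign convention you sketch at the end, though your Reading~2 is also true and follows by additionally negating.
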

This follows immediately from the Cycle Lemma by reversing the sequence.
% \begin{proof}
% We reduce this result to the standard Cycle Lemma via a reflection argument. Define a dual sequence $b_1, b_2, \ldots, b_n$ by reversing the order of the original terms and inverting their signs:
% $$b_j = -a_{n - j + 1} \quad \text{for } 1 \leq j \leq n.$$

% The total sum of this new sequence is given by $\sum_{j=1}^n b_j = -\sum_{i=1}^n a_i = -k$. Since $k < 0$, the new total sum $-k$ is a strictly positive integer. By the standard Cycle Lemma, exactly $-k$ cyclic rotations of the sequence $b$ have all their partial sums strictly positive.

% Let $b^{(j)}$ denote the cyclic rotation of $b$ starting at index $j$. A direct index translation reveals that the partial sums of $b^{(j)}$ are strictly positive if and only if the partial sums of the cyclic rotation $a^{(n-j+2)}$ (with indices taken modulo $n$) are strictly negative when evaluated from right to left. Because a cyclic sequence has strictly negative partial sums from right to left if and only if its forward partial sums are strictly negative, each valid rotation of $b$ maps bijectively to a unique valid rotation of $a$. Consequently, exactly $-k$ cyclic rotations of the original sequence have exclusively strictly negative partial sums.
% \end{proof}

\begin{theorem}\label{m dyck}
The number of $\mathbf{c}$-labelled Dyck paths is  
$(k_1!\,k_2!\cdots k_l!)C_n^{\mathbf{c}}$.
\end{theorem}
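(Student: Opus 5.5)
The target count is
\[
(k_1!\cdots k_l!)\,C_n^{\mathbf{c}}=\frac{n!}{C+1}\binom{C+n}{n}=\frac{(C+n)!}{(C+1)!},
\]
and I will reach it with the cycle lemma. The idea is to encode each path by its step sequence, note that steps with equal $c_i$ but different labels stay distinguishable because the labelling $\pi$ is part of the data, and count all labelled arrangements of steps before cutting down to Dyck paths.

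\textbf{Step 1 (encoding).} A $\mathbf{c}$-labelled Dyck path is a word of length $C+n$ in the $n$ distinct letters $U_1,\dots,U_n$, each used once, together with $C$ copies of $D$. The word must satisfy one condition: every prefix height, computed with $U_i\mapsto c_i$ and $D\mapsto -1$, is nonnegative. The total number of words (with or without this condition) is $\frac{(C+n)!}{C!}$, because we choose the positions and order of the distinct up steps and fill the rest with $D$.

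\textbf{Step 2 (append and apply the cycle lemma).} Append one extra step $D$ at the front, reading the word reversed if that is more convenient. This gives sequences of length $C+n+1$ with total sum $-1$. I will apply Corollary~\ref{cor:inv_cyc_lemma}, or equivalently the Cycle Lemma after negating, with $k=1$. In the form I need, it says: for a sequence with sum $-1$, exactly one cyclic rotation has all proper prefix sums $\ge 0$, and that rotation ends in a $D$. Stripping this final $D$ gives a Dyck path. So the Dyck paths correspond to the rotation classes of words of length $C+n+1$ containing the letters $U_1,\dots,U_n$ and $C+1$ copies of $D$.

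\textbf{Step 3 (count the classes).} There are $\frac{(C+n+1)!}{(C+1)!}$ such words. Because the $U_i$ are distinct and appear with multiplicity one, no word equals a nontrivial rotation of itself. Hence every class has exactly $C+n+1$ members. This gives
\[
\frac{(C+n+1)!}{(C+1)!\,(C+n+1)}=\frac{(C+n)!}{(C+1)!}=\frac{n!}{C+1}\binom{C+n}{n}=(k_1!\cdots k_l!)\,C_n^{\mathbf{c}},
\]
and the last equality holds by the definition of $C_n^{\mathbf{c}}$.

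\textbf{Main obstacle.} The delicate point is stating the cycle-lemma variant correctly. The paper's lemma concerns strictly positive partial sums, while the path needs weakly nonnegative heights, so the translation needs care. Reversing the path and negating steps, prefix heights $\ge 0$ with final height $0$ become the statement that the reversed word with the extra $D$ has all partial sums of the negated sequence strictly positive; this is exactly the setting of Corollary~\ref{cor:inv_cyc_lemma} with $k=1$. I will check this translation carefully, including that the unique good rotation ends at the appended $D$, so that removing it recovers a path. I will also check that rotation classes have full size, which follows from the distinctness of the $U_i$.
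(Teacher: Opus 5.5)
Your proposal is correct and is essentially the paper's argument: you adjoin one extra down step, note that the $(C+n+1)!/(C+1)!$ resulting words split into rotation classes of size exactly $C+n+1$, and use the (reversed) cycle lemma with $k=1$ to single out in each class the unique rotation which, after deleting its final $D$, is a $\mathbf{c}$-labelled Dyck path. Your write-up is, if anything, slightly more careful than the paper's, since you justify the full class size via the distinct labels $U_i$ and state exactly the nonnegative-prefix form you need; just record that the cycle lemma requires every entry to be at most $1$ (true here after negation, with $D\mapsto 1$ and $U_i\mapsto -c_i$), a hypothesis the paper's statement of the lemma leaves out.
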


\begin{proof}
Let $\mathcal{L}$ be the number of lattice paths from $(0,0)$ to $(C+n+1,-1)$ where the up steps are $(1,c_{\pi(1)}),\dots, (1,c_{\pi(n)})$ arising in some order and consisting of $C+1$ down steps $(1,-1)$. The number of paths in $\mathcal{L}$ is 
$$
|\mathcal{L}|=n! \binom{C+n+1}{n}=\frac{(C+n+1)!}{(C+1)!}.
$$
Define a relation $\sim$ on $\mathcal{L}$, where two path $L_1\sim L_2$ if $L_2$ is cycle rotation of the sequence of $L_1$. It is easy to observe that the relation $\sim$ is an equivalence relation, and each equivalence class of $\mathcal{L}$ contains exactly $(C+n+1)$ paths.

Assign each of the down path a weigh $1$ and each up step $(1,c_i)$ a weight $-c_{i}$. Therefore, regardless of the chosen permutation $\pi$, the total weight of any such path is strictly constant.
$$(C+1) -\sum_{i=1}^{n}c_{\pi(i)} = (C+1) - C =1.$$

By Corollary \ref{cor:inv_cyc_lemma}, exactly one path in each of the equivalence class defined by $\sim$ on $\mathcal{L}$ will have inverse partial sum strictly positive. We call such a path to be a \textit{good path}. The number of good paths in $\mathcal{L}$ is thus 
$$\frac{n!}{C+n+1} \binom{C+n+1}{n} = \frac{n!}{C+1}\binom{C+n}{n}.$$

Removing the last down step from the good path shifts the partial sums from strictly positive to nonnegative. Therefore, all partial sums of the path obtained is nonpositive, implying that for any $i$, the weight of the all the up steps till the $i$-th step is greater than the weight of all the down steps till the $i$-th step. Therefore this yields a path of length $(C+n)$ with $n$ up steps and \red{$s$} down steps that stays weakly above the $x$-axis. Thus, the resulting path is a valid $\mathbf{c}$-labelled Dyck path corresponding to the permutation $\sigma$. This operation is reversible, establishing a bijection, meaning the total number of geometrically distinct $\textbf{c}$-labelled Dyck paths is exactly the number of good paths. This proves the theorem.
\end{proof}

\section{$\mathbf{c}$-building Function}\label{sec: c-building}

In this section, We introduce a skyscraper model for a heterogeneous family of directed parking functions, which we call $\mathbf{c}$-building functions. For the classical Shi arrangement, the regions are naturally described by parking functions. However, this approach does not extend well to the asymmetric setting. To address this, we introduce a new combinatorial object, called a $\mathbf{c}$-building function, motivated by a dynamic skyscraper construction process. This model provides a simple and intuitive way to describe the regions of asymmetric $\mathbf{c}$-Shi arrangement and forms the basis for the bijections developed in the following sections.

Although the sequence studied in this paper is a natural generalization of the classical $k$-parking function, the usual parking interpretation becomes less intuitive in this setting. To motivate our construction, we introduce an equivalent interpretation in terms of a skyscraper construction project. In Section \ref{sec:ASA}, we will establish a bijection between the regions of the asymmetric $\mathbf{c}$-Shi arrangements and  $\mathbf{c}$-building function.

\subsection{Physical Interpretation: The Skyscraper Construction Model}\label{c-building}

Consider the construction of a skyscraper by $n$ firms indexed by $[n]={1,2,\dots,n}$. The construction proceeds according to the following rules.

\begin{enumerate}
\item Each firm $i\in[n]$ is assigned to construct exactly $c_i$ floors, where $\mathbf{c}=(c_1,c_2,\dots,c_n)$.

\item Firm $i$ can begin construction only after the building has reached a height of at least $a_i$ floors. We call $a_i\in\mathbb{Z}_{\ge0}$ the \emph{initiation threshold} of firm $i$.

\item Initially, the building has height $0$. Whenever the current height is at least the initiation threshold of a firm that has not yet begun construction, that firm may start construction and immediately adds $c_i$ floors to the building. This increase in height may enable additional firms to begin construction, resulting in a chain of successive construction phases.

\end{enumerate}
Thus, instead of viewing the process as cars competing for parking spaces, we interpret it as a sequence of firms whose ability to begin construction depends on the height of the partially completed building. We are interested in those tuples $(a_1,a_2,\dots,a_n)$ for which the resulting chain reaction ensures that every firm is eventually able to begin construction. Equivalently, the construction proceeds without stalling until all $n$ firms have successfully completed their construction. Such a sequence $(a_1,a_2,\dots,a_n)$ is called a \emph{$\mathbf{c}$-building function}. We denote the set of all $\mathbf{c}$-building functions of length $n$ by  $\mathcal{B}({\mathbf{c}})$.

% \begin{definition}
% A sequence of thresholds $a = (a_1, a_2, \dots, a_n)$ is defined as a \textit{$\mathbf{c}$-building function} (or mathematically, a multivariate $\mathbf{c}$-parking function) if the resulting chain reaction guarantees that all $n$ firms successfully mobilize and complete the skyscraper without the project stalling in a deadlock.
% \end{definition}

This constructive framework perfectly mirrors the algebraic requirement that for any immobilized subset of firms $S$, the accumulated floors built by the active firms outside of $S$ must be tall enough to satisfy the threshold $a_i$ of at least one firm $i \in S$.

For formal rigor, we translate the physical movement into a dynamic algorithmic process, and then establish its equivalence to the standard combinatorial subset definition.

\medskip

\noindent\textbf{Dynamic Building Process:} Let $\mathbf{c} = (c_1, c_2, \dots, c_n)$ be the capacity vector (representing floors built per firm), and $a = (a_1, a_2, \dots, a_n) \in \mathbb{Z}_{\ge 0}^n$ be the initiation threshold sequence.

\begin{itemize}
    \item \textbf{Initialization:} 
    Let $M_0 = \emptyset$ be the initial set of mobilized firms. Let $W_0 = [n]$ be the set of waiting firms. Let $H_0 = 0$ be the initial height of the skyscraper in floors.
    
    \item \textbf{Iteration Step $t$ (for $1 \le t \le n$):}
    \begin{enumerate}
        \item Evaluate the current accumulated height of the skyscraper built by the firms that mobilized in previous steps: 
        $H_{t-1} = \sum_{j \in M_{t-1}} c_j$
        
        \item Identify the set of eligible firms $E_t \subseteq W_{t-1}$ whose structural prerequisites are met and are ready to begin construction:
        $E_t = \{ i \in W_{t-1} \mid a_i \le H_{t-1} \}$
        
        \item If $E_t = \emptyset$ while $W_{t-1} \neq \emptyset$, the algorithm terminates in a deadlock (the project stalls).
        
        \item If $E_t \neq \emptyset$, select any firm $v \in E_t$ to mobilize. (Since the building height monotonically increases, this process is abelian; the choice of $v$ does not affect the ultimate success of the project).
        
        \item Update the system state for the next iteration: $M_t = M_{t-1} \cup \{v\}$ and $W_t = W_{t-1} \setminus \{v\}$. 
        %(Note: This recursively defines the mobilized set $M_t$, which will then be evaluated as $M_{t-1}$ in the subsequent step).
    \end{enumerate}
    
    \item \textbf{Termination:} The sequence $a$ is called a $\mathbf{c}$-building function if the algorithm terminates with $M_n = [n]$ and $W_n=\emptyset$ (all firms successfully mobilized and the skyscraper is completed).
\end{itemize}

See the algorithmic execution of Example \ref{valid-c} and \ref{invalid-c} for further clarification. To demonstrate the dynamics of the multivariate $\mathbf{c}$-building function and its reliance on specific firm identities, let us examine a project with $n=3$ firms with a completely heterogeneous capacity vector $\mathbf{c} = (2, 1, 3).$ 

\begin{example}[A Valid $\mathbf{c}$-building Function]\label{valid-c}
   Consider the preference sequence $a = (0, 2, 3)$. We claim this is a valid multivariate $\mathbf{c}$-building function.

% \textbf{Mathematical Verification:}
% By definition, for every non-empty $S \subseteq \{1, 2, 3\}$, there must exist an $i \in S$ such that $a_i \le \sum_{j \notin S} c_j$. We verify a few critical subsets:
% \begin{itemize}
%     \item Let $S = \{3\}$. The firms outside $S$ are $\{1, 2\}$. The condition requires $a_3 \le c_1 + c_2 \implies 3 \le 2 + 1 \implies 3 \le 3$. This holds.
%     \item Let $S = \{2, 3\}$. The firm outside $S$ is $\{1\}$. The condition requires $a_2 \le c_1$ \textit{or} $a_3 \le c_1$. Checking the values, $a_2 = 2 \le 2$. Since at least one firm satisfies the inequality, the subset condition holds.
%     \item Let $S = \{1, 2, 3\}$. The set outside $S$ is $\emptyset$. The condition requires $a_1 \le 0$, $a_2 \le 0$, or $a_3 \le 0$. Since $a_1 = 0 \le 0$, it holds.
% \end{itemize}

% \textbf{Algorithmic Execution:}
Physically, the project initializes with $H_0=0$ built floors and $M_0 = \emptyset$, $W_0=[3]$. Firm $1$ (requiring $a_1=0$) mobilizes, adding $c_1=2$ floors, hence $M_1=\{1\}$ and $W_1=\{2,3\}$. The accumulated height is now $H_1=2$. Firm $2$ (requiring $a_2=2$) mobilizes, adding $c_2=1$ floor, hence $M_2=\{1,2\}$ and $W_2=\{3\}$. The accumulated height is now $H_2=2+1=3$. Firm $3$ (requiring $a_3=3$) mobilizes, hence $M_3=[3]$ and $W_3=\emptyset$. All firms have mobilized without the project stalling. 
\end{example}

\begin{example}[An Invalid Sequence]\label{invalid-c}
Consider the  preference sequence $b = (0, 4, 3)$. We will demonstrate that this sequence results in a project deadlock.

% \textbf{Mathematical Verification:}
% To prove a sequence is invalid, it is sufficient to find a single subset $S$ that violates the condition. Let $S = \{2, 3\}$. The total floors built by firms outside $S$ is simply $c_1 = 2$.
% The condition demands that at least one firm in $S$ must satisfy the inequality:
% \begin{align*}
% b_2 &\le c_1 \implies 4 \le 2 \quad \text{(False)} \nonumber \\
% b_3 &\le c_1 \implies 3 \le 2 \quad \text{(False)}.
% \end{align*}
% Because neither firm in the subset satisfies the threshold, the condition fails. Thus, $b$ is not a $\mathbf{c}$-building function.

% \textbf{Algorithmic Execution:}
%Tracing this physically reveals the exact moment of failure. 
The project initializes with $H_0=0$ built floors and $W_0=[3]$. Firm $1$ (requiring $b_1=0$) mobilizes and builds $c_1 = 2$ floors. The unmobilized queue is now $W_1=\{2, 3\}$, and the accumulated height is $H_1=2$. 
Firm $2$ requires $b_2=4$ floors and Firm $3$ requires $b_3=3$ floors. Because the current height $H_1$ is strictly less than both thresholds, neither firm can proceed. The project deadlocks, confirming the sequence is invalid.
\end{example}

\begin{lemma}[Equivalent Condition]\label{equivalence}
A sequence $a = (a_1, \dots, a_n) \in \mathbb{Z}_{\ge 0}^n$ is a $\mathbf{c}$-building function if and only if for every non-empty subset $S \subseteq [n]$, there exists at least one firm $i \in S$ such that $a_i \le \sum_{j \notin S} c_j$.
\end{lemma}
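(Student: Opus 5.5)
We prove the two implications separately. Throughout we use the invariant of the Dynamic Building Process that at step $t$ the height is $H_{t-1}=\sum_{j\in M_{t-1}}c_j$, with $M_{t-1}$ the set of firms already mobilized and $W_{t-1}=[n]\setminus M_{t-1}$ the waiting firms.

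\emph{Subset condition $\Rightarrow$ building function.} Assume that for every nonempty $S\subseteq[n]$ some $i\in S$ satisfies $a_i\le\sum_{j\notin S}c_j$. We show by induction on $t$ that the process never deadlocks.

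\begin{itemize}
\item At any step $t\le n$ with $W_{t-1}\neq\emptyset$, apply the hypothesis to $S=W_{t-1}$.
\item Its complement is $M_{t-1}$, so the hypothesis gives some $i\in W_{t-1}$ with $a_i\le\sum_{j\in M_{t-1}}c_j=H_{t-1}$.
\item Hence $E_t\neq\emptyset$, and some firm mobilizes.
\end{itemize}

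Each step mobilizes exactly one firm, so after $n$ steps $M_n=[n]$ and $W_n=\emptyset$. Thus $a$ is a $\mathbf{c}$-building function, whichever eligible firm is chosen at each step.

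\emph{Building function $\Rightarrow$ subset condition.} Suppose $a$ is a $\mathbf{c}$-building function, so some run of the process mobilizes all firms in an order $v_1,v_2,\dots,v_n$. Fix a nonempty $S\subseteq[n]$.

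\begin{itemize}
\item Let $t$ be the smallest index with $v_t\in S$.
\item Then $M_{t-1}=\{v_1,\dots,v_{t-1}\}$ is disjoint from $S$, i.e.\ $M_{t-1}\subseteq[n]\setminus S$.
\item Since $v_t\in E_t$, we get $a_{v_t}\le H_{t-1}=\sum_{j\in M_{t-1}}c_j\le\sum_{j\notin S}c_j$.
\end{itemize}

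The last inequality uses $c_j>0$ for all $j$. So $i=v_t\in S$ is the required firm.

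\emph{The main subtlety.} The definition says the algorithm selects \emph{any} eligible firm. One must make sure "building function" does not depend on these choices; the paper only asserts this abelian property.

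\begin{itemize}
\item The first direction shows the subset condition guarantees success for \emph{every} sequence of choices.
\item The second direction shows that success for \emph{some} sequence of choices already implies the subset condition.
\end{itemize}

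Together these give: some run succeeds $\Leftrightarrow$ the subset condition holds $\Leftrightarrow$ every run succeeds. This makes the definition well posed and justifies the abelian claim as a byproduct.
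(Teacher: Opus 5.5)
Your proof is correct and follows the paper's approach. Sufficiency is argued exactly as the paper does, by applying the subset condition to $S=W_{t-1}$ at a putative deadlock, and necessity is the paper's contrapositive argument stated directly: no firm of a violating $S$ can start while the height is at most $\sum_{j\notin S}c_j$. Your version is tidier than the paper's: looking at the first mobilized firm of $S$ makes rigorous the paper's informal ``maximum accumulated height'' step, and your two directions together prove the choice-independence of the process, which the paper only asserts.
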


\begin{proof}
For the forward implication, suppose the subset condition fails for some non-empty $S \subseteq [n]$. This implies that for all $i \in S$, $a_i > \sum_{j \notin S} c_j$. If the algorithm runs and manages to mobilize all firms outside of $S$ (so the mobilized set is $M = [n] \setminus S$), the maximum accumulated height generated is exactly $H = \sum_{j \notin S} c_j$. At this point, the remaining waiting firms are exactly the set $S$. Since $a_i > H$ for every $i \in S$, the eligible set is $E = \emptyset$. The project deadlocks. Thus, the condition is necessary.

For the converse part, assume the subset condition holds for all $S \subseteq [n]$. Suppose, for the sake of contradiction, that the dynamic algorithm deadlocks at step $t$, leaving a non-empty set of unmobilized firms $W_{t-1} = S$. The total height built by the mobilized firms is $H_{t-1} = \sum_{j \notin S} c_j$. Because the system is deadlocked, no firm in $S$ is eligible, meaning $a_i > H_{t-1}$ for all $i \in S$. Consequently, $a_i > \sum_{j \notin S} c_j$ for all $i \in S$. This contradicts our assumption that every subset contains at least one firm satisfying the inequality. Therefore, the algorithm must successfully terminate.
\end{proof}

\begin{note}\label{gen of parking}
The multivariate $\mathbf{c}$-building function generalizes the classical $k$-parking function. Recall that a $k$-parking function is a sequence $(a_1,\ldots,a_n)$ of nonnegative integers whose increasing rearrangement
$a_{(1)}\le \cdots \le a_{(n)}$ satisfies
$a_{(i)}\le k(i-1), \qquad 1\le i\le n.$

If $c_j=k$ for every $j\in[n]$, then the defining condition of a $\mathbf{c}$-building function becomes
$$a_i\le \sum_{j\notin S}c_j=k(n-|S|)$$
for some $i\in S$ and every nonempty subset $S\subseteq[n]$. Choosing $S$ to be the set of indices corresponding to the $n-m+1$ largest entries of $a$, we have $|S|=n-m+1$, so there exists $i\in S$ such that
$a_i\le k(m-1).$
Since the smallest element of $S$ is $a_{(m)}$, it follows that
$a_{(m)}\le k(m-1),$
which is exactly the defining condition of a classical $k$-parking function. Thus, when the capacity vector is constant, the $\mathbf{c}$-building function specializes to the classical $k$-parking function.
\end{note}

\subsection{Enumeration Theorem}
We now enumerate the $\mathbf{c}$-building functions. Before that we need some definitions and theorems for enumeration.

\begin{definition}
    Let $G = (V, E)$ be a sink-rooted digraph, and let the vertices in $V$ be indexed from $1$ to $n$, where the $n$-th node is the sink. The \textit{adjacency matrix} of $G$ is the $n \times n$ matrix $A = [a_{ij}]$, where $a_{ij}$ is the number of edges that start at $i$ and end at $j$ and $\text{deg}^+(i)$ is the number of outward edges from $i$.  The \textit{diagonal matrix} of $G$ is the $n \times n$ diagonal matrix $D = [d_{ij}]$, where
\begin{align*}
d_{ij} = \begin{cases}
    \text{deg}^+(i), & \text{if } i = j \\
    0, & \text{otherwise}
\end{cases}
\end{align*}
The \textit{Laplacian} of $G$ is the matrix $D - A$. The \textit{sink-reduced Laplacian} of $G$ is the $(n - 1) \times (n - 1)$ matrix $L$ formed by removing the nth row and column from the Laplacian of $G$, the row and column corresponding to the sink. To clarify, the sink-reduced Laplacian of $G$ is the matrix $L = [l_{ij}]$, where, for all $1 \le i, j \le n - 1$,
\begin{align*}
l_{ij} = \begin{cases}
    \text{deg}^+(i) - a_{ii}, & \text{if } i = j \\
    -a_{ij}, & \text{otherwise}
\end{cases}
\end{align*}
\end{definition}
An \textit{oriented spanning trees} $T$ of the digraph $G$ is a subgraph $T\subset G$ such that
there exists a unique directed path in $T$ from any vertex $i$ to the root $n$.

\begin{theorem}[Tutte's Matrix-Tree Theorem for Digraphs(\cite{margoliash2010matrix}, Theorem 2.8)]\label{matrix-tree}
    Let $G$ be a sink-rooted digraph. Then the number of oriented spanning trees 
    %(or  reverse arborescence subgraphs) 
    of $G$  is equal to the determinant of the sink-reduced Laplacian $L$ of $G$.
\end{theorem}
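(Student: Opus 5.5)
We would prove Theorem~\ref{matrix-tree} by expanding $\det L$ multilinearly in its rows, so that the determinant becomes a sum over ``one outgoing edge per non-sink vertex'' choices. We would then show that each choice contributes $1$ if it forms an oriented spanning tree and $0$ otherwise.

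\textbf{Step 1 (row decomposition).} Let $e_1,\dots,e_{n-1}$ be the standard basis row vectors of $\mathbb{R}^{n-1}$, and set $e_n:=0$; this accounts for the deleted sink column. For a non-sink vertex $i$, each outgoing edge $\varepsilon$ of $G$ from $i$ to $h(\varepsilon)$ contributes the row vector $e_i-e_{h(\varepsilon)}$. We claim that row $i$ of $L$ equals
$$\sum_{\varepsilon:\, \mathrm{tail}(\varepsilon)=i}\bigl(e_i-e_{h(\varepsilon)}\bigr).$$
To check this, compare entries with the definition of $L$.
\begin{itemize}
\item The diagonal entry is $\deg^+(i)-a_{ii}$, because each loop at $i$ contributes $e_i-e_i=0$.
\item The off-diagonal entry in column $j$ is $-a_{ij}$.
\item Edges into the sink contribute only $e_i$, which is consistent with deleting column $n$.
\end{itemize}
Parallel edges are treated as distinct summands.

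\textbf{Step 2 (multilinear expansion).} Since the determinant is multilinear in the rows,
$$\det L=\sum_{\phi}\det M_\phi .$$
Here $\phi$ ranges over all maps assigning to each $i\in[n-1]$ one outgoing edge $\phi(i)$ of $i$. The matrix $M_\phi$ has $i$-th row $e_i-e_{h(\phi(i))}$. Each $\phi$ determines a subgraph $T_\phi$ with exactly one out-edge at every non-sink vertex and none at the sink. Such a subgraph is an oriented spanning tree rooted at $n$ exactly when it has no directed cycle; loops count as cycles. Conversely, every oriented spanning tree arises from exactly one $\phi$. It therefore suffices to prove
$$\det M_\phi=\begin{cases}1,&\text{if } T_\phi \text{ is acyclic},\\ 0,&\text{otherwise}.\end{cases}$$

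\textbf{Step 3 (evaluating $\det M_\phi$).}
\begin{itemize}
\item \emph{Cyclic case.} Suppose $T_\phi$ contains a directed cycle $i_1\to i_2\to\cdots\to i_r\to i_1$. None of these vertices is the sink, since the sink has no out-edge in $T_\phi$. The rows indexed by $i_1,\dots,i_r$ are $e_{i_k}-e_{i_{k+1}}$, and they telescope to $0$. Hence the rows are linearly dependent and $\det M_\phi=0$. A loop is the case $r=1$, where the row itself is zero.
\item \emph{Acyclic case.} Every vertex reaches the sink along a unique path. Order the non-sink vertices by decreasing distance to the sink, and apply the same permutation to rows and columns; this does not change the determinant. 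Each row then has $1$ on the diagonal and at most one $-1$, located in the column of the parent vertex. The parent is strictly closer to the sink, so that column comes later in the order, or the entry is absent if the parent is the sink. The reordered matrix is therefore upper unitriangular, and $\det M_\phi=1$.
\end{itemize}

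\textbf{Main obstacle.} The only delicate point is the bookkeeping in Step 1. One must verify that loops, multiple edges, and edges into the sink are all correctly encoded by the row decomposition, so that the expansion matches $L$ exactly as defined in the paper, including the diagonal entry $\deg^+(i)-a_{ii}$. Once that is in place, the sign issue in the acyclic case is removed by the simultaneous row and column reordering, and the theorem follows by summing over $\phi$.
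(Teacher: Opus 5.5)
The paper gives no proof of this statement. It is quoted from \cite{margoliash2010matrix} and used only as a black box, together with the Postnikov--Shapiro count of $G$-parking functions, in the proof of Theorem~\ref{count_building-func}.

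Your argument is a correct, self-contained proof, and it is the standard one:
\begin{itemize}
\item expand $\det L$ multilinearly over choices $\phi$ of one out-edge per non-sink vertex;
\item discard each $\phi$ whose functional graph has a directed cycle, using the telescoping relation among the cycle rows;
\item evaluate each acyclic $\phi$ as a unitriangular matrix after a simultaneous row/column reordering by distance to the sink.
\end{itemize}
This is the usual induction on the number of edges, carried out in a single expansion instead of one vertex at a time. (That induction splits the out-edges of a vertex of out-degree at least $2$ and uses linearity of $\det$ in that row.) Your row decomposition correctly reproduces the paper's convention $l_{ii}=\deg^+(i)-a_{ii}$. So loops, parallel edges and edges into the sink are all handled, and a vertex of out-degree $0$ correctly gives an empty sum.

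The one thing to make explicit is the definition you use in the sentence ``every oriented spanning tree arises from exactly one $\phi$''. That sentence needs an oriented spanning tree to have exactly one out-edge at each non-root vertex and none at the root. This is the standard meaning, but the paper's phrase ``a unique directed path from any vertex to the root'' does not force it if paths are taken to be simple. A tree plus a loop, or plus an edge from a vertex to one of its descendants, still has unique simple paths to the root. For instance, take sink $2$ with edges $1\to 2$ and a loop at $1$. That reading gives two trees, while $\det L=1$. State the standard definition explicitly, e.g.\ that the underlying undirected graph of $T$ is a spanning tree with all edges directed toward the root; with that, your proof is complete.
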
 
\begin{definition}[$G$-parking function]\label{G-parking}
    For a subset $I$ in $\{1, \dots, n\}$ and a vertex $i \in I$, let
\begin{align*}
d_I(i) = \sum_{j \notin I} a_{ij},
\end{align*}
i.e., $d_I(i)$ is the number of edges from the vertex $i$ to a vertex outside of the subset $I$. Let us say that a sequence $b = (b_1, \dots, b_n)$ of nonnegative integers is a $G$-parking function if, for any nonempty subset $I \subseteq \{1, \dots, n\}$, there exists $i \in I$ such that $b_i < d_I(i)$.
\end{definition}
If $G = K_{n+1}$ is the complete graph on $n+1$ vertices, then $K_{n+1}$-parking functions are the usual parking functions of size $n$.

\begin{theorem}(\cite{Alex-Boris}, Theorem 2.1)\label{no of G parking}
    The number of $G$-parking functions equals the number of oriented spanning trees of the digraph $G$.
\end{theorem}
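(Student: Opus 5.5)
The plan is to prove the statement bijectively with a \emph{search} (burning) algorithm, in the spirit of Dhar's burning algorithm and the original argument of Postnikov--Shapiro. Here $G$ has non-sink vertices $1,\dots,n$ and a sink $0$, and $d_I(i)$ counts edges from $i$ into $([n]\setminus I)\cup\{0\}$.

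\textbf{Fixing the data.} Fix once and for all a total order on the multiset of edges leaving each vertex $i$. The order is first by the visit time of the head (determined during the algorithm), and then by a fixed linear order among the parallel edges to the same head.

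\textbf{From parking functions to trees.} Given a $G$-parking function $b$, build a sequence of visited sets $V_0=\{0\}\subset V_1\subset\cdots\subset V_n$ together with parent edges. At stage $t$ let $U=[n]\setminus V_{t-1}$ be the unvisited vertices. For $i\in U$ the number of edges from $i$ into $V_{t-1}$ is exactly $d_U(i)$. The parking condition for $I=U$ gives some $i\in U$ with $b_i<d_U(i)$. Among all such vertices, choose the one whose $(b_i+1)$-st edge into $V_{t-1}$ (in the fixed order) has the earliest-visited head; break ties by smallest label. Add this vertex to $V_t$ and declare that $(b_i+1)$-st edge to be its parent edge.

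Because every vertex receives exactly one outgoing parent edge pointing to an earlier-visited vertex, the result is an oriented spanning tree rooted at $0$. The parking condition is used precisely to guarantee that the process never stalls before all vertices are visited. This mirrors the proof of Lemma~\ref{equivalence}.

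\textbf{From trees to parking functions.} Given an oriented spanning tree $T$, first recover the visit order by running the same greedy rule, now reading parents from $T$. At each stage, the candidates are the vertices whose $T$-parent is already visited, and the selection rule (earliest-visited parent, then the tie-break) is identical. Then set $b_i$ to be the position of $i$'s tree edge among its edges into the set visited before $i$, minus one.

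One checks that $b_i<d_U(i)$ holds at the moment $i$ is visited. Conversely, I need that if some $I$ violated the parking condition, the first vertex of $I$ to be visited would contradict this inequality; this shows that $b$ is a $G$-parking function.

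\textbf{Main obstacle.} The hard step is proving that the two maps are mutually inverse. This amounts to showing that the selection rule in the forward map depends on $b$ only through data the tree also determines. Concretely, it must be a ``queue'' rule: a vertex is processed when the first of its eligible parent-edges appears. I would prove this by induction on $t$, showing that both procedures produce the same $V_t$ and the same parent edge at each stage. The key point is that a vertex $i$ becomes eligible in the forward map exactly when $V_{t-1}$ contains the head of its $(b_i+1)$-st edge, which is the same event as ``its $T$-parent has been visited.''

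If this queue-based tie-breaking proves awkward, the fallback is algebraic. One shows that $G$-parking functions are exactly the superstable configurations, and that these form a system of representatives of $\mathbb{Z}^n/L\mathbb{Z}^n$ for the sink-reduced Laplacian $L$. The count is then $\det L$, and Theorem~\ref{matrix-tree} finishes the argument.
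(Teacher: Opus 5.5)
Your proof is correct. The paper contains no proof of this statement to compare against: it imports Theorem~2.1 of Postnikov--Shapiro and uses it only to pass, via Theorem~\ref{matrix-tree}, to $\det L$ in the proof of Theorem~\ref{count_building-func}. Your proposal is therefore a self-contained argument, not a reconstruction of one in the paper.

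Your ``main obstacle'' goes away once you state explicitly the fact behind your queue remark. Edges out of $i$ are ordered by the visit time of their heads, so the edges from $i$ into $V_{t-1}$ always form an initial segment of $i$'s eventual edge order. Hence the $(b_i+1)$-st edge, once it exists, never changes. This has three consequences:
\begin{itemize}
\item $i$ becomes a candidate exactly when the head of that edge is visited.
\item The forward rule is then breadth-first search on the tree it builds, with siblings taken in label order.
\item Your induction on $t$ goes through verbatim in both directions.
\end{itemize}
The parking check in the tree-to-function direction is just $d_I(i)\ge d_U(i)>b_i$ for the first visited $i\in I$, which holds because $I\subseteq U$.

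Your fallback also works, provided you keep the lattice as $L\mathbb{Z}^n$ and not $L^{T}\mathbb{Z}^n$. The rows of $L$ record out-edges, so subtracting $L\mathbf{1}_S$ removes exactly $d_S(i)$ chips from each $i\in S$. That is why superstables are precisely the $G$-parking functions; with $L^{T}$ the set-firing condition would involve in-edges instead. Uniqueness of the superstable in each class follows from the maximal-script argument. Existence follows because $L$ is a nonsingular M-matrix when every vertex reaches the sink; if some vertex does not, both counts are $0$.

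The two routes buy different things. The bijection is explicit, but it depends on the chosen orders of parallel edges. The lattice route shows directly that the number of $G$-parking functions is $\det L$. That is the only consequence the paper actually uses, so for the count of $\mathbf{c}$-building functions the detour through oriented spanning trees and Tutte's theorem could be skipped entirely.
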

% by By Tutte's Directed Matrix-Tree Theorem \ref{matrix-tree}, this count is the determinant of the reduced Laplacian matrix $L$.

\begin{theorem}\label{count_building-func}
The number of $\mathbf{c}$-building functions of length $n$ associated with the capacity vector $\mathbf{c} = (c_1, c_2, \dots, c_n)$ is given by
$( 1 + C )^{n-1}$, where $\displaystyle C=\sum_{j=1}^n c_j.$
\end{theorem}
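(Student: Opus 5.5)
The plan is to realize $\mathbf{c}$-building functions as $G$-parking functions for a suitable multigraph and then invoke Theorem \ref{no of G parking} and Theorem \ref{matrix-tree}. Let $G$ be the digraph on $n+1$ vertices $\{1,\dots,n,n+1\}$, with $n+1$ the sink. For $i\neq j$ in $[n]$, put $c_j$ parallel edges from $i$ to $j$. From each $i\in[n]$, put $c_i+1$ edges to the sink. There are no loops.

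First we compare the subset conditions. For a nonempty $S\subseteq[n]$ and $i\in S$, the number of edges from $i$ leaving $S$ (counting the sink as outside $S$) is
\[
d_S(i)=\sum_{j\in[n]\setminus S} c_j + (c_i+1),
\]
so that $b_i<d_S(i)$ is equivalent to $b_i-c_i\le \sum_{j\notin S}c_j$. This suggests setting $b_i=a_i+c_i$. This map is injective, but it is not onto all of $\mathbb{Z}_{\ge 0}^n$, since it misses tuples with $b_i<c_i$. To fix this, I would use a graph where the out-degree is exactly what Lemma \ref{equivalence} requires: put $c_j$ edges from $i$ to each $j\in[n]$, including loops $i\to i$, and exactly one edge to the sink. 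Loops never leave $S$, so
\[
d_S(i)=\sum_{j\notin S, j\le n}c_j+1,
\]
and $a_i<d_S(i)$ is equivalent to $a_i\le\sum_{j\notin S}c_j$. By Lemma \ref{equivalence}, the $\mathbf{c}$-building functions are then exactly the $G$-parking functions in the sense of Definition \ref{G-parking}, taking $I=S$. I need to check that Theorem \ref{no of G parking} allows loops. Loops add equally to $\deg^+(i)$ and $a_{ii}$, so they cancel in the reduced Laplacian. If the cited result requires loopless graphs, I would delete the loops, which changes neither $d_S$ nor $L$.

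Next, by Theorem \ref{matrix-tree}, the count is $\det L$. Here $L$ is the $n\times n$ matrix with diagonal entries $\deg^+(i)-a_{ii}=(C+1)-c_i$ and off-diagonal entries $-c_j$ in column $j$. Hence
\[
L=(C+1)I_n-\mathbf{1}\mathbf{c}^{T},
\]
where $\mathbf{1}$ is the all-ones column. By the matrix determinant lemma, or because $\mathbf{1}\mathbf{c}^T$ has rank one with eigenvalue $\mathbf{c}^T\mathbf{1}=C$,
\[
\det L=(C+1)^{n}\Bigl(1-\tfrac{C}{C+1}\Bigr)=(C+1)^{n-1}.
\]

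The main obstacle is the first step: choosing edge multiplicities so that the strict inequality in Definition \ref{G-parking} matches the weak inequality of Lemma \ref{equivalence}, with the edges counted as leaving $S$ being exactly $\sum_{j\notin S}c_j+1$. The loop trick, with $c_i$ loops at $i$, is what makes the diagonal entry come out as $C+1-c_i$ and produces the rank-one structure. I would also check the normalization. The determinant computation is routine, and when $c_j\equiv k$ it recovers $(1+kn)^{n-1}$, consistent with Note \ref{gen of parking}.
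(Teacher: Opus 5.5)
Your proof is correct and is essentially the paper's argument. Once the loops are deleted (which you note is harmless), your digraph is exactly the paper's: one edge to the sink and $c_j$ edges $i\to j$ for $j\neq i$. It gives the same $d_S(i)=1+\sum_{j\notin S}c_j$, the same reduced Laplacian $L=(1+C)I-\mathbf{1}\mathbf{c}^T$, and the same matrix-determinant-lemma evaluation. The loop device is unnecessary rather than essential, since the loopless graph already has $\deg^+(i)=1+C-c_i$ and so yields the diagonal entry $C+1-c_i$ directly.
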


\begin{proof}
We reformulate the problem via directed graphs. Let $a = (a_1, \dots, a_n) \in \mathbb{Z}_{\ge 0}^n$ is a $\mathbf{c}$-building function. By Lemma \ref{equivalence} we have
the sequence condition
$$
a_i < 1 + \sum_{j \notin S} c_j.
$$ This mirrors the survival condition of a $G$-parking function \ref{G-parking} on a directed multigraph with vertex set $V = \{q, 1, 2, \dots, n\}$, where $q$ is a designated sink.

Construct the edges of $G$ as follows:
\begin{itemize}
    \item From every vertex $i \in [n]$, draw exactly $1$ directed edge to the sink $q$.
    \item From every vertex $i \in [n]$, draw $c_j$ directed edges to vertex $j$ (for all $i \neq j$).
\end{itemize}
For any $S \subseteq [n]$, the out-degree of a vertex $i \in S$ to vertices outside of $S$ is 
$$
\text{deg}_{V \setminus S}(i) = 1 + \sum_{j \notin S} c_j.
$$ 
By Theorem \ref{no of G parking}, the number of valid sequences $a$ is equal to the number of oriented spanning trees of the digraph $G$ rooted at $q$.

By Tutte's Directed Matrix-Tree Theorem \ref{matrix-tree}, this count is the determinant of the reduced Laplacian matrix $L$ obtained by deleting the row and column corresponding to $q$. For $i, j \in [n]$, the entries of the $n \times n$ matrix $L$ are:
$$ L_{ij} = -c_j \quad (i \neq j), \quad
L_{ii} = 1 + \sum_{k \neq i} c_k = 1 + C - c_i.$$
 Therefore we can write $L = (1 + C)I - \mathbf{1}\mathbf{c}^T,$
where $I$ is the identity matrix, $\mathbf{1}$ is the all-ones column vector, and $\mathbf{c}^T = (c_1, \dots, c_n)$. 
Applying the Matrix Determinant Lemma we get,
%$\det(A - \mathbf{u}\mathbf{v}^T) = \det(A)(1 - \mathbf{v}^T A^{-1} \mathbf{u})$:
\begin{align*}
\det(L) &= \det((1+C)I) \left( 1 - \mathbf{c}^T \left( \frac{1}{1+C} I \right) \mathbf{1} \right) \nonumber \\
&= (1+C)^n \left( 1 - \frac{1}{1+C} \sum_{j=1}^n c_j \right) \nonumber \\
&= (1+C)^n \left( 1 - \frac{C}{1+C} \right) \nonumber \\
&= (1+C)^n \left( \frac{1}{1+C} \right) = (1+C)^{n-1}.
\end{align*}
This completes the proof.
\end{proof}

\section{Asymmetric $\mathbf{c}$-Catalan Arrangement}\label{sec:ACA}

In this section, we study the asymmetric $\mathbf c$-Catalan arrangement.This naturally extends the previously studied $k$-Catalan arrangements.
Our main goal is to give a bijective description of the regions of this arrangement. To this end, we establish a bijection between its regions and the $\mathbf{c}$-labelled Dyck paths introduced in Section \ref{sec: c-building}.

Given a tuple $\mathbf{c}=(c_1,c_2,\dots,c_n)$, we will show that the regions of the asymmetric $\mathbf{c}$-Catalan arrangement are in bijection with the $\mathbf{c}$-labelled Dyck paths.
Recall that the  asymmetric $\mathbf{c}$-Catalan arrangement $\mathcal{A}_{\mathrm{Cat}}^{\mathbf{c}}$ is the hyperplane arrangement defined in Definition~\ref{catalan-kind}. 

For each $i\in[n]$, define $\mathbf{m}=(m_1,m_2,\ldots , m_n)$ such that
$$
m_i=
\begin{cases}
\dfrac{c_i+1}{2}, & \text{if } c_i \text{ is odd},\\[6pt]
\dfrac{c_i}{2}, & \text{if } c_i \text{ is even}.
\end{cases}
$$ and let $I=\{i\in[n]\mid c_i\text{ is odd}\}$.  The asymmetric $\mathbf{c}$-Catalan arrangement is the hyperplane arrangement
$$ \mathcal{A}_{\mathrm{Cat}}^{\mathbf{c}}=\{ x_i-x_j \in [-m_{i}-m_{j}+\mathds{1}_{I}(i),m_i+m_j-\mathds{1}_{I}(j)]\}
.$$
We denote the set of regions of asymmetric $\mathbf{c}$-Catalan arrangement by $\mathcal{R}_{C}(\mathbf{c})$.
% Given a tuple $\mathbf{c}=(c_1,c_2,\dots,c_n)$, we show that there exists a hyperplane arrangement whose regions are in bijective correspondence with the $\mathbf{c}$-labelled Dyck paths and $\mathbf{c}$-building function. 

% Throughout this section, for each $i\in[n]$, define
% $$
% m_i=
% \begin{cases}
% \dfrac{c_i+1}{2}, & \text{if } c_i \text{ is odd},\\[6pt]
% \dfrac{c_i}{2}, & \text{if } c_i \text{ is even},
% \end{cases}
% $$
% and let 
% $$
% I=\{i\in[n]\mid c_i\text{ is even}\}.
% $$
\begin{example}\label{ex:catalan_regions}
Let us consider an example where $n=3$, $\mathbf{c}=(4,2,1)$, hence $\mathbf{m}=(2,1,1)$ and $I=\{3\}.$  Therefore 
\[
\mathcal{A}_{\mathrm{Cat}}^{\mathbf{c}}=\left\{
x_1-x_2 \in [-3,3],\quad 
x_1-x_3 \in[-3,2],\quad
x_2-x_3 \in[-2,1]\right\}.
\] 
The arrangement is shown in Figure \ref{fig:catalan_regions}. It has a total of $90$ regions. 
\begin{figure}[htbp]
\centering
\begin{tikzpicture}[xscale=1,yscale=1,scale=0.57]

\colorlet{redline}{red!80!black}
\colorlet{greenline}{green!50!black}

%---------------------------------------------------
% Hyperplanes x1-x2=c
%---------------------------------------------------
\foreach \c in {-3,-2,-1,1,2,3}
{
    \draw[redline,thick] (\c,-8) -- (\c,8);
}

% Central hyperplane x1=x2
\draw[redline,very thick] (0,-8) -- (0,8);
\node[text=redline, above] at (0,8.1) {$x_1=x_2$};

%---------------------------------------------------
% Hyperplanes x2-x3=d
%---------------------------------------------------
\foreach \d in {-1,1,2}
{
    \draw[blue,thick] (-8,{-4-\d}) -- (8,{4-\d});
}

% Central hyperplane x2=x3
\draw[blue,very thick] (-8,-4) -- (8,4);
\node[text=blue,rotate=26.565] at (9.2,4.6) {$x_2=x_3$};

%---------------------------------------------------
% Hyperplanes x1-x3=e
%---------------------------------------------------
\foreach \e in {-2,-1,1,2,3}
{
    \draw[greenline,thick] (-8,{4-\e}) -- (8,{-4-\e});
}

% Central hyperplane x1=x3
\draw[greenline,very thick] (-8,4) -- (8,-4);
\node[text=greenline,rotate=-26.565] at (-9.2,4.6) {$x_1=x_3$};

%---------------------------------------------------
% Origin
%---------------------------------------------------
\fill (0,0) circle (3pt);

%---------------------------------------------------
% Chamber labels
%---------------------------------------------------
\node at (-5.2,7) {$x_2>x_1>x_3$};

\node at (5.2,7) {$x_1>x_2>x_3$};

\node at (8.2,0.55) {$x_1>x_3>x_2$};

\node at (-5.2,-6.8) {$x_3>x_2>x_1$};

\node at (-8.2,-0.55) {$x_2>x_3>x_1$};

\node at (5.2,-6.9) {$x_3>x_1>x_2$};

\node at (-0.3,0.6) {A};
%\node at (2.5, 1.8) {B};
        
\end{tikzpicture}
\caption{Asymmetric $\mathbf{c}$-Catalan arrangement for $\mathbf{c}=(4,2,1)$}
\label{fig:catalan_regions}
\end{figure}
\end{example}
\begin{note}
    If we take $c_i=k$ for all $i\in [n]$ then $\mathcal{A}_{\mathrm{Cat}}^{\mathbf{c}}$  gives the $k$-Catalan arrangement (see \ref{k-catalan}).
\end{note}

%\begin{note}\label{c to m cat}
    % We now construct a bijection between the regions of the  asymmetric $\mathbf{c}$-Catalan arrangement and the $\mathbf{c}$-labelled Dyck paths (see \ref{c-dyckpath}). Therefore by Theorem \ref{m dyck}, number of $\mathbf{c}$-labelled Dyck paths is 
    % $$
    % \frac{n!}{C+1}\binom{C+n}{n}.
    % $$
    % where $\displaystyle C=\sum_{i=1}^nc_i$.
%\end{note}

We now establish a bijection between the regions of the asymmetric $\mathbf{c}$-Catalan arrangement and the $\mathbf{c}$-labelled Dyck paths. The construction is carried out in two steps. First, we define a bijection $\varphi_{\mathbf{c}}$ between the regions of the asymmetric $\mathbf{c}$-Catalan arrangement and the Catalan $\mathbf{c}$-ordered words. Next, we construct a bijection $\psi_{\mathbf{c}}$ from the Catalan $\mathbf{c}$-ordered words to the $\mathbf{c}$-labelled Dyck paths. Consequently, the composition $(\psi_{\mathbf{c}}\circ\varphi_{\mathbf{c}})$ gives the desired bijection between the regions of the asymmetric $\mathbf{c}$-Catalan arrangement and the $\mathbf{c}$-labelled Dyck paths.

\begin{note}\label{theo_cat_sol}
    Let $\mathbf{c}=2\mathbf{m}$, that is, $c_i=2m_i$ for all $i\in[n]$. Then the arrangement $\mathcal{A}_{\mathrm{Cat}}^{2\mathbf{m}}$ coincides with the arrangement considered in Equation \ref{theo_cat}, which was originally proposed by Theo Douvropoulos. Consequently, the number of regions of $\mathcal{A}_{\mathrm{Cat}}^{2\mathbf{m}}$ is equal to the number of $2\mathbf{m}$-labelled Dyck paths.
\end{note}

\subsection{Bijection between regions of asymmetric $\mathbf{c}$-Catalan arrangement and the  Catalan $\mathbf{c}$-ordered word}

\begin{definition}\label{defn:gen_cat_m_ord_seqn}
A \textit{Catalan $\mathbf{c}$-ordered
word} is an ordering of the set of all  \textit{alphabets}
$$
\begin{aligned} 
    A(\mathbf{c})=&\{a_i^{(t)}\mid i\in [n],\ t\in[-m_i,m_i-\mathds{1}_I(i)]\}\\
    =&\{a_i^{(t)}\mid i\notin I,\ t\in[-m_i,m_i]\} \cup \{a_i^{(t)}\mid i\in I,\ t\in[-m_i,m_i-1]\},
\end{aligned}
$$
satisfying the following conditions:
\begin{itemize}
    \item For every $i\in[n]$ and every $t$ such that
    $a_i^{(t-1)}$ and $a_i^{(t)}$ belong to $A(\mathbf{c})$,
    the letter $a_i^{(t-1)}$ appears before $a_i^{(t)}$.

    \item For all $i,j\in[n]$, and all $t_i,t_j$ such that
    $a_i^{(t_i-1)},a_i^{(t_i)},a_j^{(t_j-1)},a_j^{(t_j)}$ belong to
    $A(\mathbf{c})$, the relative order of
    $a_i^{(t_i)}$ and $a_j^{(t_j)}$ is the same as the relative order of
    $a_i^{(t_i-1)}$ and $a_j^{(t_j-1)}$; equivalently,
    $a_i^{(t_i)}$ appears before $a_j^{(t_j)}$ if and only if $a_i^{(t_i-1)}$ appears before $a_j^{(t_j-1)}$.
\end{itemize}
The set of all Catalan $\mathbf{c}$-ordered words is denoted by $\mathcal{W}_C(\mathbf{c})$.
\end{definition}

We now define the map $\varphi_{\mathbf{c}}$ from $\mathcal{R}_{C}(\mathbf{c})$, the set of regions of the asymmetric $\mathbf{c}$-Catalan arrangement, to $\mathcal{W}_{C}(\mathbf{c})$, the set of Catalan $\mathbf{c}$-ordered words. 

\medskip

\noindent \textbf{The map $\varphi_{\mathbf{c}}:\mathcal{R}_{C}(\mathbf{c})\rightarrow \mathcal{W}_{C}(\mathbf{c})$:} Let $\Delta\in\mathcal{R}_{C}(\mathbf{c})$, and choose a point $(x_1,\dots,x_n)\in \Delta$. We construct the Catalan $\mathbf{c}$-ordered word corresponding to $\Delta$ as follows. 

\begin{enumerate}
\item For each pair of distinct indices $i,j\in[n]$, exactly one of the
following holds, according to the membership of $i$ and $j$ in $I$. There
\begin{itemize}
\item there exists an integer $k\in[-m_i-m_j+\mathds{1}_{I}(i),m_i+m_j-\mathds{1}_{I}(j)-1]$ such that $k<x_i-x_j<k+1$,
\item $x_i-x_j<-(m_i+m_j)+\mathds{1}_{I}(i)$, or
\item $x_i-x_j>m_i+m_j-\mathds{1}_{I}(j)-1$.
\end{itemize}

Consequently, the point $(x_1,\dots,x_n)$ determines a total ordering of the set 
$$
\{x_i+t\mid i\in I,\, t\in[-m_i,m_i]\}\cup \{x_i+t\mid i\notin I,\, t\in[-m_i,m_i-1]\}.
$$ 
Since every point in the region $\Delta$ satisfies the same defining inequalities, this ordering is independent of the choice of the point in $\Delta$.

\item If the $r$-th element in the above ordering is $x_i+t$, then define $w_r=a_i^{(t)}$. The map $\varphi_{\mathbf{c}}$ corresponds the region $\Delta$ to the Catalan $\mathbf{c}$-ordered word $\mathbf{w}=w_1w_2\cdots w_{C+n}$.
\end{enumerate}

Before we prove that $\varphi_{\mathbf{c}}$ is a bijection, we explain the above construction through the following example.

\begin{example}
    For $\mathbf{c}=(4,2,1)$, the asymmetric $\mathbf{c}$-Catalan arrangement  $$\mathcal{A}_{\mathrm{Cat}}^{\mathbf{c}}=\{ x_1-x_2 \in [-3,3],\quad \,x_1-x_3 \in [-3,2],\quad x_2-x_3 \in [-2,1]\} $$ is shown in the Figure \ref{fig:catalan_regions}.

\noindent Consider the region marked as A, we have $$-1<x_1-x_2<0, \quad 0<x_1-x_3<1,\quad  0<x_2-x_3<1.$$ By rewriting we get,
$$x_1-2<x_3-1<x_1-1<x_2-1<x_3<x_1<x_2<x_1+1<x_2+1<x_1+2.$$ 
This gives Catalan $\mathbf{c}$-ordered word $$a_1^{(-2)}a_3^{(-1)}a_1^{(-1)}a_2^{(-1)}a_3^{(0)}a_1^{(0)}a_2^{(0)}a_1^{(1)}a_2^{(1)}a_1^{(2)}.$$ 
% \noindent Consider the region marked as B, we have $$2<x_1-x_2<3, \quad 3<x_1-x_3,\quad  0<x_2-x_3<1 .$$ By rewriting we get, 
% $$x_3-1<x_3<x_2-1<x_2<x_2+1<x_1-2<x_1-1<x_1<x_1+1<x_1+2.$$
% This gives Catalan $\mathbf{c}$-ordered word
% $$a_1^{(-2)}a_1^{(-1)}a_3^{(-1)}a_2^{(-1)}a_3^{(0)}a_1^{(0)}a_2^{(0)}a_1^{(1)}a_2^{(1)}a_1^{(2)}.$$ 
Similarly, we can do for other regions. Every region corresponds to a unique Catalan $\mathbf{c}$-ordered word.
\end{example}

\begin{theorem}\label{thm:cat_region_and_words}
The map $\varphi_{\mathbf{c}}$ is a bijection between
$\mathcal{R}_{C}(\mathbf{c})$ and $\mathcal{W}_{C}(\mathbf{c})$.
\end{theorem}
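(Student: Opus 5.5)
The plan is to verify three things in turn: that $\varphi_{\mathbf{c}}$ is well defined with image in $\mathcal{W}_C(\mathbf{c})$, that it is injective, and that it is surjective. Throughout, the indexing set of the shifts $x_i+t$ is read as matching the alphabet $A(\mathbf{c})$, i.e.\ $t\in[-m_i,m_i-\mathds{1}_I(i)]$. The displayed index ranges in the definition of $\varphi_{\mathbf{c}}$ appear to have $I$ and its complement swapped, and I would correct this silently.

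\emph{Well-definedness.} Take $x_i+t$ and $x_j+s$ with $t\in[-m_i,m_i-\mathds{1}_I(i)]$ and $s\in[-m_j,m_j-\mathds{1}_I(j)]$. Equality would mean $x_i-x_j=s-t$. As $s,t$ vary, $s-t$ ranges exactly over
\[
[-m_i-m_j+\mathds{1}_I(i),\,m_i+m_j-\mathds{1}_I(j)],
\]
which is precisely the set of constants of the hyperplanes $x_i-x_j=k$ in $\mathcal{A}_{\mathrm{Cat}}^{\mathbf{c}}$. Hence no two of these numbers coincide on a region $\Delta$. Moreover, the sign of $x_i+t-x_j-s$ is constant on $\Delta$, because $\Delta$ lies on a fixed side of every hyperplane. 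Therefore the ordering, and hence the word $\mathbf{w}$, is independent of the chosen point.

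\emph{Image lies in $\mathcal{W}_C(\mathbf{c})$.} The first axiom holds because $x_i+t-1<x_i+t$. The second holds because $x_i+t_i<x_j+t_j$ if and only if $x_i+t_i-1<x_j+t_j-1$.

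\emph{Injectivity.} A word $\mathbf{w}$ records, for every pair $i,j$ and every admissible $s,t$, whether $x_i-x_j<s-t$ or $x_i-x_j>s-t$. Since $s-t$ exhausts all hyperplane constants for the pair $(i,j)$, this determines on which side of every hyperplane of $\mathcal{A}_{\mathrm{Cat}}^{\mathbf{c}}$ the region lies. A region is determined by these sides, so $\varphi_{\mathbf{c}}$ is injective.

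\emph{Surjectivity.} Given $\mathbf{w}\in\mathcal{W}_C(\mathbf{c})$, I must produce a real point $(x_1,\dots,x_n)$ such that $x_i+t<x_j+s$ whenever $a_i^{(t)}$ precedes $a_j^{(s)}$ in $\mathbf{w}$. This is a system of strict difference constraints
\[
x_j-x_i>t-s,
\]
and it suffices to impose them only for consecutive letters of $\mathbf{w}$, since transitivity then gives the rest. By the standard criterion for difference systems (Bellman--Ford/Farkas), the system is feasible if and only if there is no directed cycle of constraints whose integer right-hand sides sum to $\ge 0$. Such a cycle amounts to a cyclic chain
\[
x_{i_1}+t_1<x_{i_2}+s_2,\quad x_{i_2}+t_2<x_{i_3}+s_3,\quad\ldots,
\]
with total shift $\sum(s_r-t_r)\le 0$. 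To rule this out, I would use the second axiom as a translation invariance. It says that the relative order of $a_i^{(t)}$ and $a_j^{(s)}$ depends only on $i$, $j$ and the difference $s-t$, whenever the relevant letters exist. So I would define a relation $i\prec_d j$ meaning ``$a_i^{(t)}$ precedes $a_j^{(t+d)}$ for some, equivalently all, admissible $t$''. Using this relation, each inequality of the cycle can be re-expressed with shifted letters so that consecutive inequalities share a letter. This turns the cycle into a chain $w_{p_1}<w_{p_2}<\cdots<w_{p_1'}$ in the word, where $w_{p_1'}$ is the starting letter shifted by $\sum(s_r-t_r)\le 0$. That contradicts the first axiom or the irreflexivity of a linear order.

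A more constructive alternative, which I would try if the cycle argument becomes messy, is to build the point directly. For the pair of first letters $a_i^{(-m_i)}$, $a_j^{(-m_j)}$, the word determines an integer interval $(k,k+1)$, or a half-line, containing $x_i-x_j$. I would then show that these intervals are consistent in the sense $k_{ij}+k_{jl}\in\{k_{il},k_{il}-1\}$. Finally I would realize them by assigning $x_i=\lfloor\cdot\rfloor+\text{fractional part}$, where the fractional parts are ordered according to the order of the letters $a_i^{(t)}$ within a single integer window.

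\emph{Main obstacle.} The hard step is the shifting argument in surjectivity. The alphabet is truncated differently for each $i$, with range $[-m_i,m_i-\mathds{1}_I(i)]$, so a shifted letter $a_i^{(t+d)}$ may fall outside $A(\mathbf{c})$, and the translation invariance is then unavailable. The remedy I would pursue is to handle only constraints whose difference $s-t$ lies in the hyperplane range for the pair. Every constraint outside that range is implied, without any shifting, by the extreme constraint, namely the relative order of $a_i^{(-m_i)}$ and $a_j^{(m_j-\mathds{1}_I(j))}$ or its reverse. These extreme constraints yield the half-line cases of the map $\varphi_{\mathbf{c}}$. It then remains to check that the shifts needed within the cycle argument stay inside the alphabet, and I would verify this by tracking the endpoint indices carefully.
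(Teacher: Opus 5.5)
\textbf{Surjectivity is not proved.} Your well-definedness, image, and injectivity arguments are correct, and you are right that the displayed index sets in the definition of $\varphi_{\mathbf c}$ have $I$ and its complement swapped. The gap is in surjectivity. All of the difficulty is deferred to checking ``that the shifts needed within the cycle argument stay inside the alphabet'', and for a general cycle that check fails.

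Write the letters of $i$ as $a_i^{(-m_i+k)}$ with offset $k\in[0,c_i]$. Take a cycle $i_1\to i_2\to\cdots\to i_r\to i_1$ in which the word forces $a_{i_s}^{(-m_{i_s}+k)}$ before $a_{i_{s+1}}^{(-m_{i_{s+1}}+l)}$ whenever $l-k\ge w_s$, with $\sum_s w_s\le 0$. Your contradictory chain $a_{i_1}^{(\cdot)}<a_{i_2}^{(\cdot)}<\cdots<a_{i_1}^{(\cdot)}$ needs offsets $k_s\in[0,c_{i_s}]$ with $k_{s+1}-k_s\ge w_s$ cyclically. Such offsets exist only if every partial sum $w_s+\cdots+w_{t-1}$ is at most $c_{i_t}$. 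A nonpositive total does not guarantee this: weights $(1,1,-1,-1)$ with $c_{i_3}=1$ force $k_3\ge k_1+2\ge 2$.

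So the cycle you are handed need not yield a contradiction, and you would have to replace it by a better one. The obvious repair is to take a shortest nonpositive cycle and shorten it along chords, using the threshold the word fixes for each pair. This does not close the gap by itself. If the chord $\{i_s,i_t\}$ is of half-line type, with all letters of $i_t$ before all letters of $i_s$, its only constraint gives the shortened cycle weight $w_s+\cdots+w_{t-1}-c_{i_t}>0$. Your fractional-part fallback meets the same obstruction, because $k_{ij}+k_{jl}\in\{k_{il},k_{il}-1\}$ is only meaningful when none of the three pairs is a half-line.

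\textbf{The paper's route is not a safe fallback.} The paper does not use cycles. It builds a point left to right: $z_1=0$, each new first letter is placed $\frac1{n+1}$ after its predecessor, and $a_i^{(t)}$ is placed at $x_i+t+m_i$. It then asserts that this point realizes $\mathbf w$, without verification. Take $\mathbf c=(1,1,1)$ and the valid word $a_1^{(-1)}a_2^{(-1)}a_1^{(0)}a_3^{(-1)}a_2^{(0)}a_3^{(0)}$. The recipe gives $x=(0,\tfrac14,\tfrac54)$ with $z_4=z_5$, which is a point on the hyperplane $x_3-x_2=1$. The word itself is realized, for example, by $(0,\tfrac12,\tfrac65)$.

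\textbf{A repair that avoids realization.} Counting closes the gap using results the paper proves independently. The map $\psi_{\mathbf c}$ sends $\mathcal W_C(\mathbf c)$ into $\mathcal D(\mathbf c)$, and it is injective by a first-difference argument that uses only the two axioms of Definition~\ref{defn:gen_cat_m_ord_seqn}. At the first position where two words differ, one of the two letters must be a first letter $a_i^{(-m_i)}$. Otherwise both predecessors lie in the common prefix, and the second axiom forces the same relative order in both words. Hence the two paths differ at that step. By Theorems~\ref{m dyck} and~\ref{char-catalan}, $|\mathcal D(\mathbf c)|=|\mathcal R_C(\mathbf c)|=\frac{n!}{C+1}\binom{C+n}{n}$. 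Together with your injectivity of $\varphi_{\mathbf c}$, this gives
\[
|\mathcal R_C(\mathbf c)|\le|\mathcal W_C(\mathbf c)|\le|\mathcal D(\mathbf c)|=|\mathcal R_C(\mathbf c)|,
\]
so $\varphi_{\mathbf c}$ is onto. This is not circular: the injectivity of $\psi_{\mathbf c}$ does not depend on $\varphi_{\mathbf c}$, even though the paper orders these steps the other way.
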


\begin{proof}
Since every point in a region $\Delta$ satisfies the same defining inequalities, the ordering used to define $\varphi_{\mathbf{c}}(\Delta)$ is independent of the choice of a point in $\Delta$. Thus, $\varphi_{\mathbf{c}}$ is well defined.

We first prove that $\varphi_{\mathbf{c}}$ is injective. Suppose that
$\Delta_1,\Delta_2\in\mathcal{R}_{C}(\mathbf{c})$ are distinct regions.
Then there exist $s\geq 0$ and a hyperplane $x_i-x_j=s$ separating
$\Delta_1$ and $\Delta_2$. Without loss of generality, assume that
$x_i-x_j<s$ on $\Delta_1$ and $x_i-x_j>s$ on $\Delta_2$.

\noindent\textbf{Case 1:} If $s\leq m_j-\mathds{1}_I(j)$, then
$a_i^{(0)}$ appears before $a_j^{(s)}$ in
$\varphi_{\mathbf{c}}(\Delta_1)$, whereas $a_i^{(0)}$ appears after
$a_j^{(s)}$ in $\varphi_{\mathbf{c}}(\Delta_2)$.

\medskip

\noindent\textbf{Case 2:} If $s>m_j-\mathds{1}_I(j)$, then
$a_i^{(-s+m_j-\mathds{1}_I(j))}$ appears before $a_j^{(m_j-\mathds{1}_I(j))}$ in
$\varphi_{\mathbf{c}}(\Delta_1)$, whereas $a_i^{(-s+m_j-\mathds{1}_I(j))}$ appears after $a_j^{(m_j-\mathds{1}_I(j))}$ in $\varphi_{\mathbf{c}}(\Delta_2)$. If $s\leq m_j-\mathds{1}_I(j)$, then $a_i^{(0)}$ appears before $a_j^{(s)}$ in $\varphi_{\mathbf{c}}(\Delta_1)$, whereas $a_i^{(0)}$ appears after $a_j^{(s)}$ in $\varphi_{\mathbf{c}}(\Delta_2)$.

\medskip
\noindent Thus, in both cases,
$$
\varphi_{\mathbf{c}}(\Delta_1) \neq \varphi_{\mathbf{c}}(\Delta_2).
$$
Therefore, distinct regions have distinct images under $\varphi_{\mathbf{c}}$, and hence $\varphi_{\mathbf{c}}$ is injective.

For the surjectivity of the map $\varphi_{\mathbf{c}}$, take any Catalan $\mathbf{c}$-ordered word of
$$
\mathbf{w}=w_1w_2\dots w_{C+n}\in\mathcal{W}_{C}(\mathbf{c}).
$$
We recursively construct a point $x=(x_1,x_2,\dots,x_n)\in\mathbb{R}^n$ such that the region $\Delta$ containing $x$ satisfies $\varphi_{\mathbf{c}}(\Delta)=\mathbf{w}$. To carry out the construction, we introduce an auxiliary sequence $z_1,\dots,z_{C+n}$ of real numbers, where $z_p$ denotes the value assigned to the $p$-th letter of $\mathbf{w}$. We need to follow these steps until all the $x_i$'s have been assigned a value. Begin by setting $z_1=0$. If $w_1=a_i^{(-m_i)}$, then define $x_i=0$. For each $p\in[2,C+n]$, proceed as follows. 

\noindent\textbf{Case 1:} If $w_p=a_i^{(-m_i)}$, define
$$
z_p=z_{p-1}+\frac{1}{n+1},
$$
and set $x_i=z_p$.

\medskip
\noindent\textbf{Case 2:} If $w_p=a_i^{(t)}$ with $t\in[-m_i+1,m_i-\mathds{1}_I(i)]$, define
$$
z_p=x_i+(t+m_i).
$$

\medskip
\noindent Then $\varphi_{\mathbf{c}}(\Delta)=\mathbf{w}$, proving that the map $\varphi_{\mathbf{c}}$ is surjective. Hence, $\varphi_{\mathbf{c}}$ is a bijection.
\end{proof}

\begin{note}\label{same size R, C}
    As we established a bijection between $\mathcal{R}_{C}(\mathbf{c})$ and $\mathcal{W}_{C}(\mathbf{c})$, the cardinality of both the sets are same. Therefore, by Theorem \ref{char-catalan} we have, the size of $\mathcal{W}_{C}(\mathbf{c})$ is  
    $$
    \frac{n!}{C + 1} \binom{C+n}{n}
    $$ 
    where $C=\displaystyle \sum_{i=1}^n c_i$. 
\end{note}

\subsection{Bijection between the  Catalan $\mathbf{c}$-ordered word and $\mathbf{c}$-labelled Dyck paths}

We now define a map from  $\mathcal{W}_{C}(\mathbf{c})$, the set of all Catalan $\mathbf{c}$-ordered words to $\mathcal{D}(\mathbf{c})$, the set of all $\mathbf{c}$-labelled Dyck paths. 

\medskip

\noindent \textbf{The map $\psi_{\mathbf{c}}:\mathcal{W}_{C}({\mathbf{c}})\rightarrow \mathcal{D}({\mathbf{c}})$:} Let $\mathbf{w}=w_1w_2\dots w_{C+n}\in \mathcal{W}_{C}(\mathbf{c})$ be a Catalan $\mathbf{c}$-ordered word. Construct a path $P_w$ from $(0,0)$ to $(C+n,0)$ following the below steps: 
\begin{enumerate}
    \item if $w_i=a_j^{(-m_j)}$ the $i$-th step in $P_w$ is a up step $U_j=(1,c_j)$.
    \item if $w_i=a_j^{(k)}$ where $k\in [-m_j+1, m_j]$ the $i$-th step in $P_w$ is $(1,-1)$. 
\end{enumerate}
We illustrate this map by the following example.

\begin{example}\label{example:catalan_word_to_dyck_path}
Let $\mathbf{c}=(4,2,1)$, hence $\mathbf{m}=(m_1,m_2,m_3)=(2,1,1)$. For the Catalan $\mathbf{c}$-ordered  word
$$a_1^{(-2)}a_3^{(-1)}a_1^{(-1)}a_2^{(-1)}a_3^{(0)}a_1^{(0)}a_2^{(0)}a_1^{(1)}a_2^{(1)}a_1^{(2)},$$ 
the path $P_w$ is  $
U_1\,U_3\,D\,U_2\,D\,D\,D\,D\,D\,D.$
\vspace{3mm}
\begin{center}
\begin{tikzpicture}[scale=0.7]

    % Optional: Faint grid for readability
    \draw[thin, gray!20, step=1] (0,0) grid (11.5, 6.5);

    % Axes
    \draw[->, >=stealth, thin] (-0.5,0) -- (12,0) node[right] {$x$};
    \draw[->, >=stealth, thin] (0,-0.5) -- (0,7) node[above] {$y$};

    % Continuous Path (Thicker and distinctly colored)
    \draw[blue!80!black, line width=1.2pt] 
        (0,0) -- (1,4) -- (2,5) -- (3,4) -- (4,6) -- (5,5) 
        -- (6,4) -- (7,3) -- (8,2) -- (9,1) -- (10,0);

    % Vertices (Drawn after the path so they sit neatly on top)
    \foreach \p in {(0,0), (1,4), (2,5), (3,4), (4,6), (5,5),(6,4), (7,3), (8,2), (9,1), (10,0)}
        \filldraw[black] \p circle (2pt);

    % Step Labels (Anchored relative to the midpoints of the up-steps)
    \node[above left] at (1, 3) {$U_1$};
    \node[above left] at (1.6, 4.3) {$U_3$};
    \node[above left] at (3.5, 4.8) {$U_2$};

    % Coordinate Labels (Safely anchored to the exact nodes)
    \node[below left] at (0,0) {$(0,0)$};
    \node[below] at (10,0) {$(10,0)$};

\end{tikzpicture}
\end{center}
\end{example}

By Theorem \ref{m dyck} and Note \ref{same size R, C} we have  the number of Catalan $\mathbf{c}$-ordered words is equal to  the number of $\mathbf{c}$-labelled Dyck paths, i.e., 
$$
|\mathcal{W}_{C}(\mathbf{c})|=|\mathcal{D}(\mathbf{c})|= \frac{n!}{C + 1} \binom{C+n}{n},
$$ 
where $C=\displaystyle \sum_{i=1}^n c_i$. Therefore, to prove that the map $\psi_{\mathbf{c}}:\mathcal{W}_{C}(\mathbf{c})\rightarrow\mathcal{D}(\mathbf{c})$, defined by $w\mapsto P_w$, is a bijection, it suffices to show that $\psi_{\mathbf{c}}$ is injective.

\begin{theorem}\label{thm:word_and_dyck}
The map $\psi_{\mathbf{c}}$ is a bijection between $\mathcal{W}_{C}(\mathbf{c})$ and $\mathcal{D}(\mathbf{c})$.
\end{theorem}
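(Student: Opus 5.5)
Since the paper has already shown $|\mathcal{W}_C(\mathbf{c})|=|\mathcal{D}(\mathbf{c})|$ (Theorem \ref{m dyck} together with Note \ref{same size R, C}), the plan is in three steps: check that $\psi_{\mathbf{c}}$ lands in $\mathcal{D}(\mathbf{c})$, prove injectivity, and conclude by cardinality.

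\textbf{Well-definedness.} The letters of $\mathbf{w}$ are grouped by index. For each $j$, the block of $a_j$-letters has $c_j+1$ elements, since
\[
2m_j+1-\mathds{1}_I(j)=c_j+1 .
\]
Its first letter $a_j^{(-m_j)}$ gives the up step $(1,c_j)$, and the remaining $c_j$ letters give down steps. So the path has $n$ up steps, exactly one labelled $U_j$ for each $j$, and $C$ down steps. The first condition of Definition \ref{defn:gen_cat_m_ord_seqn} says that $a_j^{(-m_j)}$ precedes every other $a_j^{(t)}$. Hence every down step can be charged to an earlier up step, each up step $U_j$ absorbing at most $c_j$ of them. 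Consequently every prefix contains at least as much up-height as down steps, so the path stays weakly above the $x$-axis and ends at $(C+n,0)$.

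\textbf{Injectivity.} I would show that $\mathbf{w}$ can be reconstructed from $P_w$. The path gives the positions and labels of all the letters $a_j^{(-m_j)}$, so it determines the relative order of the first letters of the blocks. Iterating the second condition of Definition \ref{defn:gen_cat_m_ord_seqn} gives the following: for each level $t$, the letters $a_j^{(t)}$ that exist appear in the same relative order as the $a_j^{(-m_j)}$. So the ``shape'' is determined, and what remains is to decide which letter occupies each down-step position.

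The key claim is that, reading left to right, at each down step the letter placed there is forced. The natural guess is that it is the unplaced non-initial letter that is earliest in a fixed priority order. This priority should be lexicographic: first by level $t$, then by the order of first occurrences, with an offset coming from $m_j$ so that different blocks are compared at matching shifted levels. Equivalently, I would prove a merging lemma. Suppose two words agree up to position $p-1$ and both have a down step at position $p$, but place different letters $a_i^{(s)}\neq a_j^{(t)}$ there. Then in one of the words $a_j^{(t)}$ comes after $a_i^{(s)}$, and in the other it comes before. Shifting both letters back to a common earlier level via the second condition gives a pair of letters whose order is already fixed in the common prefix, which is a contradiction.

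\textbf{Expected obstacle.} The main difficulty is that the second condition only relates letters at consecutive levels $t-1,t$. The blocks have different lengths $c_i+1$ and different starting levels $-m_i$, so comparing $a_i^{(s)}$ with $a_j^{(t)}$ requires descending both simultaneously until one of them reaches its first letter. This needs care when $s-(-m_i)\neq t-(-m_j)$, and it is where the geometric meaning (letters $x_i+t$ compared with $x_j+t'$) should guide the choice of descent. With that lemma in place, induction on $p$ gives $\mathbf{w}=\mathbf{w}'$ whenever $P_w=P_{w'}$. Injectivity plus equal cardinalities then yields the bijection.
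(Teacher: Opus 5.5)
Your proposal is correct and is essentially the paper's proof: well-definedness because each index contributes nonnegatively to the height of every prefix, injectivity by examining the first position where two words differ (your merging lemma is the contrapositive of the paper's claim that one of the two differing letters must be an initial letter $a_i^{(-m_i)}$), and bijectivity from $|\mathcal{W}_{C}(\mathbf{c})|=|\mathcal{D}(\mathbf{c})|$. The obstacle you anticipate does not arise: the second condition of Definition~\ref{defn:gen_cat_m_ord_seqn} holds for arbitrary levels $t_i,t_j$, and the predecessors $a_i^{(s-1)},a_j^{(t-1)}$ already lie in the common prefix by the first condition, so one backward step suffices; separately, your aside that every level appears in the same order as the initial letters is false when the $m_j$ differ (in the paper's example the initial letters occur in the order $1,3,2$ but the level-$0$ letters in the order $3,1,2$), although nothing in your argument depends on it.
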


\begin{proof}
We first show that the map is well defined. Let $\mathbf{w}=w_1w_2\cdots w_{C+n}\in\mathcal{W}_{C}(\mathbf{c})$. We must show that the lattice path constructed by $\psi_{\mathbf{c}}(w)$ never goes below the $x$-axis. 

For each $j\in[C+n]$, let $\mathbf{w}^{(j)}=w_1w_2\cdots w_j$ denote the prefix of $\mathbf{w}$ of length $j$. Consider an alphabet $a_i$ occurring in $\mathbf{w}^{(j)}$. There exists $r\in[-m_i,(m_i-\mathds{1}_I(i))]$ such that $a_i^{(-m_i)},\ldots,a_i^{(r)}$ occur in $w^{(j)}$, while $a_i^{(r+1)},\ldots,a_i^{(m_i-\mathds{1}_I(i))}$ do not. Hence, the net contribution of $a_i$ to the ordinate after reading $\mathbf{w}^{(j)}$ is $m_i-\mathds{1}_I(i)-r\geq0$. Thus, every alphabet occurring in $\mathbf{w}^{(j)}$ contributes non-negatively to the ordinate, and so the ordinate after reading $\mathbf{w}^{(j)}$ is non-negative. Since this holds for every $j\in[C+n]$, the path constructed by $\psi_{\mathbf{c}}(\mathbf{w})$ never goes below the $x$-axis. Hence, $\psi_{\mathbf{c}}$ is well defined.

We now prove that the map $\psi_{\mathbf{c}}$ is injective. Let $\mathbf{w},\mathbf{w}'\in\mathcal{W}_{C}(\mathbf{c})$, and let $\ell\in[C+n]$ be the first position from the left at which $\mathbf{w}$ and $\mathbf{w}'$ differ. Suppose that the $\ell$-th letters of $\mathbf{w}$ and $\mathbf{w}'$ are $a_{i_1}^{(l_1)}$ and $a_{i_2}^{(l_2)}$, respectively.

We claim that $l_1=-k_{i_1}$ or $l_2=-k_{i_2}$. Suppose, to the contrary, that neither equality holds. Then the alphabets with indices $i_1$ and $i_2$ must both have appeared before the $\ell$-th position in $\mathbf{w}$ and $\mathbf{w}'$, respectively. Since $\mathbf{w}$ and $\mathbf{w}'$ both satisfy Definition~\ref{defn:gen_cat_m_ord_seqn}, it follows that $a_{i_1}^{(l_1)}=a_{i_2}^{(l_2)}$, contradicting the choice of $\ell$. Hence, the claim follows.

Without loss of generality, suppose that $l_1=-k_{i_1}$. Then the $\ell$-th step of the $c$-labelled Dyck path corresponding to $\psi_{\mathbf{c}}(\mathbf{w})$ is the up-step labelled $U_{i_1}$. Since the $\ell$-th letters of $\mathbf{w}$ and $\mathbf{w}'$ are distinct, the $\ell$-th step of the path corresponding to $\psi_{\mathbf{c}}(\mathbf{w}')$ is different from $U_{i_1}$. Therefore, $\psi_{\mathbf{c}}(\mathbf{w})\neq\psi_{\mathbf{c}}(\mathbf{w}')$. Thus, $\psi_{\mathbf{c}}$ is injective. Hence, $\psi_{\mathbf{c}}$ is bijective.
\end{proof}

\noindent We now define the inverse of the map ${\psi}_{\mathbf{c}}$. We omit the proof that it is indeed the inverse of $\psi_{\mathbf{c}}$.

\medskip

\noindent \textbf{The map $(\psi_{\mathbf{c}})^{-1}:\mathcal{D}({\mathbf{c}}) \rightarrow \mathcal{W}_C({\mathbf{c}})$:}
For each step $i\in[C+n]$, we maintain a tuple $T_i$ of indices, referred to as the
\textit{cyclic index tuple}. We regard each $T_i$ as a cyclic tuple; that is, the index following the last element of $T_i$ is the first element of $T_i$.

Suppose that the first step of the $\mathbf{c}$-labelled Dyck path is the up-step labelled by $U_j$. We initialize the construction by setting
$$
w_1=a_j^{(-m_j)},\qquad T_1=(j).
$$
Now, suppose that the first $(i-1)$ steps have been processed and that the last letter added is $w_{i-1}=a_{j_1}^{(k_1)}$. Let $T_{i-1}$ denote the corresponding cyclic index tuple. We define the next letter $w_i$ and the updated tuple $T_i$ according to the $i$-th step of the $\mathbf{c}$-labelled Dyck path as follows.

\begin{enumerate}
    \item If the $i$-th step is an up-step labelled by $U_{j_2}$,
    then we set $w_i=a_{j_2}^{(-m_{j_2})}$. The tuple $T_i$ is obtained by inserting $j_2$ immediately after $j_1$ in
    $T_{i-1}$.

    \item If the $i$-th step is a down-step $(1,-1)$, let $j_2$ be the index appearing immediately after $j_1$ in the cyclic tuple $T_{i-1}$. Let $k_2$ be determined by the condition that $a_{j_2}^{(k_2-1)}$ is the most recently used element of the form $a_{j_2}^{(t)}$ among $w_1,w_2,\dots,w_{i-1}$. We then set $w_i=a_{j_2}^{(k_2)}$. If $k_1<m_{j_1}-\mathds{1}_I(j_1)$, set $T_i=T_{i-1}$, i.e. $T_i$ is unchanged. Otherwise, if $k_1=m_{j_1}-\mathds{1}_I(j_1)$, remove $j_1$ from $T_{i-1}$ to obtain $T_i$.
\end{enumerate}

Let us illustrate the construction of this map with an example.

\begin{example}
We now explain the inverse map using an example. Consider the $\mathbf c$-labelled Dyck path given in Example \ref{example:catalan_word_to_dyck_path},
$$
U_1\,U_3\,D\,U_2\,D\,D\,D\,D\,D\,D.$$
We now apply the above algorithm to construct the corresponding Catalan
$\mathbf c$-ordered word. 

\begin{itemize}
\item The first step is the up-step labelled by $U_1$,
and hence we set $w_1=a_1^{(-2)}$ and  $T_1=(1)$.
\item The second step is again an up-step labelled by $U_3$. We insert the new index $3$
immediately after $1$ in the cyclic tuple and set
$w_2=a_3^{(-1)}$, and $T_2=(1,3)$.
\item The third step is a down-step. The index immediately following $3$ in the cyclic tuple $T_2$ is again $1$. Since the most recently used letter of type $1$ is $a_1^{(-2)}$, we obtain $w_3=a_1^{(-1)}$, $T_3$ is unchanged, i.e., $T_3=(1,3)$.
\item The fourth step is an up-step labelled by $U_2$. We insert the new index $2$
immediately after $3$ in the cyclic tuple and set
$w_4=a_2^{(-1)}$, and $T_4=(1,3,2)$.
\item The fifth step is a down-step. The index following $2$ in $T_4$ is $3$. The most recently used letter of type $3$ is $a_3^{(-1)}$, and therefore $w_5=a_3^{(0)}$, we get $T_5=(1,3,2)$. 

\item The sixth step is again a down-step. The index following $3$ is $1$ in $T_5$.
Since $a_1^{(-1)}$ is the most recently used letter of type $1$, we obtain
$w_6=a_1^{(0)}$, and removing $3$ we get  $T_6=(1,2)$.

\item The seventh step is again a down-step. The index following $1$ in $T_6$ is $2$.
Since $a_2^{(-1)}$ is the most recently used letter of type $2$, we obtain
$w_7=a_2^{(0)}$, and $T_7=(1,2)$.

\item Similarly the remaining down-steps give
$$
\begin{aligned}
&w_8=a_1^{(1)}, &T_8=(1,2),\\
&w_{9}=a_2^{(1)}, &T_{9}=(1,2),\\
&w_{10}=a_1^{(2)}, &T_{10}=(1) .
\end{aligned}
$$

\end{itemize}

Hence the corresponding Catalan $\mathbf c$-ordered word is
$$a_1^{(-2)}a_3^{(-1)}a_1^{(-1)}a_2^{(-1)}a_3^{(0)}a_1^{(0)}a_2^{(0)}a_1^{(1)}a_2^{(1)}a_1^{(2)},$$ as in Example \ref{example:catalan_word_to_dyck_path}.
\end{example}

\noindent Here we give a rough sketch of the proof that $\left(\psi_{\mathbf{c}}\right)^{-1}$ is indeed the inverse of $\psi_{\mathbf{c}}$.

\begin{proof}[Sketch of the proof that $(\psi_{\mathbf{c}})^{-1}$ is inverse of $\psi_{\mathbf{c}}$]
Let $P_w\in\mathcal{D}(\mathbf{c})$ be an arbitrary $\mathbf{c}$-labelled
Dyck path. We construct a word
$\mathbf{w}=w_1w_2\cdots w_{C+n}$ using the above algorithm. The cyclic
index tuple keeps track of the indices that are currently active. When an
up-step $U_j$ is encountered, we add the letter
$a_j^{(-m_j)}$ and insert $j$ into the cyclic tuple. When a down-step is
encountered, the next active index $j_2$ is uniquely determined by the
cyclic order, and the corresponding superscript $k_2$ is uniquely
determined by the most recently used letter associated with $j_2$. The
update rule removes an index precisely when its last allowable letter has
been used. Hence, at every step, the next letter and the corresponding
cyclic tuple are uniquely determined, so the construction produces a
well-defined word $\mathbf{w}\in\mathcal{W}_C(\mathbf{c})$.

By construction, an up-step $U_j$ of $P_w$ produces the letter
$a_j^{(-m_j)}$, while every down-step produces a letter
$a_j^{(k)}$ with $k\in[-m_j+1,m_j-\mathds{1}_{I}(j)]$. Therefore, applying
$\psi_{\mathbf{c}}$ to the word constructed above reproduces exactly the
same sequence of steps as $P_w$. Thus,
$$
\psi_{\mathbf{c}}(\mathbf{w})=P_w.
$$
and, since ${\psi}_{\mathbf c}$ is bijective, the constructed map
is precisely $({\psi}_{\mathbf c})^{-1}$.
\end{proof}
% The next theorem gives a precise formulation of Theorem \ref{main_cat0}.

We now establish the first main result of this paper: an explicit bijection between the set of regions of the asymmetric $\mathbf{c}$-Catalan arrangement, $\mathcal{R}_{C}(\mathbf{c})$, and the set of $\mathbf{c}$-labelled Dyck paths, $\mathcal{D}(\mathbf{c})$.

\begin{proof}[\textbf{Proof of Theorem \ref{main-bijection-cat1}}]
By Theorems \ref{thm:cat_region_and_words} and \ref{thm:word_and_dyck}, both $\varphi_{\mathbf{c}}$ and $\psi_{\mathbf{c}}$ are bijections. Therefore, their composition $\psi_{\mathbf{c}}\circ\varphi_{\mathbf{c}}$ is also a bijection  between $\mathcal{R}_{C}(\mathbf{c})$ and $\mathcal{D}(\mathbf{c})$. We provide a commutative diagram for more clarity.

\begin{figure}[H]
\begin{center}
    \begin{tikzpicture}
    [>=Stealth,node distance=2.5cm and 2.5cm]

\node (R) {$\mathcal{R}_{C}(\mathbf{c})$};
\node[right=of R] (A) {$\mathcal{W}_{C}(\mathbf{c})$};
\node[below=of A] (D) {$\mathcal{D}(\mathbf{c})$};

% Horizontal arrow
\draw[->] (R) -- node[above] {$\varphi_{\mathbf{c}}$} (A);

% Vertical arrow
\draw[->] (A) -- node[right] {$\psi_{\mathbf{c}}$} (D);

% Diagonal arrow
%\draw[->] (R) -- (D);
\draw[->] (R) -- node[below,left] {$\psi_{\mathbf{c}}\circ\varphi_{\mathbf{c}}$} (D);

% Composition label
%\node[left=1.2cm of D] {$\psi_m\circ\phi_m$};

\end{tikzpicture}
\end{center}
\caption{Commutative diagram of bijection}
\end{figure}
\end{proof}
%\noindent\rule{\textwidth}{0.4pt}

\section{Asymmetric $\mathbf c$-Shi Arrangement}\label{sec:ASA}

We now turn our attention to the asymmetric $\mathbf c$-Shi arrangement. This naturally extends the previously studied $k$-Shi arrangements.
Our main goal is to give a bijective description of the regions of this arrangement. To this end, we establish a bijection between its regions and the $\mathbf{c}$-building functions introduced in Section \ref{sec: c-building}.

Given a tuple $\mathbf{c}=(c_1,c_2,\dots,c_n)$, we will show that the regions of the asymmetric $\mathbf{c}$-Shi
arrangement are in bijection with the $\mathbf{c}$-building functions.
Recall that the asymmetric $\mathbf{c}$-Shi arrangement $\mathcal{A}_{\mathrm{Shi}}^{\mathbf{c}}$ is the hyperplane arrangement defined in Definition~\ref{shi-kind}.

For each $i\in[n]$, define $\mathbf{m}=(m_1,m_2,\ldots , m_n)$ such that
$$
m_i=
\begin{cases}
\dfrac{c_i+1}{2}, & \text{if } c_i \text{ is odd},\\[6pt]
\dfrac{c_i}{2}, & \text{if } c_i \text{ is even}.
\end{cases}
$$ and let $I=\{i\in[n]\mid c_i\text{ is odd}\}$. The asymmetric $\mathbf{c}$-Shi arrangement is the hyperplane arrangement
$$
\mathcal{A}_{\mathrm{Shi}}^{\mathbf{c}}=\{ x_i-x_j \in [-m_{i}-m_{j}+1+\mathds{1}_{I}(i),m_i+m_j-\mathds{1}_{I}(j)]\}.
$$
We denote the set of regions of asymmetric $\mathbf{c}$-Shi arrangement by $\mathcal{R}_{S}(\mathbf{c})$.

% Throughout this section, for each $i\in[n]$, define
% $$
% m_i=
% \begin{cases}
% \dfrac{c_i+1}{2}, & \text{if } c_i \text{ is odd},\\[6pt]
% \dfrac{c_i}{2}, & \text{if } c_i \text{ is even},
% \end{cases}
% $$
% and let 
% $$
% I=\{i\in[n]\mid c_i\text{ is even}\}.
% $$

% \begin{note}\label{c to m shi}
% We will construct a bijection between the regions of the  asymmetric $\mathbf{c}$-Shi arrangement and the $\mathbf{c}$-building functions (see \ref{c-dyckpath}). Therefore by Theorem \ref{count_building-func}, number of $\mathbf{c}$-building functions is 
% $$(1+C)^{n-1}.$$
% where $\displaystyle C=\sum_{i=1}^nc_i$.
% \end{note}
\begin{example}\label{ex:shi_regions}
    Let us consider an example where $n=3$, $\mathbf{c}=(4,2,1)$, hence $\mathbf{m}=(2,1,1)$ and $I=\{3\}.$  Therefore 
\[
\mathcal{A}_{\mathrm{Shi}}^{\mathbf{c}}=\left\{
x_1-x_2 \in [-2,3],\quad 
x_1-x_3 \in[-2,2],\quad
x_2-x_3 \in[-1,1]\right\}.
\]
 We have total $64$ regions in Figure \ref{fig:shi_regions}. 
\begin{figure}[htbp]
\centering
\begin{tikzpicture}[xscale=1,yscale=1,scale=0.57]

\colorlet{redline}{red!80!black}
\colorlet{greenline}{green!50!black}

%---------------------------------------------------
% Hyperplanes x1-x2=c
%---------------------------------------------------
\foreach \c in {-2,-1,1,2,3}
{
    \draw[redline,thick] (\c,-8) -- (\c,8);
}

% Central hyperplane x1=x2
\draw[redline,very thick] (0,-8) -- (0,8);
\node[text=redline, above] at (0,8.1) {$x_1=x_2$};
\draw[redline,dotted, thick] (-3,8) -- (-3,-8);
%---------------------------------------------------
% Hyperplanes x2-x3=d
%---------------------------------------------------
\foreach \d in {-1,1}
{
    \draw[blue,thick] (-8,{-4-\d}) -- (8,{4-\d});
}

% Central hyperplane x2=x3
\draw[blue,very thick] (-8,-4) -- (8,4);
\node[text=blue,rotate=26.565] at (9.2,4.6) {$x_2=x_3$};
\draw[blue,dotted, thick] (8,2) -- (-8,-6);
%---------------------------------------------------
% Hyperplanes x1-x3=e
%---------------------------------------------------
\foreach \e in {-2,-1,1,2}
{
    \draw[greenline,thick] (-8,{4-\e}) -- (8,{-4-\e});
}

% Central hyperplane x1=x3
\draw[greenline,very thick] (-8,4) -- (8,-4);
\node[text=greenline,rotate=-26.565] at (-9.2,4.6) {$x_1=x_3$};
\draw[greenline,dotted, thick] (-8,1) -- (8,-7);
%---------------------------------------------------
% Origin
%---------------------------------------------------
\fill (0,0) circle (3pt);

%---------------------------------------------------
% Chamber labels
%---------------------------------------------------
\node at (-5.2,7) {$x_2>x_1>x_3$};

\node at (5.2,7) {$x_1>x_2>x_3$};

\node at (8.2,0.55) {$x_1>x_3>x_2$};

\node at (-5.2,-6.8) {$x_3>x_2>x_1$};

\node at (-8.2,-0.55) {$x_2>x_3>x_1$};

\node at (5.2,-6.9) {$x_3>x_1>x_2$};

\node at (-0.3,0.6) {A};
%\node at (2.5, 1.8) {B};
 \node at (-5.0,-6.0) {B};
\node at (-3.8,-4.8) {$\mathbf{w_1}$};
\node at (-4.0,-3.5) {$\mathbf{w_2}$};
\node at (-2.5,-4.4) {$\mathbf{w_3}$};
\node at (-2.5,-2.7) {$\mathbf{w_4}$};       
\end{tikzpicture}
\caption{Asymmetric $\mathbf{c}$-Shi arrangement for $\mathbf{c}=(4,2,1)$}
\label{fig:shi_regions}
\end{figure}

\end{example}

\begin{note}
    If we take $c_i=k$ for all $i\in [n]$ then $\mathcal{A}_{\mathrm{Shi}}^{\mathbf{c}}$  gives the $k$-Shi arrangement (see \ref{k-shi}).
\end{note}

We now establish a bijection between the regions of the asymmetric $\mathbf{c}$-Shi arrangement and the $\mathbf{c}$-building functions. The construction is carried out in two steps. First, we define a bijection $\tilde\varphi_{\mathbf{c}}$ between the regions of the asymmetric $\mathbf{c}$-Shi arrangement and the Shi $\mathbf{c}$-ordered words. Next, we construct a bijection $\tilde\phi_{\mathbf{c}}$ from the Shi $\mathbf{c}$-ordered words to the $\mathbf{c}$-building functions. Consequently, the composition $(\tilde\phi_{\mathbf{c}}\circ\tilde\varphi_{\mathbf{c}})$ gives the desired bijection between the regions of the asymmetric $\mathbf{c}$-Shi arrangement and the $\mathbf{c}$-building functions.
\begin{note}\label{theo_shi_sol}
    Let $\mathbf{c}=2\mathbf{m}$, that is, $c_i=2m_i$ for all $i\in[n]$. Then the arrangement $\mathcal{A}_{\mathrm{Shi}}^{2\mathbf{m}}$ coincides with the arrangement considered in Equation \ref{theo_shi}, which was originally proposed by Theo Douvropoulos. Consequently, the number of regions of $\mathcal{A}_{\mathrm{Shi}}^{2\mathbf{m}}$ is equal to the number of $2\mathbf{m}$-building function.
\end{note}

\subsection{Bijection between regions of  asymmetric $\mathbf{c}$-Shi arrangement and Shi $\mathbf{c}$-ordered words}

\begin{definition}
Let $\mathbf{w}$ and $\mathbf{w}'$ be Catalan $\mathbf{c}$-ordered words. We say that $\mathbf{w}$ and $\mathbf{w}'$ are related by a \textit{Shi move} if $\mathbf{w}'$ is obtained from $\mathbf{w}$ by swapping two consecutive alphabets $a_i^{(m_i-\mathds{1}_I(i))}$ and $a_j^{(-m_j)}$ with $i<j$.

We say that $\mathbf{w}$ and $\mathbf{w}'$ are \textit{$\mathbf{c}$-Shi equivalent} if $\mathbf{w}'$ can be obtained from $\mathbf{w}$ by performing a finite sequence of Shi moves.
\end{definition}

\begin{example}\label{ex:word_shi_1}

Consider the region marked as $A$ in Figure~\ref{fig:shi_regions}. The corresponding Catalan $\mathbf{c}$-ordered word is
$$
a_1^{(-2)}a_3^{(-1)}a_1^{(-1)}a_2^{(-1)}a_3^{(0)}a_1^{(0)}a_2^{(0)}a_1^{(1)}a_2^{(1)}a_1^{(2)}.
$$ 
There is no possible Shi move for this region. 

Now consider the region marked as $B$ in Figure \ref{fig:shi_regions}. There are four regions in the asymmetric $\mathbf{c}$-Catalan arrangement that correspond to region $B$ in the asymmetric $\mathbf{c}$-Shi arrangement. The corresponding Catalan $\mathbf{c}$-ordered word are as follows:
\begin{itemize}
    \item $\mathbf{w_1}=a_1^{(-2)}  a_1^{(-1)} a_1^{(0)}  a_1^{(1)}  a_1^{(2)} a_2^{(-1)}  a_2^{(0)}  a_2^{(1)}  a_3^{(-1)} a_3^{(0)}$,

    \item
    $\mathbf{w_2}=a_1^{(-2)}  a_1^{(-1)} a_1^{(0)}  a_1^{(1)}  a_1^{(2)}
    a_2^{(-1)} a_2^{(0)}  a_3^{(-1)}   a_2^{(1)} a_3^{(0)}$,

    \item
    $\mathbf{w_3}=a_1^{(-2)}  a_1^{(-1)} a_1^{(0)}  a_1^{(1)} a_2^{(-1)}  a_1^{(2)}   a_2^{(0)}  a_2^{(1)}  a_3^{(-1)} a_3^{(0)}$,

    \item
    $\mathbf{w_4}=a_1^{(-2)}  a_1^{(-1)} a_1^{(0)}  a_1^{(1)} a_2^{(-1)}  a_1^{(2)}   a_2^{(0)}  a_3^{(-1)} a_2^{(1)}   a_3^{(0)}.$ 
\end{itemize}
$\mathbf{w_2}$ is obtained from $\mathbf{w_1}$ by a  Shi move that swaps $a_2^{(1)}$ and $a_3^{(-1)}$. Hence, $\mathbf{w_1}$ and $\mathbf{w_2}$ are $\mathbf{c}$-Shi equivalent. Next, $\mathbf{w_3}$ is obtained from $\mathbf{w_1}$ by swapping $a_1^{(2)}$ and $a_2^{(-1)}$, and $\mathbf{w_4}$ is obtained from $\mathbf{w_3}$ by swapping $a_2^{(1)}$ and $a_3^{(-1)}$. Therefore, $\mathbf{w_1}$, $\mathbf{w_2}$, $\mathbf{w_3}$, and $\mathbf{w_4}$ all belong to the same $\mathbf{c}$-Shi equivalence class. 
Hence, the words $\mathbf{w_1}$, $\mathbf{w_2}$, $\mathbf{w_3}$, and $\mathbf{w_4}$ represent the same region in the asymmetric $\mathbf c$-Shi arrangement, whereas each represents a distinct region in the asymmetric $\mathbf c$-Catalan arrangement.
\end{example}
The  Shi moves define an equivalence relation on the set $\mathcal{W}_{C}(\mathbf{c})$. We now prove that there is a bijective correspondence between the resulting equivalence classes and the regions of the asymmetric $\mathbf c$-Shi arrangement.

\begin{theorem}\label{shush2} 
Let $\mathbf{w}, \mathbf{w}' \in \mathcal{W}_{C}(\mathbf{c})$, and let $\Delta=\varphi_{\mathbf{c}}(\mathbf{w})$ and $\Delta'=\varphi_{\mathbf{c}}(\mathbf{w}')$ be the corresponding regions of the asymmetric $\mathbf{c}$-Catalan arrangement. Then $\mathbf{w}$ and $\mathbf{w}'$ are $\mathbf{c}$-Shi equivalent if and only if $\Delta$ and $\Delta'$ lie in the same region of the  asymmetric $\mathbf{c}$-Shi arrangement.
\end{theorem}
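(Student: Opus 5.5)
The plan is to compare the two arrangements directly. $\mathcal{A}_{\mathrm{Shi}}^{\mathbf{c}}$ is obtained from $\mathcal{A}_{\mathrm{Cat}}^{\mathbf{c}}$ by deleting, for each pair $i<j$, exactly one hyperplane, namely
$$x_i-x_j=-m_i-m_j+\mathds{1}_I(i),\quad\text{equivalently}\quad x_i+\bigl(m_i-\mathds{1}_I(i)\bigr)=x_j+(-m_j).$$
Hence every Shi region is a union of Catalan regions together with pieces of the deleted hyperplanes. The first step is a dictionary between hyperplanes and letter pairs, read off from the construction of $\varphi_{\mathbf{c}}$ in Theorem~\ref{thm:cat_region_and_words}. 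The relative order of the letters $a_i^{(t)}$ and $a_j^{(s)}$ in $\varphi_{\mathbf{c}}(\Delta)$ is the side of $\Delta$ with respect to the hyperplane $x_i-x_j=s-t$. The pair with $t=m_i-\mathds{1}_I(i)$ maximal and $s=-m_j$ minimal is the \emph{only} letter pair realizing the extreme value $s-t=-m_i-m_j+\mathds{1}_I(i)$. So the deleted hyperplane for $i<j$ controls exactly one comparison in the word: that between $a_i^{(m_i-\mathds{1}_I(i))}$ and $a_j^{(-m_j)}$. Every other hyperplane $x_i-x_j=k$ controls one or more other letter pairs.

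For the forward direction it suffices to treat a single Shi move. Suppose $\mathbf{w}'$ arises from $\mathbf{w}$ by swapping adjacent letters $a_i^{(m_i-\mathds{1}_I(i))}$ and $a_j^{(-m_j)}$ with $i<j$. Using the explicit point construction from the surjectivity part of Theorem~\ref{thm:cat_region_and_words}, choose $x\in\Delta$ so that the values $x_k+t$ are distinct. Because the two letters are adjacent in the word, we can then continuously slide the coordinate $x_j$ (or $x_i$) until $x_j-m_j$ passes $x_i+m_i-\mathds{1}_I(i)$, with no other value $x_k+t$ lying between them. Along this path only that one comparison changes. Thus the segment crosses only the deleted hyperplane and ends in the Catalan region with word $\mathbf{w}'$. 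Since no Shi hyperplane is crossed, $\Delta$ and $\Delta'$ lie in the same Shi region. Transitivity then handles finite sequences of moves.

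For the converse, let $\Delta,\Delta'$ lie in a common Shi region $R$. Since $R$ is open and connected, pick points $x\in\Delta$ and $x'\in\Delta'$ and join them by a piecewise linear path inside $R$ that avoids all codimension-$\ge 2$ intersections of Catalan hyperplanes; this is possible by a general position argument, since those intersections have codimension at least two. Such a path passes through a finite chain of Catalan regions, and consecutive regions are separated by a single Catalan hyperplane. That hyperplane is not a Shi hyperplane, so it is a deleted one, $x_i-x_j=-m_i-m_j+\mathds{1}_I(i)$ with $i<j$. By the dictionary, crossing it flips exactly one letter comparison, between $a_i^{(m_i-\mathds{1}_I(i))}$ and $a_j^{(-m_j)}$, and preserves the relative order of every other pair of letters. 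Two linear orders that differ in the relative order of exactly one pair must have that pair adjacent. So consecutive words in the chain differ by a Shi move, in one direction or the other, and $\mathbf{w},\mathbf{w}'$ are $\mathbf{c}$-Shi equivalent.

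I expect the main obstacle to be making the dictionary airtight: each non-deleted hyperplane must genuinely control some letter pair, which is needed for the injectivity-type claim that crossing it changes the word. The proof of Theorem~\ref{thm:cat_region_and_words} (Cases 1 and 2) already sketches this. I would also handle the genericity of the path carefully, and note explicitly that the Shi move relation is taken as symmetric, so that swaps in either direction are allowed.
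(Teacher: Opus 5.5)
Your approach is the same as the paper's: $\mathcal{A}_{\mathrm{Shi}}^{\mathbf{c}}$ is $\mathcal{A}_{\mathrm{Cat}}^{\mathbf{c}}$ with the walls $x_i-x_j=-m_i-m_j+\mathds{1}_I(i)$, $i<j$, removed, and a Shi move is a crossing of one such wall. The paper only asserts this correspondence. Your dictionary makes it precise: the order of $a_i^{(t)}$ and $a_j^{(s)}$ records the side of $x_i-x_j=s-t$, and the removed wall is controlled by the single pair $a_i^{(m_i-\mathds{1}_I(i))},a_j^{(-m_j)}$. Your converse is correct: a generic path inside the Shi region crosses only removed walls, and two linear orders that differ in exactly one pair have that pair adjacent.

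The step that fails as written is the sliding argument in the forward direction. Moving $x_j$ moves every value $x_j+s$, not only $x_j-m_j$. Adjacency of $a_i^{(m_i-\mathds{1}_I(i))}a_j^{(-m_j)}$ controls only the gap between those two values. Nothing prevents some $a_k^{(-m_k)}$ from sitting just below some $a_j^{(s)}$ with $s>-m_j$. Likewise, when raising $x_i$, some $a_k^{(m_k-\mathds{1}_I(k))}$ may sit just above some $a_i^{(t)}$ with $t<m_i-\mathds{1}_I(i)$.

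For instance, take $\mathbf{c}=(2,2,2,2)$ and $x=(0,\,2.5,\,3.4,\,-0.9)$. The word contains $a_1^{(1)}a_2^{(-1)}$ adjacently, so a Shi move applies. Lowering $x_2$ crosses the Shi hyperplane $x_2-x_3=-1$ at $x_2=2.4$, before reaching $x_1-x_2=-2$ at $x_2=2$. Raising $x_1$ crosses the Shi hyperplane $x_1-x_4=1$ at $x_1=0.1$, before reaching $x_1=0.5$. Choosing $x$ with distinct values $x_k+t$ does not help, since that holds at every point of $\Delta$.

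The repair needs only your dictionary. The region $\Delta'$ exists because $\mathbf{w}'\in\mathcal{W}_C(\mathbf{c})$. The only comparison that differs between $\mathbf{w}$ and $\mathbf{w}'$ is the swapped pair, which controls only the removed wall. Every Shi hyperplane is controlled by at least one letter pair, none of which is the swapped pair. Hence $\Delta$ and $\Delta'$ lie on the same side of every Shi hyperplane. A Shi region is the intersection of the corresponding open half-spaces; equivalently, by convexity, the segment from a point of $\Delta$ to a point of $\Delta'$ meets no Shi hyperplane. So $\Delta$ and $\Delta'$ lie in the same Shi region. With this replacement your proof is complete.
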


\begin{proof}
Observe that $\mathbf{w}$ and $\mathbf{w}'$ differ by a single Shi move if and only if the corresponding regions $\Delta$ and $\Delta'$ are adjacent and separated by one of the following hyperplanes. If $i<j$, the move swaps $a_i^{(m_i-\mathds{1}_I(i))}$ and $a_j^{(-m_j)}$. In this case, $\Delta$ and $\Delta'$ are separated by the hyperplane
$$
x_i-x_j=-m_i-m_j+\mathds{1}_I(i).
$$

It follows that $\mathbf{w}$ and $\mathbf{w}'$ are $\mathbf{c}$-Shi equivalent if and only if one can move from $\Delta$ to $\Delta'$ by crossing only hyperplanes of the form
$$
x_i-x_j=-m_i-m_j+\mathds{1}_I(i).
$$
where $i< j$.
Equivalently, $\mathbf{w}$ and $\mathbf{w}'$ are $\mathbf{c}$-Shi equivalent if and only if the regions $\Delta$ and $\Delta'$ lie in the same region of the  asymmetric $\mathbf{c}$-Shi arrangement.
\end{proof}

\begin{definition}
A \emph{Shi $\mathbf{c}$-ordered word} is an equivalence class of $\mathcal{W}_{C}(\mathbf{c})$ under the equivalence relation induced by the  Shi moves.

\noindent The set of all Shi $\mathbf{c}$-ordered words is denoted by $\mathcal{W}_{S}(\mathbf{c})$.
\end{definition}

The following theorem summarizes the results established in this section.

\begin{theorem}\label{thm:shi_region_and_words}
The induced map $\tilde{\varphi}_{\mathbf c}$ is a bijection between $\mathcal R_{S}(\mathbf c)$ and $\mathcal W_{S}(\mathbf c)$.
\end{theorem}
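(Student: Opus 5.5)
The plan is to deduce the statement from Theorem~\ref{thm:cat_region_and_words} and Theorem~\ref{shush2}, using the fact that the asymmetric $\mathbf{c}$-Shi arrangement is a subarrangement of the asymmetric $\mathbf{c}$-Catalan arrangement.

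First I check containment. For $i<j$, the Shi range $[-m_i-m_j+1+\mathds{1}_I(i),\,m_i+m_j-\mathds{1}_I(j)]$ is the Catalan range with its lowest value $-m_i-m_j+\mathds{1}_I(i)$ removed. Hence $\mathcal{A}_{\mathrm{Shi}}^{\mathbf{c}}\subseteq\mathcal{A}_{\mathrm{Cat}}^{\mathbf{c}}$. It follows that every region $\Delta\in\mathcal{R}_C(\mathbf{c})$ lies in a unique region $\pi(\Delta)\in\mathcal{R}_S(\mathbf{c})$. Moreover, each Shi region is the closure-union of the Catalan regions it contains, so the map $\pi:\mathcal{R}_C(\mathbf{c})\to\mathcal{R}_S(\mathbf{c})$ is surjective.

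Next I define $\tilde{\varphi}_{\mathbf{c}}$. For $R\in\mathcal{R}_S(\mathbf{c})$, choose any Catalan region $\Delta\subseteq R$ and set $\tilde{\varphi}_{\mathbf{c}}(R)$ to be the Shi-equivalence class of $\varphi_{\mathbf{c}}(\Delta)$.

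Well-definedness is the direction ``same Shi region $\Rightarrow$ Shi-equivalent'' of Theorem~\ref{shush2}. If $\Delta,\Delta'\subseteq R$, then $\varphi_{\mathbf{c}}(\Delta)$ and $\varphi_{\mathbf{c}}(\Delta')$ are $\mathbf{c}$-Shi equivalent. (Here I read $\varphi_{\mathbf{c}}$ and its inverse interchangeably, as Theorem~\ref{shush2} does.)

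Injectivity is the converse direction. If two Shi regions $R,R'$ give the same class, then representatives $\Delta\subseteq R$ and $\Delta'\subseteq R'$ have Shi-equivalent words. By Theorem~\ref{shush2}, $\Delta$ and $\Delta'$ then lie in the same Shi region, so $R=R'$.

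Surjectivity follows from surjectivity of $\varphi_{\mathbf{c}}$ (Theorem~\ref{thm:cat_region_and_words}). Any class in $\mathcal{W}_S(\mathbf{c})$ contains some word $\mathbf{w}=\varphi_{\mathbf{c}}(\Delta)$, and then $\tilde{\varphi}_{\mathbf{c}}(\pi(\Delta))$ is that class.

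The main obstacle is that this argument leans entirely on Theorem~\ref{shush2}. Its proof asserts that a Shi move corresponds exactly to crossing a wall $x_i-x_j=-m_i-m_j+\mathds{1}_I(i)$ between adjacent Catalan regions. The direction ``same Shi region $\Rightarrow$ Shi-equivalent'' also needs a path-connectivity argument, which I would supply explicitly. A Shi region $R$ is convex, so for $\Delta,\Delta'\subseteq R$ I pick generic points $p\in\Delta$ and $p'\in\Delta'$. The segment from $p$ to $p'$ stays inside $R$, so it crosses only hyperplanes of $\mathcal{A}_{\mathrm{Cat}}^{\mathbf{c}}\setminus\mathcal{A}_{\mathrm{Shi}}^{\mathbf{c}}$, and by genericity it crosses them one at a time. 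It therefore passes through a chain of adjacent Catalan regions separated by the removed walls.

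For each such crossing, I verify in the word that the letters $a_i^{(m_i-\mathds{1}_I(i))}$ and $a_j^{(-m_j)}$ are adjacent and get swapped. The corresponding points $x_i+m_i-\mathds{1}_I(i)$ and $x_j-m_j$ coincide on the wall, and no other letter value lies strictly between them near a generic point of the wall. This gives exactly one Shi move per crossing, which completes the argument.
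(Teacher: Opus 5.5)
Your proposal is correct and follows the paper's route. The paper likewise deduces well-definedness and bijectivity of $\tilde{\varphi}_{\mathbf c}$ directly from Theorem~\ref{shush2}, with surjectivity implicitly inherited from $\varphi_{\mathbf c}$. Your generic-segment argument, in which a path inside a convex Shi region crosses only the removed walls $x_i-x_j=-m_i-m_j+\mathds{1}_I(i)$ one at a time and each crossing is a single Shi move, fills in the step that the paper's proof of Theorem~\ref{shush2} asserts without detail.
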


\begin{proof}
Since $\mathcal{W}_{S}(\mathbf{c})$ is the quotient of $\mathcal{W}_{C}(\mathbf{c})$ by the  Shi move equivalence relation, Theorem~\ref{shush2} implies that the map $\varphi_{\mathbf c}$ induces a well-defined map
$$
\tilde{\varphi}_{\mathbf c}:\mathcal R_{S}(\mathbf c)\to\mathcal W_{S}(\mathbf c).
$$
The same theorem further implies that $\tilde{\varphi}_{\mathbf c}$ is a bijection.
\end{proof}

\begin{note}\label{same size R, S}
    As we established a bijection between $\mathcal{R}_{S}(\mathbf{c})$ and $\mathcal{W}_{S}(\mathbf{c})$, hence cardinality of both the sets are same. Therefore by Theorem \ref{char-catalan} we have, the size of $\mathcal{W}_{S}(\mathbf{c})$ is  $(1 + C)^{n-1}$
    where $C=\displaystyle \sum_{i=1}^n c_i$. 
\end{note}

\subsection{Bijection between  Shi $\mathbf{c}$-ordered word and $\mathbf{c}$-building function}

In this section, we define a map $$\tilde{\phi}_{\mathbf{c}}:\mathcal{W}_{S}(\mathbf{c})\to\mathcal{B}(\mathbf{c}),$$ 
where $\mathcal{W}_{S}(\mathbf{c})$ denotes the set of Shi $\mathbf{c}$-ordered words and $\mathcal{B}(\mathbf{c})$ denotes the set of $\mathbf{c}$-building functions. We prove that $\tilde{\phi}_{\mathbf{c}}$ is a bijection. 
We begin by defining a map 
$$
\phi_{\mathbf{c}}:\mathcal{W}_{C}(\mathbf{c})\to\mathcal{B}(\mathbf{c}).
$$ 
We then show that $\phi_{\mathbf{c}}$ is constant on the $\mathbf{c}$-Shi equivalence classes and therefore induces a well-defined map $\tilde{\phi}_{\mathbf{c}}$, which we subsequently prove to be a bijection.

\medskip 

\noindent \textbf{The map $\phi_{\mathbf{c}}:\mathcal{W}_{C}(\mathbf{c})\to\mathcal{B}(\mathbf{c})$:} The map $\phi_{\mathbf{c}}$ is constructed in the following steps.

\begin{enumerate}
\item For each $j\in[n]$, define the sets $A_j$ and $A_j'$ as follow
$$
A_j=\left\{a_j^{(-m_j+1)},a_j^{(-m_j+2)},\ldots,a_j^{(m_j-\mathds{1}_I(j))}\right\},  A_j'=\left\{a_j^{(-m_j)},a_j^{(-m_j+1)},\ldots,a_j^{(m_j-\mathds{1}_I(j)-1)}\right\}.
$$

\item For each $i\in[n]$, define the set of \emph{eligible alphabets for $i$} by
$$
E_i=\left(\bigcup_{j<i}A_j'\right)\cup\left(\bigcup_{k>i}A_k\right).
$$
Thus, $E_i$ contains the alphabets from $A_j'$ for $j<i$ and the alphabets from $A_k$ for $k>i$.

\item Given $\mathbf{w}\in\mathcal{W}_{C}(\mathbf{c})$, consider the ordering of the alphabets in $\mathbf{w}$. For each $i\in[n]$, count the number of alphabets in $E_i$ that appear before $a_i^{(-m_i)}$ in this ordering, and denote this number by $\phi_i(\mathbf{w})$.

\item Define the map $\phi_{\mathbf{c}}$ by
$$
\phi_{\mathbf{c}}(\mathbf{w})=\bigl(\phi_1(\mathbf{w}),\phi_2(\mathbf{w}),\dots,\phi_n(\mathbf{w})\bigr).
$$
\end{enumerate}

We illustrate this map with help of an example.

\begin{example} For $\mathbf{c}=(4,2,1)$, i.e., $\mathbf{m}=(2,1,1)$, the sets of eligible alphabets are
$$
E_1=\{a_2^{(0)},a_2^{(1)},a_3^{(0)}\},
$$
$$
E_2=\{a_1^{(-2)},a_1^{(-1)},a_1^{(0)},a_1^{(1)},a_3^{(0)}\},
$$
and
$$
E_3=\{a_1^{(-2)},a_1^{(-1)},a_1^{(0)},a_1^{(1)},a_2^{(-1)},a_2^{(0)}\}.
$$
The region $A$ in Figure~\ref{fig:shi_regions} corresponds to the Shi $\mathbf{c}$-ordered word 
$$
\mathbf{w}=a_1^{(-2)}a_3^{(-1)}a_1^{(-1)}a_2^{(-1)}a_3^{(0)}a_1^{(0)}a_2^{(0)}a_1^{(1)}a_2^{(1)}a_1^{(2)},
$$
In this ordering, there is no eligible alphabet appearing before $a_1^{(-2)}$. 
The eligible alphabets appearing before $a_2^{(-1)}$ are
$$
a_1^{(-2)},a_1^{(-1)},
$$
while one eligible alphabet $a_1^{(-2)}$ appears before $a_3^{(-1)}$. Therefore,
$$
\phi_{\mathbf c}(\mathbf w)=(0,2,1).
$$

The equivalence class of Shi $\mathbf{c}$-ordered words corresponding to the region $B$ in Figure~\ref{fig:shi_regions} consists of the four words $\mathbf{w}_1,\mathbf{w}_2,\mathbf{w}_3,$ and $\mathbf{w}_4$ given in Example~\ref{ex:word_shi_1}. For each of these words, the relative positions of the eligible alphabets with respect to $a_1^{(-2)}$, $a_2^{(-1)}$, and $a_3^{(-1)}$ remain unchanged. In particular, there are no eligible alphabets before $a_1^{(-2)}$, four eligible alphabets before $a_2^{(-1)}$, and six eligible alphabets before $a_3^{(-1)}$. Hence,
$$
\phi_{\mathbf{c}}(\mathbf{w}_1)
=\phi_{\mathbf{c}}(\mathbf{w}_2)
=\phi_{\mathbf{c}}(\mathbf{w}_3)
=\phi_{\mathbf{c}}(\mathbf{w}_4)
=(0,4,6).
$$
\end{example}

\noindent We now show that the map $\phi_{\mathbf c}$ is well defined.

\begin{lemma}\label{lem:ph1_1_well_defined}
    The map $\phi_{\mathbf c}$ is well defined.
\end{lemma}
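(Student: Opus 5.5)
The plan is to show that for every $\mathbf{w}\in\mathcal{W}_C(\mathbf{c})$ the tuple $\phi_{\mathbf c}(\mathbf w)$ really is a $\mathbf{c}$-building function. I will check this through the subset criterion of Lemma~\ref{equivalence} rather than by running the dynamic process. Since the map $\phi_{\mathbf c}$ is intended to descend to $\tilde\phi_{\mathbf c}$ on $\mathcal{W}_S(\mathbf c)$, I will also verify that it is constant on $\mathbf{c}$-Shi equivalence classes.

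\emph{A size count.} For every $j$, the sets $A_j$ and $A_j'$ each consist of the letters $a_j^{(t)}$ for $2m_j-\mathds{1}_I(j)$ consecutive values of $t$. By the definition of $m_j$, this number equals $c_j$ in both parity cases. Hence $|A_j|=|A_j'|=c_j$. Moreover, $E_i$ contains no letter with subscript $i$, and for each $j\neq i$ it contains exactly one of $A_j$, $A_j'$. Each $\phi_i(\mathbf w)$ is clearly a nonnegative integer.

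\emph{Subset condition.} Fix a nonempty $S\subseteq[n]$. Among the letters $a_s^{(-m_s)}$ with $s\in S$, let $i\in S$ be the index whose letter occurs first in $\mathbf w$. I then count the letters of $E_i$ that occur before $a_i^{(-m_i)}$.
\begin{itemize}
\item Take $k\in S$ with $k\neq i$. By the first condition of Definition~\ref{defn:gen_cat_m_ord_seqn}, every letter $a_k^{(t)}$ occurs after $a_k^{(-m_k)}$. By the choice of $i$, the letter $a_k^{(-m_k)}$ in turn occurs after $a_i^{(-m_i)}$. So no letter with subscript $k$ is counted.
\item Take $j\notin S$. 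The letters with subscript $j$ lying in $E_i$ form $A_j$ or $A_j'$, so at most $c_j$ of them are counted.
\end{itemize}
Therefore $\phi_i(\mathbf w)\le\sum_{j\notin S}c_j$. Lemma~\ref{equivalence} then gives $\phi_{\mathbf c}(\mathbf w)\in\mathcal{B}(\mathbf c)$.

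\emph{Invariance under Shi moves.} Consider a Shi move that swaps adjacent letters $a_i^{(m_i-\mathds{1}_I(i))}$ and $a_j^{(-m_j)}$ with $i<j$.
\begin{itemize}
\item For every $k\neq j$, the position of $a_k^{(-m_k)}$ relative to all other letters is unchanged, so $\phi_k$ is unchanged.
\item For $\phi_j$, the only comparison that changes is the one between $a_j^{(-m_j)}$ and $a_i^{(m_i-\mathds{1}_I(i))}$. Since $i<j$, the letters of subscript $i$ in $E_j$ form $A_i'$, and $A_i'$ omits $a_i^{(m_i-\mathds{1}_I(i))}$. So $\phi_j$ is also unchanged.
\end{itemize}
Hence $\phi_{\mathbf c}$ is constant on $\mathbf{c}$-Shi classes.

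\emph{Main point.} The only step needing care is the choice of $i$ as the earliest-starting index of $S$, together with the observation that $|A_j|=|A_j'|=c_j$. This is exactly what makes the asymmetric $\mathbf m$, $I$ bookkeeping match the capacities $c_j$. The remaining steps are routine.
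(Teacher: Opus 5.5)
Your proof is correct and rests on the same key count as the paper: only indices whose starting letter $a_j^{(-m_j)}$ occurs earlier can contribute letters before $a_i^{(-m_i)}$, and each such index contributes at most $|A_j|=|A_j'|=c_j$ eligible letters. The paper phrases this dynamically: it mobilizes firms in the order $\pi$ of the starting letters and bounds $\phi_{\pi(i)}(\mathbf w)\le\sum_{j<i}c_{\pi(j)}$. You instead verify the equivalent subset criterion of Lemma~\ref{equivalence} using the earliest-starting index of $S$; your explicit check $|A_j|=|A_j'|=c_j$ supplies a step the paper uses tacitly, and your Shi-move invariance is what the paper proves separately in the first half of Lemma~\ref{lem:phi_m_inject}.
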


\begin{proof}
Let $\mathbf w\in\mathcal{W}_{C}(\mathbf c)$ be a Catalan $\mathbf c$-ordered word. We show that $\phi_{\mathbf c}(\mathbf w)$ is a valid building function.

Suppose that, when $\mathbf w$ is read from left to right, the elements $a_i^{(-m_i)}$, $i\in[n]$, appear in the order determined by a permutation
$\pi\in S_n$. We claim that the firms can be successfully mobilized in the order $\pi(1),\pi(2),\ldots,\pi(n)$. For the first step, there are no previously mobilized firms. Hence, $\phi_{\pi(1)}(\mathbf w)=0$, so the firm labelled $\pi(1)$ can be successfully mobilized.

Now suppose that the first $(i-1)$ firms have already been mobilized. At the $i$-th step, the total accumulated height of the skyscraper built by the firms that have been mobilized is
$$
\sum_{j<i}c_{\pi(j)}.
$$
Moreover, the number of eligible elements for $\pi(i)$ that appear before $a_{\pi(i)}^{(-m_{\pi(i)})}$ in $\mathbf{w}$ is
$$
\phi_{\pi(i)}(\mathbf w)\leq \left|\left(\bigcup_{\substack{j<i\\\pi(j)<\pi(i)}}A_{\pi(j)}'\right)\cup \left(\bigcup_{\substack{j<i\\\pi(j)>\pi(i)}}A_{\pi(j)}\right)\right|=\sum_{j<i}c_{\pi(j)}.
$$
Hence, the firm labelled $\pi(i)$ can be successfully mobilized. By induction, the procedure yields a building sequence, proving that $\phi_{\mathbf c}$ is well defined.
\end{proof}

Before we state our next lemma, we assign a representative from the $\mathbf{c}$-Shi words to each region of the asymmetric $\mathbf{c}$-Shi arrangement.

\begin{definition}
We choose the \textit{representative} of each $\mathbf{c}$-Shi equivalence class to be the word in which, whenever $j>i$ and $a_i^{(m_i-\mathds{1}_I(i))}$ can be swapped with $a_j^{(-m_j)}$, the two alphabets must occur in the order $a_j^{(-m_j)}a_i^{(m_i-\mathds{1}_I(i))}$.
\end{definition}

\begin{example}
The region marked $B$ in Figure \ref{fig:shi_regions} corresponds to a $\mathbf{c}$-Shi equivalence class containing four Catalan $\mathbf{c}$-ordered words under the Shi move. Among these, $\mathbf{w}_4$ is the representative.
\end{example}

\begin{lemma}\label{lem:phi_m_inject}
Let $\mathbf{w}$ and $\mathbf{w'}$ be two Catalan $\mathbf{c}$-ordered word. Then $\mathbf{w}$ and $\mathbf{w}'$ are $\mathbf{c}$-Shi equivalent if and only if $\phi_{\mathbf{c}}(\mathbf{w})=\phi_{\mathbf{c}}(\mathbf{w}')$.
\end{lemma}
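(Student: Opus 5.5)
The plan is to prove the two directions separately. The forward direction is a local invariance check. The converse is a reconstruction argument on class representatives, backed by the count $|\mathcal W_S(\mathbf c)|=(1+C)^{n-1}=|\mathcal B(\mathbf c)|$ from Note~\ref{same size R, S} and Theorem~\ref{count_building-func}.

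\emph{Shi equivalence $\Rightarrow$ equal images.} It suffices to treat a single Shi move, which swaps adjacent letters $a_i^{(m_i-\mathds{1}_I(i))}$ and $a_j^{(-m_j)}$ with $i<j$.
\begin{itemize}
\item Take $k\neq j$. The value $\phi_k(\mathbf w)$ only records which letters of $E_k$ precede $a_k^{(-m_k)}$. Since $m_i-\mathds{1}_I(i)\ge 0>-m_i$, the letter $a_k^{(-m_k)}$ is not one of the two swapped letters. Swapping two adjacent letters does not change their positions relative to any third letter, so $\phi_k$ is unchanged.
\item Take $k=j$. The only letter whose position relative to $a_j^{(-m_j)}$ changes is $a_i^{(m_i-\mathds{1}_I(i))}$. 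Because $i<j$, the set $E_j$ contains $A_i'$, and $A_i'$ omits exactly the top letter $a_i^{(m_i-\mathds{1}_I(i))}$. So this letter is not counted, and $\phi_j$ is unchanged.
\end{itemize}
Hence $\phi_{\mathbf c}$ is constant on $\mathbf c$-Shi classes.

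\emph{Equal images $\Rightarrow$ Shi equivalence.} I would show that two \emph{representatives} $\mathbf w\neq\mathbf w'$ in the sense of the preceding Definition have different images. Let $\ell$ be the first position where they differ. Exactly as in the proof of Theorem~\ref{thm:word_and_dyck}, the rules of Definition~\ref{defn:gen_cat_m_ord_seqn} force the letter in position $\ell$ to be determined by the common prefix unless a start letter is involved. So one of the two differing letters, say the one in $\mathbf w$, is a start letter $a_j^{(-m_j)}$, and in $\mathbf w'$ that letter appears strictly later.

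Every letter lying in $\mathbf w'$ between positions $\ell$ and the position of $a_j^{(-m_j)}$ is an additional letter preceding $a_j^{(-m_j)}$ in $\mathbf w'$. If at least one of these letters lies in $E_j$, then $\phi_j(\mathbf w')>\phi_j(\mathbf w)$ and we are done. So the remaining case is when all of these intervening letters lie outside $E_j$. The letters outside $E_j$ are of three kinds:
\begin{itemize}
\item letters of index $j$ itself, which are impossible here since $a_j^{(-m_j)}$ is the first letter of index $j$;
\item top letters $a_i^{(m_i-\mathds{1}_I(i))}$ with $i<j$;
\item start letters $a_k^{(-m_k)}$ with $k>j$.
\end{itemize}

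For the top letters with $i<j$, the representative convention says a swappable pair must appear as $a_j^{(-m_j)}a_i^{(m_i-\mathds{1}_I(i))}$, so such a letter cannot sit immediately before $a_j^{(-m_j)}$ in $\mathbf w'$. One then pushes this along the whole intervening block. That rules out a block made only of such top letters.

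For the start letters with $k>j$, the roles reverse. In $\mathbf w'$ the letter $a_k^{(-m_k)}$ precedes $a_j^{(-m_j)}$, while in $\mathbf w$ it comes after. I would compare $\phi_k$ instead of $\phi_j$, since $A_j\subseteq E_k$ and $\phi_k$ may detect the letters of index $j$. More concretely, I would induct on the number of indices already placed, processing start letters in order of appearance and using $\phi$ to locate each one.

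I expect this case analysis — a block of non-eligible letters consisting of later start letters and earlier top letters — to be the main obstacle. If a clean case split fails, the fallback is a reconstruction argument. The first step is to show that a representative is determined by the order of its start letters together with their insertion points: the Catalan rules then fill in all non-start letters, as in $(\psi_{\mathbf c})^{-1}$. The second step is to show that $\phi_{\mathbf c}$ determines these insertion points up to exactly the ambiguities removed by the representative convention.

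Finally, injectivity of the induced map on $\mathcal W_S(\mathbf c)$, together with the equal cardinalities above, also yields that $\tilde\phi_{\mathbf c}$ is surjective onto $\mathcal B(\mathbf c)$.
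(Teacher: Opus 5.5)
Your forward direction is correct and is exactly the paper's argument. The converse, however, is not actually proved. The case you single out as the main obstacle is this: the letters of $\mathbf w'$ between position $\ell$ and $a_j^{(-m_j)}$ all lie outside $E_j$ but include start letters $a_k^{(-m_k)}$ with $k>j$. You leave that case to an unspecified induction or reconstruction, and your top-letter argument only disposes of blocks made entirely of top letters.

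Comparing $\phi_k$ for an arbitrary such $k$ does not suffice. Start letters $a_{k'}^{(-m_{k'})}$ with $j<k'<k$ that precede $a_k^{(-m_k)}$ in the block lie in $A'_{k'}\subseteq E_k$, and can offset the extra letter counted in $\mathbf w$. For instance, take $\mathbf c=(1,1,1)$ and the representatives $\mathbf w=a_1^{(-1)}a_3^{(-1)}a_2^{(-1)}a_1^{(0)}a_3^{(0)}a_2^{(0)}$ and $\mathbf w'=a_2^{(-1)}a_3^{(-1)}a_1^{(-1)}a_2^{(0)}a_3^{(0)}a_1^{(0)}$. They satisfy $\phi_1(\mathbf w)=\phi_1(\mathbf w')=0$ and $\phi_3(\mathbf w)=\phi_3(\mathbf w')=1$; only $\phi_2$ separates them. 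Note also that for $j<k$ the correct inclusion is $A'_j\subseteq E_k$, not $A_j\subseteq E_k$. It is $A'_j$, not $A_j$, that contains the letter $a_j^{(-m_j)}$ you want $\phi_k$ to detect.

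The missing idea, which is how the paper closes the argument, is to split only on the single letter $x$ at position $\ell$ of $\mathbf w'$; this letter is not of index $j$.

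\emph{Case $x\in E_j$.} Then $\phi_j(\mathbf w')>\phi_j(\mathbf w)$.

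\emph{Case $x=a_k^{(-m_k)}$ with $k>j$.} In $\mathbf w'$ only the common prefix precedes $x$. In $\mathbf w$ the prefix and $a_j^{(-m_j)}\in A'_j\subseteq E_k$ precede it, so $\phi_k(\mathbf w)>\phi_k(\mathbf w')$. Taking $k$ to be the \emph{first} letter of the block is what removes the interference above.

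\emph{Case $x=a_s^{(m_s-\mathds{1}_I(s))}$ with $s<j$.} Use that a representative never contains a top letter of index $s'$ immediately followed by a start letter of index $>s'$. Assume no block letter lies in $E_j$. Then the letter after a top letter of index $s'<j$ can be neither $a_j^{(-m_j)}$ nor a start letter of index $>j$, so it is again a top letter of index $<j$. Inductively, the letter immediately before $a_j^{(-m_j)}$ in $\mathbf w'$ is such a top letter, which is a forbidden adjacency.

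These three cases are exhaustive, so mixed blocks never have to be analysed.
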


\begin{proof}
Suppose first that $\mathbf{w}$ and $\mathbf{w}'$ are $\mathbf{c}$-Shi equivalent. Since $\mathbf{c}$-Shi equivalence is generated by single Shi moves, it suffices to show that $\phi_{\mathbf c}$ is invariant under each such move. Thus, suppose that $\mathbf{w}'$ is obtained from $\mathbf{w}$ by a single Shi move. We show that $\phi_{\mathbf c}(\mathbf w)=\phi_{\mathbf c}(\mathbf w')$.
The general case then follows by applying this argument successively to each Shi move. Suppose that a single Shi move swaps the alphabets $a_{j_1}^{(m_{j_1}-\mathds{1}_I(j_1))}$ and $a_{j_2}^{(-m_{j_2})}$, where $j_1<j_2$. Observe that this swap does not affect the contribution to $\phi_r$ for any $r\in [n]\setminus \{j_1,j_2\}$. Therefore, 
$$
\phi_r(\mathbf{w})=\phi_r(\mathbf{w}')
$$
for all $r\in [n]\setminus\{j_1,j_2\}$. 

It remains to consider the cases $r=j_1$ and $r=j_2$. Since $a_{j_2}^{(-m_{j_2})}$ does not contribute to either $\phi_{j_1}(\mathbf{w})$ or $\phi_{j_1}(\mathbf{w}')$, and $a_{j_1}^{(-m_{j_1})}$ does not contribute to either $\phi_{j_2}(\mathbf{w})$ or $\phi_{j_2}(\mathbf{w}')$, as $a_{j_2}^{(-m_{j_2})}$ is not an eligible alphabet for the index $j_1$, and $a_{j_1}^{(-m_{j_1})}$ is not an eligible alphabet for the index $j_2$, we again obtain
$$
\phi_r(\mathbf{w})=\phi_r(\mathbf{w}')
$$
for $r\in\{j_1,j_2\}$. Hence, 
$$
\phi_{\mathbf{c}}(\mathbf{w})=\phi_{\mathbf{c}}(\mathbf{w}'),
$$ 
proving the converse implication.

Conversely, suppose that $\phi_{\mathbf c}(\mathbf w)=\phi_{\mathbf c}(\mathbf w')$. We have already proved that if $\mathbf{w}'$ is obtained from $\mathbf{w}$ by a single Shi move, then $\phi_{\mathbf c}(\mathbf w)=\phi_{\mathbf c}(\mathbf w')$. Therefore we may assume that $\mathbf{w}$ and $\mathbf{w}'$ are representative of their respective region. If possible let $\mathbf{w}$ and $\mathbf{w}'$ are not $\mathbf{c}$-Shi equivalent. Let $j$ be the first position from the left where $\mathbf{w}$ and $\mathbf{w}'$ differ. Let the elements in the $j$-th positions of $\mathbf{w}$ and $\mathbf{w}'$ be $a_{i_1}^{(l_1)}$ and $a_{i_2}^{(l_2)}$, respectively. Since $j$ is the first position from the left where the equality does not hold, all positions before $j$ are identical in $\mathbf{w}$ and $\mathbf{w}'$.

We claim that $l_1=-m_{i_1}$ or $l_2=-m_{i_2}$. Suppose, to the contrary, that $l_1\neq-m_{i_1}$ and $l_2\neq-m_{i_2}$. By the first condition in the Definition \ref{defn:gen_cat_m_ord_seqn}, $a_{i_1}^{(l_1-1)}$ appears before $j$-th position of $\mathbf{w}$. Similarly, $a_{i_2}^{(l_2-1)}$ occurs before position $j$ in $\mathbf{w}'$. Since all positions before $j$ are identical in $\mathbf{w}$ and $\mathbf{w}'$, the relative order of $a_{i_1}^{(l_1-1)}$ and $a_{i_2}^{(l_2-1)}$ is the same in $\mathbf{w}$ and $\mathbf{w}'$. Hence, by the second condition in the Definition \ref{defn:gen_cat_m_ord_seqn}, the relative order of $a_{i_1}^{(l_1)}$ and $a_{i_2}^{(l_2)}$ is also the same in $\mathbf{w}$ and $\mathbf{w}'$, respectively. This contradicts the fact that the $j$-th letters of $\mathbf{w}$ and $\mathbf{w}'$ are different. Therefore, $l_1=-m_{i_1}$ or $l_2=-m_{i_2}$.

Without loss of generality, suppose that $l_1=-m_{i_1}$, i.e. the $j$-th letter of $\mathbf{w}$ is $a_{i_1}^{(-m_{i_1})}$. Since $\mathbf{w}$ and $\mathbf{w}'$ agree in all positions before $j$, and the $j$-th letter of $\mathbf{w}'$ is different from $a_{i_1}^{(-m_{i_1})}$, the letter $a_{i_1}^{(-m_{i_1})}$ appears later in $\mathbf{w}'$.

\noindent\textbf{Case 1:} Suppose that $i_1<i_2$. If $l_2\neq-m_{i_2}$, then $a_{i_2}^{(l_2)}\in A_{i_1}$ is an eligible alphabet for $i_1$. Since $a_{i_2}^{(l_2)}$ occurs in the $j$-th position of $\mathbf{w}'$ and after $a_{i_1}^{(-m_{i_1})}$ in $\mathbf{w}$, this alphabet is counted by $\phi_{i_1}(\mathbf{w}')$ but not by $\phi_{i_1}(\mathbf{w})$. Therefore, by the definition of $\phi_{i_1}$,
\[
\phi_{i_1}(\mathbf{w})<\phi_{i_1}(\mathbf{w}'),
\]
which contradicts $\phi_{\mathbf c}(\mathbf{w})=\phi_{\mathbf c}(\mathbf{w}')$.

If $l_2=-m_{i_2}$, then $a_{i_1}^{(-m_{i_1})}$ is an eligible alphabet for $i_2$. Since $a_{i_1}^{(-m_{i_1})}$ occurs in the $j$-th position of $\mathbf{w}$ and after $a_{i_2}^{(-m_{i_2})}$ in $\mathbf{w}'$, this alphabet is counted by $\phi_{i_2}(\mathbf{w})$ but not by $\phi_{i_2}(\mathbf{w}')$. Therefore, by the definition of $\phi_{i_2}$,
\[
\phi_{i_2}(\mathbf{w}')<\phi_{i_2}(\mathbf{w}),
\]
which contradicts $\phi_{\mathbf c}(\mathbf{w})=\phi_{\mathbf c}(\mathbf{w}')$.

\medskip
\noindent\textbf{Case 2:} Suppose that $i_1>i_2$. If $l_2\neq m_{i_2}-\mathds{1}_I(i_2)$, then $a_{i_2}^{(l_2)}\in A_{i_1}'$ is an eligible alphabet for $i_1$. Since $a_{i_2}^{(l_2)}$ occurs in the $j$-th position of $\mathbf{w}'$ and after $a_{i_1}^{(-m_{i_1})}$ in $\mathbf{w}$, this alphabet is counted by $\phi_{i_1}(\mathbf{w}')$ but not by $\phi_{i_1}(\mathbf{w})$. Therefore,
\[
\phi_{i_1}(\mathbf{w})<\phi_{i_1}(\mathbf{w}'),
\]
which contradicts $\phi_{\mathbf c}(\mathbf{w})=\phi_{\mathbf c}(\mathbf{w}')$.

If $l_2=m_{i_2}-\mathds{1}_I(i_2)$, then, since $\mathbf{w}'$ is a representative, the alphabet immediately following $a_{i_2}^{(l_2)}$ cannot be of the form $a_{r_1}^{(-m_{r_1})}$ with $r_1>i_2$, as this would allow a Shi move, contradicting the fact that $\mathbf{w}'$ is a representative. Moreover, any alphabet occurring between $a_{i_2}^{(l_2)}$ and $a_{i_1}^{(-m_{i_1})}$ in $\mathbf{w}'$ cannot be eligible for $i_1$; otherwise, we would have
$$
\phi_{i_1}(\mathbf{w})<\phi_{i_1}(\mathbf{w}'),
$$
contradicting $\phi_{\mathbf c}(\mathbf{w}) =\phi_{\mathbf c}(\mathbf{w}')$.

Therefore, the alphabet immediately following $a_{i_2}^{(l_2)}$ must be of the form $a_s^{(m_s-\mathds{1}_I(s))}$ for some $s<i_1$. Applying the same argument to this alphabet, the next alphabet must again be of the same form, with an index smaller than $i_1$. Repeating this process, we either reach an exhausted index, which gives a contradiction, or eventually reach an alphabet immediately preceding $a_{i_1}^{(-m_{i_1})}$ of the form $a_s^{(m_s-\mathds{1}_I(s))}$ with $s<i_1$. In the latter case, this alphabet could be swapped with $a_{i_1}^{(-m_{i_1})}$, contradicting the fact that $\mathbf{w}'$ is a representative.

\medskip

Thus both cases are impossible, and hence $\mathbf{w}$ and $\mathbf{w}'$ are $\mathbf{c}$-Shi equivalent.
\end{proof}

By Theorem \ref{count_building-func} and Note \ref{same size R, S} we have the number of Shi $\mathbf{c}$-ordered words is equal to the number of $\mathbf{c}$-building function, i.e., 
$$|\mathcal{W}_{S}(\mathbf{c})|=|\mathcal{B}(\mathbf{c})|=( 1 + C )^{n-1}.$$ 
Therefore, to prove that the map $\tilde\phi_{\mathbf{c}}:\mathcal{W}_{S}(\mathbf{c})\rightarrow\mathcal{B}(\mathbf{c})$ is a bijection, it suffices to show that $\tilde\phi_{\mathbf{c}}$ is injective.

\begin{theorem}\label{thm:word_and_parking}
    The map $\tilde{\phi}_{\mathbf{c}}$ is a bijection between $\mathcal{W}_{S}(\mathbf{c})$ and $\mathcal{B}(\mathbf{c})$.
\end{theorem}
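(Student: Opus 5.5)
The plan is to assemble the three ingredients already in place: well-definedness of $\tilde{\phi}_{\mathbf c}$, its injectivity, and the equality of cardinalities. By Lemma~\ref{lem:ph1_1_well_defined}, for every $\mathbf w\in\mathcal W_C(\mathbf c)$ the tuple $\phi_{\mathbf c}(\mathbf w)$ lies in $\mathcal B(\mathbf c)$. By the forward direction of Lemma~\ref{lem:phi_m_inject}, $\phi_{\mathbf c}$ is constant on $\mathbf c$-Shi equivalence classes. Hence the rule $\tilde{\phi}_{\mathbf c}([\mathbf w])=\phi_{\mathbf c}(\mathbf w)$ gives a well-defined map $\mathcal W_S(\mathbf c)\to\mathcal B(\mathbf c)$, independent of the chosen representative of the class.

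Injectivity then follows from the reverse direction of Lemma~\ref{lem:phi_m_inject}. Suppose $\tilde{\phi}_{\mathbf c}([\mathbf w])=\tilde{\phi}_{\mathbf c}([\mathbf w'])$. Then $\phi_{\mathbf c}(\mathbf w)=\phi_{\mathbf c}(\mathbf w')$, so $\mathbf w$ and $\mathbf w'$ are $\mathbf c$-Shi equivalent, and therefore $[\mathbf w]=[\mathbf w']$.

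Finally, for surjectivity we avoid an explicit inverse and use counting. By Note~\ref{same size R, S}, which relies on Theorem~\ref{thm:shi_region_and_words} and the characteristic polynomial computation for $\mathcal A^{\mathbf c}_{\mathrm{Shi}}$, we have $|\mathcal W_S(\mathbf c)|=(1+C)^{n-1}$. By Theorem~\ref{count_building-func}, $|\mathcal B(\mathbf c)|=(1+C)^{n-1}$ as well. An injective map between finite sets of equal size is a bijection, which proves the theorem.

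The main obstacle is not in this assembly but in the input it depends on: the reverse direction of Lemma~\ref{lem:phi_m_inject}. That argument reduces to comparing class representatives and analysing the first position where two words differ. The delicate part is the subcase in which the differing letter of $\mathbf w'$ is a top letter $a_{i_2}^{(m_{i_2}-\mathds{1}_I(i_2))}$ with $i_2<i_1$. There one must follow the chain of top letters up to $a_{i_1}^{(-m_{i_1})}$ and show that either some Shi move remains available or some eligible letter changes $\phi_{i_1}$. If I needed to strengthen that step, I would first verify carefully that every letter lying strictly between these two positions in $\mathbf w'$ is either eligible for $i_1$ or is a top letter of an index smaller than $i_1$. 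The reason is that non-top letters of any index other than $i_1$ lie in $A'_s$ or $A_s$, and hence in $E_{i_1}$. Given that dichotomy, the chain argument closes: the letter immediately preceding $a_{i_1}^{(-m_{i_1})}$ would be a top letter of a smaller index, which permits a Shi move and contradicts the choice of representative.
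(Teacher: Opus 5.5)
Your assembly is correct and is exactly the paper's argument: $\tilde\phi_{\mathbf c}$ is well defined by Lemma~\ref{lem:ph1_1_well_defined} together with the forward direction of Lemma~\ref{lem:phi_m_inject}, injective by its reverse direction, and therefore bijective because $|\mathcal W_S(\mathbf c)|=|\mathcal B(\mathbf c)|=(1+C)^{n-1}$. One correction to your side remark on that lemma: the proposed dichotomy is false, since a bottom letter $a_s^{(-m_s)}$ with $s>i_1$ is not a top letter yet lies outside $A_s$ and hence outside $E_{i_1}$, so such letters must be excluded (as the paper does) by the representative condition rather than by eligibility---the first of them in the chain would immediately follow a top letter $a_r^{(m_r-\mathds{1}_I(r))}$ with $r<i_1<s$, which would permit a Shi move.
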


\begin{proof}
By Lemmas \ref{lem:ph1_1_well_defined} and \ref{lem:phi_m_inject}, the map $\tilde{\phi}_{\mathbf{c}}$ is well defined and injective, respectively. Hence, it is bijective.
\end{proof}

\noindent We now define the inverse of the map $\tilde{\phi}_{\mathbf{c}}$. 
\medskip

\noindent \textbf{The map $\left(\tilde{\phi}_{\mathbf{c}}\right)^{-1}: \mathcal{B}({\mathbf{c}}) \rightarrow \mathcal{W}_{S}{(\mathbf{c})}$:}  Let $\mathbf{i}=(i_1,\dots,i_n)$ be a given tuple.

\begin{enumerate}
    \item Construct $\pi\in S_n$ such that, for $j_1<j_2$, either $i_{\pi(j_1)}<i_{\pi(j_2)}$, or $i_{\pi(j_1)}=i_{\pi(j_2)}$ and $\pi(j_1)<\pi(j_2)$. Observe that $\pi$ is uniquely determined by the tuple $\mathbf{i}$. 

    \item We now recursively construct the ordering of the alphabets. Begin with: 
    $$
    a_{\pi(1)}^{(-m_{\pi(1)})} a_{\pi(1)}^{(-m_{\pi(1)}+1)}\dots a_{\pi(1)}^{(m_{\pi(1)}-\mathds{1}_I(\pi(1)))}.
    $$

    \item Suppose that the alphabets corresponding to $\pi(1),\pi(2),\dots,\pi(j-1)$ have already been placed. Place the alphabet $a_{\pi(j)}^{(-m_{\pi(j)})}$ immediately after the $i_{\pi(j)}$-th eligible alphabet for $\pi(j)$. Then place the remaining alphabets $a_{\pi(j)}^{(-m_{\pi(j)}+1)},\dots,a_{\pi(j)}^{(m_{\pi(j)}-\mathds{1}_I(\pi(1)))}$ so that it satisfies Definition \ref{defn:gen_cat_m_ord_seqn}.
\end{enumerate}

Let us illustrate the construction of the inverse map with an example.

\begin{example}
    For the tuple $\mathbf{i}=(0,2,1)$, with $\mathbf{c}=(4,2,1)$, the algorithm proceeds as follows.

\begin{itemize}
    \item Since $i_1=0$ and $i_3=1<i_2=4$, we obtain the permutation $\pi=132$.

    \item Place the alphabets corresponding to $\pi(1)=1$:
    $$
    a_1^{(-2)}a_1^{(-1)}a_1^{(0)}a_1^{(1)}a_1^{(2)}.
    $$

    \item Now we place the alphabets corresponding to $\pi(2)=3$. Since $i_3=1$, place $a_3^{(-1)}$ immediately after the $1$-st eligible alphabet for $3$. Then place
    $a_3^{(0)},a_3^{(1)}$
    so that Definition~\ref{defn:gen_cat_m_ord_seqn} is satisfied. This gives
    $$
    a_1^{(-2)}a_3^{(-1)}a_1^{(-1)}a_3^{(0)}
    a_1^{(0)}a_1^{(1)}a_1^{(2)}.
    $$

    \item Finally we place the alphabets corresponding to $\pi(3)=2$. Since $i_2=2$, place $a_2^{(-1)}$ immediately after the two eligible alphabets for $2$, namely after $a_1^{(-1)}$. Then place $a_2^{(0)},a_2^{(1)}$ so that Definition~\ref{defn:gen_cat_m_ord_seqn} is satisfied. The resulting word is
   $$a_1^{(-2)}a_3^{(-1)}a_1^{(-1)}a_2^{(-1)}a_3^{(0)}a_1^{(0)}a_2^{(0)}a_1^{(1)}a_2^{(1)}a_1^{(2)},$$
\end{itemize}
\end{example}

\noindent Here we give a rough sketch of the proof that $\left(\tilde{\phi}_{\mathbf{c}}\right)^{-1}$ is indeed the inverse of $\tilde{\phi}_{\mathbf{c}}$.

% \blue{In the algorithm, our aim is to place the alphabet corresponding to $i$ first, where the $i$-th firm has the smallest initiation threshold, namely $0$. We then apply the algorithm to the remaining firms in increasing order of their initiation thresholds. If several firms, say $i_1<i_2<\cdots<i_l$, have the same initiation threshold, we process them in the order $i_1,i_2,\ldots,i_l$. This tie-breaking rule is necessary to ensure that the algorithm is giving us the inverse map. Therefore, it is important to consider the permutation $\pi$ with this ordering convention.}
\begin{proof}[Sketch of the proof that $\tilde{\phi}_{\mathbf{c}}^{-1}$ is inverse of $\tilde{\phi}_{\mathbf{c}}$]
The construction proceeds recursively in the order $\pi(1),\ldots,\pi(n)$. At the $j$-th step, we insert the alphabet corresponding to $\pi(j)$ at the position prescribed by $i_{\pi(j)}$. This can be done because the number of alphabets in the current subword is $\sum_{r=1}^{j-1}c_{\pi(r)}$, and by the property of the $\mathbf{c}$-building function,
$$
i_{\pi(j)}\leq \sum_{r=1}^{j-1}c_{\pi(r)}.
$$
We then place its remaining alphabets so that Definition~\ref{defn:gen_cat_m_ord_seqn} is satisfied. This proves that the inverse map is well defined.

When $i_{\pi(j-1)}\neq i_{\pi(j)}$, all the new alphabets are placed after the alphabets 
$$
a_{\pi(1)}^{(-m_{\pi(1)})},\ldots, a_{\pi(j-1)}^{(-m_{\pi(j-1)})},
$$
and therefore the previously determined values $\phi_{\pi(1)}(\mathbf w),\ldots, \phi_{\pi(j-1)}(\mathbf w)$ remain unchanged.

When $i_{\pi(j-1)}=i_{\pi(j)}$, then $a_{\pi(j)}^{(-m_{\pi(j)})}$ is placed before
$a_{\pi(j-1)}^{(-m_{\pi(j-1)})}$ and after
$$
a_{\pi(1)}^{(-m_{\pi(1)})},\ldots,
a_{\pi(j-2)}^{(-m_{\pi(j-2)})}.
$$
The remaining alphabets corresponding to $\pi(j)$ are then placed after
$$
a_{\pi(1)}^{(-m_{\pi(1)})},\ldots, a_{\pi(j-1)}^{(-m_{\pi(j-1)})}.
$$
The previously determined values $\phi_{\pi(1)}(\mathbf w),\allowbreak\ldots,\phi_{\pi(j-1)}(\mathbf w)$ remain unchanged, since $a_{\pi(j)}^{(-m_{\pi(j)})}$ is not an eligible alphabet for $i_{\pi(j-1)}$.

Thus, by induction, after all $n$ steps we obtain a word $\mathbf w$ satisfying
$$
\phi_j(\mathbf w)=i_j\qquad\text{for all }j\in[n].
$$
Hence,
$$
\phi_{\mathbf c}(\mathbf w)=\mathbf i,
$$
and, since $\tilde{\phi}_{\mathbf c}$ is bijective, the constructed map
is precisely $(\tilde{\phi}_{\mathbf c})^{-1}$.

We now establish the second main result of this paper, that is, an explicit bijection between the set of regions of the asymmetric $\mathbf{c}$-Shi arrangement, $\mathcal{R}_{S}(\mathbf{c})$, and the set of $\mathbf{c}$-building functions, $\mathcal{B}(\mathbf{c})$.
\end{proof}
% \begin{theorem}\label{main_shi}
% The map $\tilde{\phi}_{\mathbf{c}}\circ\tilde{\varphi}_{\mathbf{c}}$ is a bijection between $\mathcal{R}_{S}(\mathbf{c})$ and $ \mathcal{B}(\mathbf{c})$.
% \end{theorem}

\begin{proof}[\textbf{Proof of Theorem \ref{main_shi0}}]
   By Theorems \ref{thm:shi_region_and_words} and \ref{thm:word_and_parking}, both $\tilde{\varphi}_{\mathbf{c}}$ and $\tilde{\phi}_{\mathbf{c}}$ are bijections. Therefore, their composition $\tilde{\phi}_{\mathbf{c}}\circ\tilde{\varphi}_{\mathbf{c}}$ is also a bijection between $\mathcal{R}_{S}(\mathbf{c})$ and $ \mathcal{B}(\mathbf{c})$. We provide a commutative diagram for more clarity.
\begin{figure}[H]
\begin{center}
    \begin{tikzpicture}
    [>=Stealth,node distance=2.5cm and 2.5cm]

\node (R) {$\mathcal{R}_{S}(\mathbf{c})$};
\node[right=of R] (A) {$\mathcal{W}_{S}(\mathbf{c})$};
\node[below=of A] (D) {$\mathcal{B}(\mathbf{c})$};

% Horizontal arrow
\draw[->] (R) -- node[above] {$\tilde{\varphi}_{\mathbf{c}}$} (A);

% Vertical arrow
\draw[->] (A) -- node[right] {$\tilde{\phi}_{\mathbf{c}}$} (D);

% Diagonal arrow
%\draw[->] (R) -- (D);
\draw[->] (R) -- node[below,left] {$\tilde{\phi}_{\mathbf{c}}\circ\tilde{\varphi}_{\mathbf{c}}$} (D);

% Composition label
%\node[left=1.2cm of D] {$\psi_m\circ\phi_m$};

\end{tikzpicture}
\end{center}
\caption{Commutative diagram of bijection}
\end{figure}
\end{proof}

\section{Conclusion and Open Issues}
In this paper, we developed a combinatorial framework for studying the regions of the asymmetric $\mathbf c$-Catalan and Shi arrangement. We introduced two new combinatorial objects, namely $\mathbf{c}$-labelled Dyck paths and $\mathbf{c}$-building functions, and established explicit bijections between these objects and the regions of the corresponding arrangements. Our results generalize several classical bijections, including Stanley's bijection between parking functions and Shi arrangements, using entirely different combinatorial constructions. Although these bijections provide a complete combinatorial interpretation of the region counts for the type A families considered here, they also suggest several interesting directions for future research.

% Some of the questions arising from this work are listed below.

\begin{question}
Let $\mathbf{m}=(m_{i,j})_{i,j\in[n]}\in\mathbb{Z}_{>0}^{n\times n}$ be a symmetric array satisfying the triangle inequality
$m_{i,j}\leq m_{i,k}+m_{k,j}$ for all $i,j,k\in[n]$.

\begin{definition}\label{def-array-cat-shi}
The following two hyperplane arrangements in $\mathbb{R}^n$ are defined by:
\begin{itemize}
    \item The \textit{$\mathbf{m}$-array Catalan arrangement}:
    $$\mathcal{A}^{\mathbf{m}}_{\mathrm{Cat}}
    =
    \{x_i-x_j=s\mid s\in[-m_{i,j},m_{i,j}],\ 1\leq i<j\leq n\}.$$

    \item The \textit{$\mathbf{m}$-array Shi arrangement}:
    $$\mathcal{A}^{\mathbf{m}}_{\mathrm{Shi}}
    =
    \{x_i-x_j=s\mid s\in[-m_{i,j}+1,m_{i,j}],\ 1\leq i<j\leq n\}.$$
\end{itemize}
\end{definition}

For $\sigma\in S_n$ with $\sigma(1)=1$, define
$$
S_{\sigma}^{\mathbf{m}}
=
\sum_{i=1}^{n-1}m_{\sigma(i),\sigma(i+1)}
+
m_{\sigma(1),\sigma(n)},
$$
and let $p_\sigma$ denote the number of descents of $\sigma$.

\begin{theorem}\label{thm:gen_cat_shi_char_func}
The characteristic polynomials of the above arrangements are given by:
\begin{itemize}
    \item For the $\mathbf{m}$-array Catalan arrangement,
    $$
    \chi(\mathcal{A}^{\mathbf{m}}_{\mathrm{Cat}},q)
    =
    \frac{q}{(n-1)!}
    \sum_{\substack{\sigma\in S_n\\ \sigma(1)=1}}
    \prod_{i=1}^{n-1}
    \bigl(q-S_{\sigma}^{\mathbf{m}}-i\bigr).
    $$

    \item For the $\mathbf{m}$-array Shi arrangement,
    $$
    \chi(\mathcal{A}^{\mathbf{m}}_{\mathrm{Shi}},q)
    =
    q
    \sum_{\substack{\sigma\in S_n\\ \sigma(1)=1}}
    \binom{q-S_{\sigma}^{\mathbf{m}}+p_{\sigma}}{n-1}.
    $$
\end{itemize}
\end{theorem}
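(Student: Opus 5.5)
The plan is to repeat the finite field computations used earlier for $\mathcal{A}^{\mathbf c}_{\mathrm{Cat}}$ and $\mathcal{A}^{\mathbf c}_{\mathrm{Shi}}$, now with pair-dependent gaps. Take $q$ a large prime. Then $\chi(\mathcal{A},q)$ counts the points $(\alpha_1,\dots,\alpha_n)\in\mathbb{F}_q^n$ avoiding every hyperplane.

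For the Catalan case, a point is admissible exactly when, for all $i\neq j$, the cyclic distance between $\alpha_i$ and $\alpha_j$ exceeds $m_{i,j}$.

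\begin{enumerate}
\item Place $\alpha_1,\dots,\alpha_n$ on the circle $\mathbb{F}_q$ and read off their cyclic order $\sigma$, normalized so that $\sigma(1)=1$.
\item Impose the constraint only on cyclically consecutive pairs. Between $\alpha_{\sigma(r)}$ and $\alpha_{\sigma(r+1)}$ we require $m_{\sigma(r),\sigma(r+1)}+a_r$ blank positions with $a_r\ge 0$. The closing gap from $\alpha_{\sigma(n)}$ back to $\alpha_{\sigma(1)}$ gets $m_{\sigma(1),\sigma(n)}+a_n$.
\item Check that the consecutive constraints imply the non-consecutive ones. This is the one genuinely new ingredient, and the triangle inequality supplies it. If $\alpha_{\sigma(k)}$ lies on the arc between $\alpha_{\sigma(r)}$ and $\alpha_{\sigma(s)}$, then the arc length is at least $(m_{\sigma(r),\sigma(k)}+1)+(m_{\sigma(k),\sigma(s)}+1)$. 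By induction on the number of intermediate points, this is at least $m_{\sigma(r),\sigma(s)}+2$, which exceeds what is needed. The same bound applies to the complementary arc, so both cyclic directions are fine.
\item Count. The gaps sum to $q-n$, so $\sum_r a_r=q-n-S^{\mathbf m}_\sigma$. This has $\binom{q-S^{\mathbf m}_\sigma-1}{n-1}$ nonnegative solutions.
\item Multiply by $q$ for the position of $\alpha_1$ and sum over $\sigma$ with $\sigma(1)=1$. Writing the binomial as $\prod_{i=1}^{n-1}(q-S^{\mathbf m}_\sigma-i)/(n-1)!$ gives the first formula. Unlike the uniform case, $S^{\mathbf m}_\sigma$ genuinely depends on $\sigma$, so the sum does not collapse.
\end{enumerate}

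For the Shi case, the forbidden set for $i<j$ is $x_i-x_j\in[-m_{i,j}+1,m_{i,j}]$. In the finite field this means either $\alpha_i-\alpha_j>m_{i,j}$ going one way around the circle, or $\alpha_j-\alpha_i>m_{i,j}-1$ going the other way. The procedure is as follows.

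\begin{enumerate}
\item Record the cyclic order $\sigma$ with $\sigma(1)=1$, exactly as before.
\item Let $\alpha_{\sigma(r+1)}$ follow $\alpha_{\sigma(r)}$ clockwise. The required gap is $m_{\sigma(r),\sigma(r+1)}$ or $m_{\sigma(r),\sigma(r+1)}-1$, depending on whether the indices ascend or descend; exactly one of the two directions is relevant. The closing gap is treated similarly.
\item Sum the gaps. Each descent of $\sigma$ saves one position, while the closing step returns to $\sigma(1)=1$ and so contributes one further saving. This matches the $+(1+p_\sigma)$ bookkeeping in the $\mathbf c$-Shi computation, giving $\sum a_r=q-n-S^{\mathbf m}_\sigma+p_\sigma+1$.
\item The number of solutions is then $\binom{q-S^{\mathbf m}_\sigma+p_\sigma}{n-1}$. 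Multiply by $q$ and sum over $\sigma$.
\end{enumerate}

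The main obstacle is again sufficiency of the consecutive constraints for non-adjacent pairs, now with the asymmetric $\pm1$ corrections. For an intermediate point, the arc from $\alpha_{\sigma(r)}$ to $\alpha_{\sigma(s)}$ has length at least $m_{\sigma(r),\sigma(k)}+m_{\sigma(k),\sigma(s)}+2-2=m_{\sigma(r),\sigma(k)}+m_{\sigma(k),\sigma(s)}$ in the worst case, where both consecutive pairs are descents and each loses one. By the triangle inequality this is at least $m_{\sigma(r),\sigma(s)}$, which suffices for the weaker of the two requirements. A worry remains when the pair $(\sigma(r),\sigma(s))$ itself needs the stronger requirement $m_{\sigma(r),\sigma(s)}+1$. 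The idea for that case is that the order $\sigma(r)<\sigma(s)$ forces at least one ascent among the intermediate consecutive pairs. That ascent contributes no loss, so the total is at least $m_{\sigma(r),\sigma(s)}+1$. I would verify this parity and ascent argument carefully, including on the wrap-around arc. Everything else is the routine weak-composition count already used earlier in the paper.
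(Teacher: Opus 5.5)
Your proposal is correct and follows the same finite-field cyclic-gap count the paper uses for its $\mathbf{c}$-Catalan and $\mathbf{c}$-Shi theorems; the paper states this array version without any proof. Your triangle-inequality induction, together with the Shi observation that a chain of consecutive points from a smaller label to a larger one must contain an ascent, supplies the sufficiency step the paper leaves implicit, and it already works as written on every arc, including the wrap-around one, once you fix the orientation so that descents (not ascents) are the steps that save a position.
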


Find combinatorial interpretations for the regions of the
$\mathbf{m}$-array Catalan and $\mathbf{m}$-array Shi arrangements.
\end{question}

\begin{question}
In this paper, we have focused on hyperplane arrangements of type $A$. It is natural to ask whether analogous results hold for arrangements of types $B$, $C$, and $D$. In particular, it would be interesting to determine their characteristic polynomials and identify explicit combinatorial models for their regions. A natural direction for future work is therefore to extend the bijective approach developed here to asymmetric versions of the type $B$, $C$, and $D$ arrangements.

% Deformations of these arrangements can be defined systematically using standard conventions from the literature. For instance, using a parameter $\mathbf{c}=(c_1,c_2,\ldots,c_n)\in\mathbb{Z}_{>0}^n$, define
% $$
% m_i=\begin{cases}
% \dfrac{c_i+1}{2}, & \text{if $c_i$ is odd},\\[6pt]
% \dfrac{c_i}{2}, & \text{if $c_i$ is even},
% \end{cases}
% $$
% and let $I=\{i\in[n]\mid c_i\text{ is odd}\}$. the asymmetric $\mathbf{c}$-Catalan arrangement  for the classical signed reflection groups in $\mathbb{R}^n$ are defined as follows:
% \begin{itemize}
%     \item The asymmetric $\mathbf{c}$-Catalan arrangement of  type B in $\mathbb{R}^n$ is given by
%     \begin{align*}
% &\{x_i \in [-m_i-m_j+\mathds{1}_{I}(i),\,m_i+m_j]
%    \mid i\in[n]\}\\
% &\qquad{}\cup
% \{x_i+x_j \in [-m_i-m_j+\mathds{1}_{I}(i),\,
% m_i+m_j-\mathds{1}_{I}(j)]\}
% \cup \mathcal{A}^{\mathbf{c}}_{\mathrm{Cat}}.
% \end{align*}

%     \item The asymmetric $\mathbf{c}$-Catalan arrangement of  type C in $\mathbb{R}^n$ is given by
%     \begin{align*}
% &\{2x_i \in [-m_i-m_j+\mathds{1}_{I}(i),\,m_i+m_j]
%    \mid i\in[n]\}\\
% &\qquad{}\cup
% \{x_i+x_j \in [-m_i-m_j+\mathds{1}_{I}(i),\,
% m_i+m_j-\mathds{1}_{I}(j)]\}
% \cup \mathcal{A}^{\mathbf{c}}_{\mathrm{Cat}}.
% \end{align*}

%     \item The asymmetric $\mathbf{c}$- Catalan arrangement of  type D in $\mathbb{R}^n$ is given by
%     $$ \{x_i +x_j \in [ -m_i-m_j+\mathds{1}_{I}(i),\, m_i+m_j-\mathds{1}_{I}(j)] \}\cup \mathcal{A}^{\mathbf{c}}_{\mathrm{Cat}}.$$
% \end{itemize}
% Analogous definitions can be given for the   asymmetric $\mathbf{c}$-Shi arrangement. 
\end{question}

\section{Acknowledgements}

This project was initiated during the NCM Workshop on Topics in Geometric Combinatorics, held at CMI, Chennai, in July 2025. The authors are grateful to the organizers for providing a productive and inspiring research environment. They would especially like to thank Dr. Priyavrat Deshpande, one of the organizers of the workshop, for his valuable discussions and insightful suggestions. The authors also sincerely thank Dr. Hiranya Kishor Dey for the initial discussions that contributed to this work. We are thankful to Dr. Umesh Shankar for  observing that the idea used for the hyperplane arrangement in an earlier version of this paper extends to the more general class of hyperplane arrangements considered in the present paper, and for bringing this observation to our attention.

We acknowledge the use of generative AI tools strictly to improve readability and grammatical flow. All conceptual ideas, computations, and mathematical proofs were developed solely by the authors.

\bibliographystyle{plain}
\bibliography{shu2}

@misc{margoliash2010matrix,
  author    = {Margoliash, Jonathan},
  title     = {The {{M}}atrix--{{T}}ree {{T}}heorem for directed graphs},
  year      = {2010},
  note      = {Expository note, Stanford University},
  url       = {https://math.stanford.edu/~jmarg/matrix_tree.pdf}
}

@book{crapo1970foundations,
  author    = {Crapo, Henry H. and Rota, Gian-Carlo},
  title     = {On the Foundations of Combinatorial Theory: Combinatorial Geometries},
  publisher = {The MIT Press},
  address   = {Cambridge, MA},
  year      = {1970}
}

@article{athanasiadis1996characteristic,
  author    = {Athanasiadis, Christos A.},
  title     = {Characteristic polynomials of subspace arrangements and finite fields},
  journal   = {Advances in Mathematics},
  volume    = {122},
  number    = {2},
  pages     = {193--233},
  year      = {1996},
  doi       = {10.1006/aima.1996.0059}
}

@book{zaslavsky1975facing,
  author    = {Zaslavsky, Thomas},
  title     = {Facing Up to Arrangements: Face-Count Formulas for Partitions of Space by Hyperplanes},
  series    = {Memoirs of the American Mathematical Society},
  volume    = {154},
  publisher = {American Mathematical Society},
  address   = {Providence, RI},
  year      = {1975},
  doi       = {10.1090/memo/0154}
}

@incollection{stanley1998hyperplane,
  author    = {Stanley, Richard P.},
  title     = {Hyperplane arrangements, parking functions and tree inversions},
  booktitle = {Mathematical Essays in Honor of Gian-Carlo Rota},
  editor    = {Sagbi, Bruce E. and Yan, Catherine H.},
  series    = {Progress in Mathematics},
  volume    = {161},
  publisher = {Birkh{\"a}user},
  address   = {Boston, MA},
  pages     = {359--375},
  year      = {1998},
  doi       = {10.1007/978-1-4612-2022-0_18}
}

@article{bernardi2018deformations,
  author    = {Bernardi, Olivier},
  title     = {Deformations of the braid arrangement and trees},
  journal   = {Advances in Mathematics},
  volume    = {335},
  pages     = {466--518},
  year      = {2018},
  doi       = {10.1016/j.aim.2018.07.014}
}

@article{deshpande2025sketches,
  author    = {Deshpande, Priyavrat and Menon, Krishna},
  title     = {Sketches, moves and partitions: counting regions of {{C}}atalan deformations of reflection arrangements},
  journal   = {The Electronic Journal of Combinatorics},
  volume    = {32},
  number    = {1},
  pages     = {P1.46},
  year      = {2025},
  doi       = {10.37236/13352}
}

@book{orlik1992arrangements,
  author    = {Orlik, Peter and Terao, Hiroaki},
  title     = {Arrangements of Hyperplanes},
  series    = {Grundlehren der mathematischen Wissenschaften},
  volume    = {300},
  publisher = {Springer-Verlag},
  address   = {Berlin, Heidelberg},
  year      = {1992},
  doi       = {10.1007/978-3-662-02772-1}
}

@book{humphreys1990reflection,
  author    = {Humphreys, James E.},
  title     = {Reflection Groups and {{C}}oxeter Groups},
  series    = {Cambridge Studies in Advanced Mathematics},
  volume    = {29},
  publisher = {Cambridge University Press},
  address   = {Cambridge},
  year      = {1990},
  doi       = {10.1017/CBO9780511623646}
}

@article{stanley1996hyperplane,
  author    = {Stanley, Richard P.},
  title     = {Hyperplane arrangements, interval orders, and trees},
  journal   = {Proceedings of the National Academy of Sciences of the United States of America},
  volume    = {93},
  number    = {6},
  pages     = {2620--2625},
  year      = {1996},
  doi       = {10.1073/pnas.93.6.2620}
}

@book{shi1986kazhdan,
  author    = {Shi, Jian-Yi},
  title     = {The {{K}}azhdan--{{L}}usztig Cells in Certain Affine {{W}}eyl Groups},
  series    = {Lecture Notes in Mathematics},
  volume    = {1179},
  publisher = {Springer-Verlag},
  address   = {Berlin, Heidelberg},
  year      = {1986},
  doi       = {10.1007/BFb0074968}
}

@article{Alex-Boris,
  author    = {Postnikov, Alexander and Shapiro, Boris},
  title     = {Trees, parking functions, syzygies, and deformations of monomial ideals},
  journal   = {Transactions of the American Mathematical Society},
  volume    = {356},
  number    = {8},
  pages     = {3109--3142},
  year      = {2004},
  doi       = {10.1090/S0002-9947-04-03541-6}
}

@article{postnikov2000deformations,
  author    = {Postnikov, Alexander and Stanley, Richard P.},
  title     = {Deformations of {{C}}oxeter hyperplane arrangements},
  journal   = {Journal of Combinatorial Theory, Series A},
  volume    = {91},
  number    = {1--2},
  pages     = {544--597},
  year      = {2000},
  doi       = {10.1006/jcta.2000.3106}
}

@article{stanley1973acyclic,
  author    = {Stanley, Richard P.},
  title     = {Acyclic orientations of graphs},
  journal   = {Discrete Mathematics},
  volume    = {5},
  number    = {2},
  pages     = {171--178},
  year      = {1973},
  doi       = {10.1016/0012-365X(73)90108-8}
}

@article{bernardi2026bijectivity,
  author    = {Bernardi, Olivier and Goregaokar, Neha},
  title     = {Bijectivity of a generalized {{P}}ak--{{S}}tanley labeling},
  journal   = {arXiv preprint arXiv:2603.24886},
  year      = {2026},
  eprint    = {2603.24886},
  archivePrefix = {arXiv},
  primaryClass = {math.CO}
}

@incollection{williams2022oberwolfach,
  author    = {Williams, Nathan},
  title     = {Problem Session},
  booktitle = {Enumerative Combinatorics},
  series    = {Oberwolfach Reports},
  volume    = {19},
  number    = {4},
  publisher = {European Mathematical Society},
  pages     = {3101--3104},
  year      = {2022},
  doi       = {10.4171/OWR/2022/53}
}

@inproceedings{bernardi2025deformations,
  author    = {Bernardi, Olivier and Douvropoulos, Theo},
  title     = {Deformations of restricted reflection arrangements},
  booktitle = {Proceedings of the 37th International Conference on Formal Power Series and Algebraic Combinatorics (FPSAC 2025)},
  series    = {S{\'e}minaire Lotharingien de Combinatoire},
  volume    = {93B},
  year      = {2025}
}

@article{duarte2021pak,
  title={Pak-Stanley labeling of the m-Catalan hyperplane arrangement},
  author={Duarte, Rui and de Oliveira, Ant{\'o}nio Guedes},
  journal={Advances in Mathematics},
  publisher={Elsevier},
  volume={387},
  pages={107827},
  year={2021}
  
}

@article{fu2021bijections,
  title={Bijections on r-Shi and r-Catalan arrangements},
  author={Fu, Houshan and Wang, Suijie and Zhu, Weijin},
  journal={Advances in Applied Mathematics},
  publisher={Elsevier},
  volume={129},
  pages={102207},
  year={2021}
  
}

\end{document}